\theoremstyle{plain}% default
\newtheorem*{theorem*}{Theorem}
\newtheorem*{remark*}{Remark}
\newtheorem*{example*}{Example}
\newtheorem{lemma}{Lemma}[subsection]
\newtheorem{proposition}[lemma]{Proposition}
\newtheorem{corollary}[lemma]{Corollary}
\newtheorem{theorem}[lemma]{Theorem}
\newtheorem*{conjecture*}{Conjecture}
\newtheorem{sublemma}[lemma]{Sublemma}
\theoremstyle{definition}
\newtheorem{definition}[lemma]{Definition}
\theoremstyle{remark}
\newtheorem{remark}[lemma]{Remark}
\newtheorem{notation}[lemma]{Notation}
 \newcommand{\idealI}{\mathfrak{I}}
\newcommand{\Hom}{\operatorname{Hom}}
\newcommand{\triv}{{\mathbbm 1}}
\newcommand{\id}{\operatorname{Id}}
\renewcommand{\Im}{\operatorname{Im}}
\newcommand{\Ker}{\operatorname{Ker}}
\newcommand{\co}{{\it O}}
\newcommand{\End}{\operatorname{End}}
\newcommand{\bC}{{\mathbb C}}
\newcommand{\bZ}{{\mathbb Z}}
\newcommand{\Del}{{\Delta}}
\newcommand{\lam}{{\lambda}}
\newcommand{\fh}{{\mathfrak{h}}}
\newcommand{\T}{\tau}
\newcommand{\gl}{{\mathfrak{gl}}}
\newcommand{\abs}[1]{\left|{#1}\right|}
\newcommand{\Dab}{\underline{Rep}^{ab}(S_{\nu})}
\newcommand{\InnaA}[1]{#1}
\newcommand{\InnaB}[1]{#1}
\newcommand{\InnaC}[1]{#1}
\newcommand{\InnaD}[1]{#1}
\newcommand{\InnaE}[1]{#1}
\newcommand{\InnaF}[1]{#1}
\newcommand{\InnaG}[1]{#1}
\def\quotient#1#2{%
    \raise1ex\hbox{$#1$}\Big/\lower1ex\hbox{$#2$}%
}
\begin{document}

\date{\today}
\title{Schur-Weyl duality for Deligne categories II: the limit case}
 \author{Inna Entova Aizenbud}
\address{Inna Entova Aizenbud,
Massachusetts Institute of Technology,
Department of Mathematics,
Cambridge, MA 02139 USA.}
\email{inna.entova@gmail.com}

\begin{abstract}
 This paper is a continuation of \InnaG{a previous paper of the author (\cite{EA}), which gave} an analogue to the classical Schur-Weyl duality in the setting of Deligne categories. 
 
 Given a finite-dimensional unital vector space $V$ (a vector space $V$ with \InnaC{a chosen non-zero vector $\mathbbm{1}$}), we constructed in \cite{EA} a complex tensor power of $V$: \InnaG{an $Ind$-object of the Deligne category $\underline{Rep}(S_t)$ which is a Harish-Chandra module for the pair $(\gl(V), \bar{\mathfrak{P}}_{\triv})$, where
$\bar{\mathfrak{P}}_{\triv} \subset GL(V)$ is the mirabolic subgroup preserving the vector $\mathbbm{1}$.}

%  Similarly to the classical Schur-Weyl duality, these two actions allow us to define a contravariant functor $SW_{t}$ from $\underline{Rep}(S_t)$ to the category of $\gl(V)$-modules; more specifically, to the category $\cop$ (category of modules of \InnaA{degree} $t$ in the parabolic category $\co$ for $\gl(V)$ associated with the decomposition $V = \bC \oplus U$).

 This construction allowed us to obtain an exact contravariant functor $\widehat{SW}_{t, \InnaC{V}}$ from the category $\underline{Rep}^{ab}(S_t)$ (the abelian envelope of the category $\underline{Rep}(S_t)$) to a certain \InnaC{localization} of the parabolic category $\co$ associated with the pair $(\gl(V), \bar{\mathfrak{P}}_{\triv})$. 
%  This \InnaC{localization}, denoted by $\widehat{\co}^{\mathfrak{p}}_{t, V}$, is obtained by taking the full subcategory of $\co^{\mathfrak{p}}_{V}$ consisting of modules of \InnaA{degree} $t$, and localizing by the subcategory of finite dimensional modules.

\InnaC{In this paper, we consider the case when $V = \bC^{\infty}$. We define the appropriate version of the parabolic category $\co$ and its localization, and show that the latter is equivalent to a \InnaD{``restricted''} inverse limit of categories $\widehat{\co}^{\mathfrak{p}}_{t,\bC^N}$ with $N$ tending to infinity. The Schur-Weyl functors $\widehat{SW}_{t, \InnaC{\bC^N}}$ then give an anti-equivalence between this category and the category $\underline{Rep}^{ab}(S_t)$.}

This duality provides an unexpected tensor structure on the \InnaD{category} $\widehat{\co}^{\mathfrak{p}_{\infty}}_{t,\bC^{\infty}}$.
\end{abstract}

\keywords{Deligne categories, Schur-Weyl duality}
\maketitle
\setcounter{tocdepth}{3}
%\tableofcontents
\section{Introduction}\label{sec:intro}

\subsection{}
% The study of representations in complex rank involves defining and studying families of abelian categories depending on a parameter $t$ which are polynomial interpolations of the categories of representations of objects such as finite groups, Lie groups, Lie algebras and more. This was done by P. Deligne in \cite{D} for finite dimensional representations of the general linear group $GL_n$, the orthogonal and symplectic groups $O_n, Sp_{2n}$ and the symmetric group $S_n$. 

The Karoubian rigid symmetric monoidal categories $\underline{Rep}(S_t)$, $t \in \bC$, were defined by P. Deligne in \cite{D} as a polynomial family of categories interpolating the categories of finite-dimensional representations of the symmetric groups; namely, at points $n=t \in \bZ_+$ \InnaD{the category $\underline{Rep}(S_{t=n})$ allows} an essentially surjective additive symmetric monoidal functor onto the standard \InnaD{category} $Rep(S_n)$. The \InnaD{categories $\underline{Rep}(S_t)$ were} subsequently studied by P. Deligne and others (e.g. by V. Ostrik, J. Comes in \cite{CO}, \cite{CO2}). 

In \cite{EA}, we gave an analogue to the classical Schur-Weyl duality in the setting of Deligne categories. In order to do this, we defined the ``complex tensor power'' of a finite-dimensional unital \InnaA{complex} vector space (i.e. a vector space $V$ with a distinguished non-zero vector $\InnaB{\triv}$). This ``complex tensor power'' of $V$, \InnaD{denoted by $V^{\underline{\otimes} t}$,} is an $Ind$-object in the category $\underline{Rep}(S_t)$, and comes with an action of $\gl(V)$ on it; \InnaG{moreover, this $Ind$-object is a Harish-Chandra module for the pair $(\gl(V), \bar{\mathfrak{P}}_{\triv})$, where
$\bar{\mathfrak{P}}_{\triv} \subset GL(V)$ is the mirabolic subgroup preserving the vector $\triv$.}
%\InnaB{Furthermore, it can be shown that this object does not depend on the choice of splitting, but only on the pair $(V, \triv)$.}

The ``$t$-th tensor power'' of $V$ is defined for any $t \in \bC$; for $n=t \in \bZ_+$, the functor $\underline{Rep}(S_{t=n}) \rightarrow Rep(S_n)$ takes this $Ind$-object of $\underline{Rep}(S_{t=n})$ to the usual tensor power $V^{\otimes n}$ in $Rep(S_n)$. Moreover, the action of $\gl(V)$ on the former object corresponds to the action of $\gl(V)$ on $V^{\otimes n}$.

This allowed us to define an additive contravariant functor, called the Schur-Weyl functor,
$$SW_{t, V}: \underline{Rep}^{ab}(S_{t}) \longrightarrow \co^{\mathfrak{p}}_{V}, \InnaD{\; \; SW_{t, V}:=\Hom_{\underline{Rep}^{ab}(S_{t})}( \, \cdot \, , V^{\underline{\otimes} t})}$$
Here $\underline{Rep}^{ab}(S_{t})$ is the abelian envelope of the category $\underline{Rep}(S_{t})$ (this envelope was described in \cite{CO2}, \cite[Chapter 8]{D}). \InnaE{The} category $\co^{\mathfrak{p}}_{V}$ is \InnaE{a version of} the parabolic category $\co$ for $\gl(V)$ associated with the \InnaB{pair $(V, \InnaB{ \triv})$}, \InnaE{which is defined as follows. 

%Consider the mirabolic subgroup $\bar{\mathfrak{P}}_{\triv} \subset GL(V)$ preserving the vector $\triv$. 
We define $\co^{\mathfrak{p}}_{V}$ to be the category of Harish-Chandra modules for the pair $(\gl(V), \bar{\mathfrak{P}}_{\triv})$ on which the group $GL(V /\bC \triv)$ acts by polynomial maps, and which satisfy some additional finiteness conditions (similar to the ones in the definition of the usual BGG category $\co$).} 
%(``f.g.'' means that the modules in this category are $\mathcal{U}(gl(V))$-finitely generated).

\InnaD{We now consider the localization of $\co^{\mathfrak{p}}_{V}$ obtained by taking the full subcategory of $\co^{\mathfrak{p}}_{V}$ consisting of modules of \InnaA{degree} $t$ (i.e. modules on which $\id_V \in \End(V)$ acts by the scalar $t$), and localizing} by the Serre subcategory of \InnaE{$\gl(V)$-polynomial} modules. This quotient is denoted by $\widehat{\co}^{\mathfrak{p}}_{t, V}$.

It turns out that for any unital finite-dimensional space $(V, \triv)$ and any $t \in \bC$, the contravariant functor $\InnaF{\widehat{SW}}_{t, V}$ \InnaD{makes $\widehat{\co}^{\mathfrak{p}}_{t, V}$} a Serre quotient of $\underline{Rep}^{ab}(S_{t})^{op}$.

In this paper, we will consider the categories $\widehat{\co}^{\mathfrak{p}_N}_{t, \bC^N}$ for $N \in \bZ_+$ and for $N =\infty$. 
% \InnaC{We define the complex tensor power of $\bC^{\infty}$, and the corresponding version of the parabolic category $\co$ for $\gl_{\infty}$ (denoted by ${\co}^{\mathfrak{p}_{\infty}}_{t, bC^{\infty}}$) and its localization (denoted by $\widehat{\co}^{\mathfrak{p}_{\infty}}_{t, bC^{\infty}}$).}

Defining appropriate restriction functors $$\widehat{\InnaD{\mathfrak{Res}}}_{n-1, n}:\widehat{\co}^{\mathfrak{p}_n}_{t, \bC^n} \longrightarrow \widehat{\co}^{\mathfrak{p}_{n-1}}_{t, \bC^{n-1}}$$
\InnaC{allows us to} consider the inverse limit of the system $((\widehat{\co}^{\mathfrak{p}_n}_{t, \bC^n})_{n \geq 0}, (\widehat{\InnaD{\mathfrak{Res}}}_{n-1, n})_{n \geq 1})$. \InnaC{Inside this inverse limit we consider a full subcategory which is equivalent to $\widehat{\co}^{\mathfrak{p}_{\infty}}_{t, \bC^{\infty}}$. This subcategory is called the ``the restricted inverse limit'' of the system $((\widehat{\co}^{\mathfrak{p}_n}_{t, \bC^n})_{n \geq 0}, (\widehat{\InnaD{\mathfrak{Res}}}_{n-1, n})_{n \geq 1})$, and will be denoted by $\varprojlim_{n \geq 1, \text{ restr}} \widehat{\co}^{\mathfrak{p}_{n}}_{t, \bC^{n}}$.} \InnaG{This category has an intrinsic description, which we give in this paper (intuitively, this is the inverse limit among finite-length categories).}
% We also have specialization functors $$\widehat{\Gamma}_n: \widehat{\co}^{\mathfrak{p}}_{t, bC^{\infty}} \longrightarrow \widehat{\co}^{\mathfrak{p}_n}_{t, bC^{n}}$$} 
% and a contravariant functor $$ \underline{Rep}^{ab}(S_{t}) \longrightarrow \varprojlim_{N \in \bZ_+} \widehat{\co}^{\mathfrak{p}_N}_{t, \bC^N}$$ induced by the Schur-Weyl functors $SW_{t, \bC^N}$.
% then define a a full subcategory of $ \varprojlim_{\InnaD{n \geq 1}} \widehat{\co}^{\mathfrak{p}_n}_{t, \bC^n}$ called ``the restricted inverse limit'' of the system $((\widehat{\co}^{\mathfrak{p}_n}_{t, \bC^n})_{n \geq 0}, (\widehat{\InnaD{\mathfrak{Res}}}_{n-1, n})_{n \geq 1})$. Intuitively, one can describe the ``the restricted inverse limit'' as follows:

% By definition, the objects in $\varprojlim_{\InnaD{n \geq 1}} \widehat{\co}^{\mathfrak{p}_n}_{t, \bC^n}$ are sequences $(M_n)_{n\in \bZ_+}$ such that $M_n \in \widehat{\co}^{\mathfrak{p}_n}_{t, \bC^n}$, together with isomorphisms $\widehat{\InnaD{\mathfrak{Res}}}_{n-1, n}(M_n) \rightarrow M_{n-1}$. The objects in the restricted inverse limit are those sequences $(M_n)_{n\in \bZ_+}$ for which the integer sequence $\{\ell(M_n)\}_{n\in \bZ_+}$ stabilizes ($\ell(M_n)$ is the length of the \InnaA{$\widehat{\co}^{\mathfrak{p}_n}_{t, \bC^n}$-object} $M_n$).

\InnaC{\InnaD{Similarly to \cite{EA}, we define} the complex tensor power of \InnaG{the unital vector space} $(\bC^{\infty}, \triv:=e_1)$, and the corresponding Schur-Weyl contravariant functor \InnaD{${SW}_{t, \bC^{\infty}}$. As in the finite-dimensional case, this functor induces an exact contravariant functor $\widehat{SW}_{t, \bC^{\infty}}$, and} we have the following commutative diagram:}

 $$\xymatrix{ &\underline{Rep}^{ab}(S_{t})^{op} \ar[rr]_{\widehat{SW}_{t, \InnaD{\text{lim}}}}  \ar[rrd]_{\widehat{SW}_{t, \bC^{\infty}}} &{} &\varprojlim_{n \geq 1, \text{ restr}} \widehat{\co}^{\mathfrak{p}_{n}}_{t, \bC^{n}}  \\ &{} &{} &\widehat{\co}^{\mathfrak{p}_{\infty}}_{t, \bC^{\infty}} \ar[u]  }$$

 \InnaD{The contravariant functors $\widehat{SW}_{t, \bC^{\infty}}$, $\widehat{SW}_{t, \InnaD{\text{lim}}}$ turn out to be} anti-equivalences
 % an anti-equivalence of abelian categories 
% $$ \underline{Rep}^{ab}(S_{t}) \longrightarrow \varprojlim_{\InnaD{n \geq 1}, \text{ restr}} \widehat{\co}^{\mathfrak{p}_n}_{t, \bC^n}$$
induced by the Schur-Weyl functors $SW_{t, \bC^n}$. 
% They give us a short exact sequence of abelian categories and exact functors
% $$ 0 \rightarrow Rep(\gl_{\infty})_{poly} \cong \bigoplus_{d \in \bZ_+} Rep(S_d)^{op} \rightarrow {\co}^{\mathfrak{p}_{\infty}}_{\bC^{\infty}} \rightarrow \bigoplus_{t \in \bC} \underline{Rep}^{ab}(S_{t})^{op} \rightarrow 0$$ Here the category $Rep(\gl_{\infty})_{poly}$ of polynomial representations of $\gl_{\infty}$ is a Serre subcategory of the category ${\co}^{\mathfrak{p}_{\infty}}_{\bC^{\infty}}$, and the corresponding Serre quotient is equivalent to $\bigoplus_{t \in \bC} \underline{Rep}^{ab}(S_{t})^{op}$. 

The anti-equivalences $\widehat{SW}_{t, \bC^{\infty}}$, $\widehat{SW}_{t, \InnaD{\text{lim}}}$ induce an unexpected structure of a rigid symmetric monoidal category on $$\widehat{\co}^{\mathfrak{p}_{\infty}}_{t, \bC^{\infty}} \cong \varprojlim_{\InnaD{n \geq 1}, \text{ restr}} \widehat{\co}^{\mathfrak{p}_n}_{t, \bC^n}$$ \InnaD{We obtain an interesting corollary: the duality in this category given by the tensor structure will coincide with the one arising from the usual notion of duality in BGG category $\co$.}

\subsection{Notation and structure of the paper}
The base field throughout the paper will be $\bC$. \InnaG{The notation and definitions used thoughtout this paper can be found in \cite[Section 2]{EA}.}

\InnaG{Sections \ref{sec:Del_cat_S_nu} and \ref{sec:poly_rep} contain preliminaries on the Deligne category $\underline{Rep}(S_{\nu})$ (thoughout the paper, we use the parameter $\nu$ instead of $t$), the categories of polynomial representations of $\gl_N$ ($N \in \bZ_+ \cup \{ \infty \}$) and the parabolic category $\co$ for $\gl_N$. These sections are based on \cite{EA}, \cite{EA1} and \cite{SS}. }

%as well as the abelian structure of the abelian envelope $\underline{Rep}^{ab}(S_{\nu})$ of $\underline{Rep}(S_{\nu})$.

% In Section, we recall the definitions of Subsection \ref{ssec:Stab_inv_lim_rep_poly} gives a description of the category $Rep(\gl_{\infty})_{poly}$ as a restricted inverse limit of the categories $Rep(\gl_n)_{poly}$ as $n$ tends to infinity (this description will be used later).

% In Section \ref{sec:par_cat_o}, we define the version of the parabolic category $\co$ for $\gl_N$ which we will consider (including the case when $N = \infty$; see Subsection \ref{ssec:def_par_cat_O_gl_N}), and recall the necessary information about this category (see Subsection \ref{ssec:structure_cat_O}). 

In Section \ref{sec:lim_par_cat_O}, we give a description of the parabolic category $\co$ for $\gl_{\infty}$ as a restricted inverse limit of the parabolic categories $\co$ for $\gl_n$ as $n$ tends to infinity. 
%Subsections \ref{ssec:Res_functors}, \ref{ssec:Spec_functors} describe the appropriate restriction and specialization functors, and Subsection \ref{ssec:inv_lim_par_cat_O} shows that .

In Sections \ref{sec:comp_tens_power} and \ref{sec:SW_functor}, we recall the definition of the complex tensor power $\InnaE{(\bC^N)^{\underline{\otimes} \nu}}$, and define the functors $SW_{\nu, V} : \underline{Rep}^{ab}(S_{\nu})^{op} \rightarrow \co^{\mathfrak{p}}_{\nu, V}$ and $\widehat{SW}_{\nu, V} : \underline{Rep}^{ab}(S_{\nu})^{op} \rightarrow \widehat{\co}^{\mathfrak{p}}_{\nu, V}$ for a unital vector space $(V, \triv)$ \InnaD{(finite or infinite-dimensional)}. \InnaD{In Subsection \ref{ssec:finite_dim_SW}, we recall the finite-dimensional case (studied in \cite{EA}).}
%, and in particular Theorem \ref{introthrm:SW_almost_equiv}}. 
 
%  The material of Sections \ref{sec:comp_tens_power}, \ref{sec:SW_functor} is based on \cite{EA}.

%  In Section \ref{sec:class_SW_inv_limit}, we consider the restricted inverse limit construction in the case of the classical Schur-Weyl duality, which motivates our construction for the Deligne categories. 
 %Subsection \ref{ssec:class_Schur_Weyl} gives a short statement of the classical Schur-Weyl duality for $\gl(V)$, $S_d$ (\InnaD{here} $\dim(V), d$ \InnaG{are non-negative integers}); Subsection \ref{ssec:class_Schur_Weyl_limit} describes the Schur-Weyl duality (anti-equivalence) for the categories $\bigoplus_d Rep(S_d)$ and $Rep(\gl_{\infty})_{poly}$.
 \InnaG{Section \ref{sec:class_SW_inv_limit} discusses the restricted inverse limit construction in the case of the classical Schur-Weyl duality, which motivates our construction for the Deligne categories. 
% Subsection \ref{ssec:restr_SW_functors_compat} discusses the compatability of the restriction functors between the categories $\widehat{\co}^{\mathfrak{p}_n}_{t, \bC^n}$ with the Schur-Weyl functors defined in Subsection \ref{ssec:SW_functor}; Subsection \ref{ssec:sw_limit_thrm} proves Theorem \ref{introthrm:SW_limit_equivalence}. 
 Sections \ref{sec:SW_duality_limit} and \ref{sec:SW_duality_inf_dim} prove the main results of the paper. 
 %Theorem \ref{introthrm:SW_limit_equivalence}; Section \ref{sec:SW_duality_inf_dim} establishes Corollary \ref{introcor:SW_infinity_functor_equiv}. 
 Section \ref{sec:duality} discusses the relation between the rigidity (duality) in $\underline{Rep}^{ab}(S_{\nu})$ and the duality in the parabolic category $\co$ for $\gl_{\infty}$.}
 %the Schur-Weyl duality (anti-equivalence) between the Deligne category $\underline{Rep}^{ab}(S_{\nu})$ and the localized parabolic category $\co$ for $\gl_{\infty}$.
% 

%The framework of inverse limits and filtered inverse limits of a sequence of categories is discussed in Appendix \ref{app:inv_limit_cat}.

\subsection{Acknowledgements}
I would like to thank my advisor, Pavel Etingof, for his guidance and suggestions.

\section{Deligne category \texorpdfstring{$\underline{Rep}(S_{\nu})$}{}}\label{sec:Del_cat_S_nu}
\InnaG{A detailed description of the Deligne category $\underline{Rep}(S_t)$ and its abelian envelope can be found in \cite{CO, CO2, D, E1} as well as \cite{EA1}}. Throughout the paper, we will use the parameter $\nu$ instead of the parameter $t$ used in the introduction.
\subsection{General description}\label{ssec:S_nu_general}
% Deligne's categories $\underline{Rep}(S_{\nu})$ are a family of Karoubian tensor categories over $\bC$ defined for any $\nu \in \bC$, and ``flat with respect to parameter $\nu$'' (one can view $\nu$ as a formal parameter, with $Obj(\underline{Rep}(S_{\nu}))$ not depending on $\nu$, and the $\Hom$ spaces being modules over \InnaA{$Frak(\bC[[T]])$}).
% 
For any $\nu \in \bC$, the category $\underline{Rep}(S_{\nu})$ is generated, as a $\bC$-linear Karoubian tensor category, by one object, denoted $\fh$. This object is the analogue of the permutation representation of $S_n$, and any object in $\underline{Rep}(S_{\nu})$ is a direct summand in a direct sum of tensor powers of $\fh$.

For $\nu \notin \bZ_{+}$, $\underline{Rep}(S_{\nu})$ is a semisimple abelian category.

% \begin{notation}
%  We will denote Deligne's category for integer value $\nu:=n \geq 0$ as $\underline{Rep}(S_{\nu=n})$, to distinguish it from the classical category $Rep(S_{n})$ of representations of the symmetric group $S_{n}$. 
% \end{notation}

If $\nu$ is a non-negative integer, then the category $\underline{Rep}(S_{\nu})$ has a tensor ideal $\idealI_{\nu}$, called the ideal of negligible morphisms (this is the ideal of morphisms $f: X \longrightarrow Y$ such that $tr(fu)=0$ for any morphism $u: Y \longrightarrow X$). In that case, the classical category $Rep(S_n)$ of finite-dimensional representations of the symmetric group for $n:=\nu$ is equivalent to $\underline{Rep}(S_{\nu=n})/\idealI_{\nu}$ (equivalent as Karoubian rigid symmetric monoidal categories).

The full, essentially surjective functor $\underline{Rep}(S_{\nu=n}) \rightarrow Rep(S_n)$ defining this equivalence will be denoted by $\mathcal{S}_n$.

Note that $\mathcal{S}_n$ sends $\fh$ to the permutation representation of $S_n$.
% \begin{remark}
%  Although $\underline{Rep}(S_{\nu})$ is not semisimple and not even abelian when $\nu \InnaA{=n} \in \bZ_+$, a weaker statement holds (see \cite[Proposition 5.1]{D}): consider the full subcategory $\underline{Rep}(S_{\nu \InnaA{=n}})^{( \InnaA{n}/2)}$ of $\underline{Rep}(S_{\nu})$ whose objects are directs summands of sums of $\fh^{\otimes m}, 0\leq m \leq  \InnaA{\frac{n}{2}}$. This subcategory is abelian semisimple, and the restriction $\InnaA{\mathcal{S}}_{n} \lvert_{\underline{Rep}(S_{\nu \InnaA{=n}})^{( \InnaA{n}/{2})}}$ is fully faithful.
% \end{remark}

The indecomposable objects of $\underline{Rep}(S_{\nu})$, regardless of the value of $\nu$, are \InnaA{parametrized (up to isomorphism)} by all Young diagrams (of arbitrary size). We will denote the indecomposable object in $\underline{Rep}(S_{\nu})$ corresponding to the Young diagram $\T$ by $X_{\T}$.

For non-negative integer $\nu =:n$, we have: the partitions $\lambda$ for which $X_{\lambda}$ has a non-zero image in the quotient $\underline{Rep}(S_{\nu \InnaA{=n}})/\idealI_{\nu\InnaA{=n}} \cong Rep(S_n)$ are exactly the $\lambda$ for which $\lambda_1+\abs{\lambda} \leq n$.

If $\lambda_1+\abs{\lambda} \leq n$, then the image of $\lambda$ in $Rep(S_n)$ is the irreducible representation of $S_n$ corresponding to the \InnaG{Young diagram $\tilde{\lambda}(n)$: the Young diagram obtained by adding a row of length $n - \abs{\lambda}$ on top of $\lambda$.}
\mbox{}

\InnaG{ For each $\nu$, we define an equivalence relation $\stackrel{\nu}{\sim}$ on the set of all Young diagrams: we say that $\lambda \stackrel{\nu}{\sim} \lambda'$ if the sequence $ (\nu -\abs{\lambda}, \lambda_1-1, \lambda_2-2, ...)$ can be obtained from the sequence $ (\nu -\abs{\lambda'}, \lambda'_1-1, \lambda'_2-2, ...)$ by permuting a finite number of entries.

The equivalence classes thus obtained are in one-to-one correspondence with the blocks of the category $\underline{Rep}(S_{\nu})$ (see \cite{CO}). 

We say that a block is {\it trivial} if the corresponding equivalence class is trivial, i.e. has only one element (in that case, the block is a semisimple category). 

The non-trivial equivalence classes (respectively, blocks) are parametrized by all Young diagrams of size $\nu$; in particular, this happens only if $\nu \in \bZ_+$. These classes are always of the form $ \{\lambda^{(i)}\}_i$, with
$$\lambda^{(0)} \subset \lambda^{(1)} \subset \lambda^{(2)} \subset ...$$ (each $\lambda^{(i)}$ can be explicitly described based on the Young diagram of size $\nu$ corresponding to this class).}

\subsection{Abelian envelope}\label{ssec:S_nu_abelian_env}
% This section follows \cite{CO2}, \cite[Proposition 8.19]{D}.

As it was mentioned before, the category $\underline{Rep}(S_{\nu})$ is defined as a Karoubian category. For $\nu \notin \bZ_+$, it is semisimple and thus abelian, but for $\nu \in \bZ_+$, it is not abelian. Fortunately, it has been shown that $\underline{Rep}(S_{\nu})$ possesses an ``abelian envelope'', that is, that it can be embedded (\InnaD{as a full monoidal subcategory}) into an abelian rigid symmetric monoidal category, and this abelian envelope has a universal mapping property (see \cite[Theorem 1.2]{CO2}, \cite[8.21.2]{D}). 

% \begin{notation}\label{notn:abel_env}
 We will denote the abelian envelope of the Deligne category $\underline{Rep}(S_{\nu})$ by $\underline{Rep}^{ab}(S_{\nu})$ (with $\underline{Rep}^{ab}(S_{\nu}) := \underline{Rep}(S_{\nu})$ for $\nu \notin \bZ_+$).
% \end{notation}

An explicit construction of the category $\underline{Rep}^{ab}(S_{\nu=n})$ is given in \cite{CO2}, and a detailed description of its structure can be found in \cite{EA}. 
\mbox{}

It turns out that the category $\underline{Rep}^{ab}(S_{\nu})$ is a highest weight category \InnaA{(with infinitely many weights)} corresponding to the partially ordered set $(\{ \text{Young diagrams} \}, \geq )$, where
 $$\lambda \geq \mu \text{ iff } \lambda \stackrel{\nu}{\sim} \mu, \lambda \subset \mu$$
 (namely, in a non-trivial $\stackrel{\nu}{\sim}$-class, $\lambda^{(i)} \geq \lambda^{(j)}$ if $i \leq j$).

Thus the isomorphism classes of simple objects in $\underline{Rep}^{ab}(S_{\nu})$ are parametrized by the set of Young diagrams of arbitrary sizes. We will denote the simple object corresponding to $\lambda$ by $\mathbf{L}(\lambda)$.

We will also use the fact that blocks of the category $\underline{Rep}^{ab}(S_{\nu})$, just like the blocks of $\underline{Rep}(S_{\nu})$, are parametrized by $\stackrel{\nu}{\sim}$-equivalence classes. 
 
 For each $\stackrel{\nu}{\sim}$-equivalence class, 
 \InnaD{the corresponding block of $\underline{Rep}(S_{\nu})$ is \InnaE{the full subcategory of tilting objects in} the corresponding block of $\underline{Rep}^{ab}(S_{\nu})$ (see \cite[Proposition 2.9, Section 4]{CO2}).}

\section{\texorpdfstring{$\gl_{\infty}$}{Infinite Lie algebra gl} and the restricted inverse limit of representations of \texorpdfstring{$\gl_n$}{finite-dimensional Lie algebras gl}}\label{sec:poly_rep}
In this section, we discuss the category of polynomial representations of the Lie algebra $\gl_{\infty}$ and its relation to the categories of polynomial representations of $\gl_n$ for $n \geq 0$. The representations of the Lie algebra $\gl_{\infty}$ are discussed in detail in \cite{PS}, \cite{DPS}, as well as \cite[Section 3]{SS}.

Most of the constructions and the proofs of the statements appearing in this section can be found in \cite[Section 7]{EA1}. 

\subsection{The \texorpdfstring{Lie algebra $\gl_{\infty}$}{infinite Lie algebra gl}}\label{ssec:rep_gl_infty}
Let $\bC^{\infty}$ be a complex vector space with a countable basis $e_1, e_2, e_3, ...$.

Consider the Lie algebra $\gl_{\infty}$ of infinite matrices $A=(a_{ij})_{i, j \geq 1}$ with finitely many non-zero entries. We have a natural action of $\gl_{\infty}$ on $\bC^{\infty}$ \InnaD{and on the restricted dual} $\bC^{\infty}_{*} = span_{\bC}(e_1^*, e_2^*, e_3^*, ...)$ (here $e_i^*$ is the linear functional dual to $e_i$: $e_i^*(e_j) = \delta_{ij}$). 
%In fact, we have: $\gl_{\infty} \cong \bC^{\infty} \otimes \bC^{\infty}_{*}$.

%Put $\triv:=e_1$ and $U_{\infty}:=span_{\bC}(e_2, e_3, ...)$.

Let $N \in \bZ_+ \cup \{ \infty \}$, and let $m \geq 1$. We will consider the Lie subalgebra $\gl_m \subset \gl_{N}$ which consists of matrices $A=(a_{ij})_{1 \leq i, j \leq N}$ for which $a_{ij} =0$ whenever $i>m$ or $j>m$. We will also denote by $\gl_m^{\perp}$ the Lie subalgebra of $\gl_{N}$ consisting of matrices $A=(a_{ij})_{1 \leq i, j \leq N}$ for which $a_{ij} =0$ whenever $i\leq m$ or $j \leq m$. 
\begin{remark}
 Note that $\gl_{\InnaD{m}}^{\perp} \cong \gl_{N-m}$ for any $N, m$.
\end{remark}

\subsection{Categories of polynomial representations of \texorpdfstring{$\gl_N$}{gl}}
% \Inna{ insert def of $Rep(\gl_n)_{alg}$}
In this subsection, $N \in \bZ_+ \cup \{\infty \}$. The notation $\bC^N_*$ will stand for $(\bC^N)^*$ whenever $N \in \bZ_+$, and for $\bC^{\infty}_*$ when $N = \infty$.

Consider the category $Rep(\gl_{N})_{poly}$ of polynomial representations of $\gl_{N}$: this is the category of the representations of $\gl_{N}$ which can be obtained as \InnaD{summands} of a direct sum of tensor powers of the tautological representation $\bC^N$ of $\gl_{N}$. 

% This is the category of $\gl_{N}$-modules which are direct summands in finite direct sums of tensor powers of $\bC^N$, and $\gl_{N}$-equivariant morphisms between them. 
% This category is discussed in detail in \cite[Section 2.2]{SS}. 

It is easy to see that this is a semisimple abelian category, whose simple objects are parametrized (up to isomorphism) by all Young diagrams of arbitrary sizes \InnaD{whose length does not exceed $N$}: the simple object corresponding to $\lambda$ is $ S^{\lambda} \bC^{N}$.

\begin{remark}
Note that $Rep(\gl_{\infty})_{poly}$ is the free abelian symmetric monoidal category generated by one object (see \cite[(2.2.11)]{SS}). It has a equivalent definition as the category of polynomial functors of bounded degree, which can be found in \cite{HY} and in \cite{SS}.
\end{remark}

% \begin{corollary}
%  Let $\lambda, \mu$ be two Young diagrams. Then
%  \begin{enumerate}
%   \item $\dim \Hom_{\gl_{\infty}}(\vinf \otimes S^{\lam}  \vinf, S^{\mu} \vinf) \neq 0$ implies $\abs{\lam} +1 = \abs{\mu}$.
%   \item $\dim \Hom_{\gl_{\infty}}(\bC^{\infty}_{*} \otimes S^{\lam}  \vinf, S^{\mu} \vinf) \neq 0$ implies $\abs{\lam} -1 = \abs{\mu}$.
%  \end{enumerate}
% 
% \end{corollary}
% \begin{proof}
%  \Inna{fill in}
% \end{proof}

Next, we define a natural $\bZ_+$-grading on objects in $Ind-Rep(\gl_{N})_{poly}$ (c.f. \cite[(2.2.2)]{SS}):
\begin{definition}\label{def:grading_poly_rep}
 The objects in $Ind-Rep(\gl_{N})_{poly}$ have a natural $\bZ_+$-grading. Namely, given $M \in Ind-Rep(\gl_{N})_{poly}$, \InnaD{we consider the decomposition} $ M = \bigoplus_{\lambda} S^{\lambda} \bC^N \otimes mult_{\lambda}$ (here $ mult_{\lambda}$ is the multiplicity space of $S^{\lambda} \bC^N$ in $M$), we define $$gr_k(M):= \bigoplus_{\lambda : \abs{\lambda}=k} S^{\lambda} \bC^N \otimes mult_{\lambda}$$

\end{definition}
Of course, the morphisms in $Ind-Rep(\gl_{N})_{poly}$ respect this grading.

% Lastly, we define two natural transformations: 
%  \begin{definition}
% Define an endomorphism $\Omega$ of the identity functor of $Ind-Rep(\gl_{N})_{poly}$ given by $$\Omega_{S^{\lambda} \bC^N} = \abs{\lambda} \id_{S^{\lambda} \bC^N} : S^{\lambda} \bC^N \rightarrow S^{\lambda} \bC^N$$ for any Young diagram $\lam$. 
%  \end{definition}
%  
%  \begin{remark}
%  The endomorphism $\Omega$ corresponds to the action of the central element $\id_{\bC^N} \in \gl_N$ when $N \in \bZ_+$. 
%  \end{remark}
%  
%  \begin{definition}
% Define a natural transformation $Act: \gl_N \otimes (\cdot) \rightarrow \id_{Ind-Rep(\gl_{N})_{poly}}$. This transformation is given by the maps $Act_M: \gl_N \otimes M \rightarrow M$ which are just the maps defining the action of the Lie algebra $\gl_N$ on $M$.
%  \end{definition}
%  

\subsection{Specialization and restriction functors}\label{ssec:spec_res_funct_alg}

% Recall that the abelian category $Rep(\gl_n)_{poly}$ is semisimple, with $\{S^{\lambda} \bC^n \}_{\lambda: \ell(\lambda) \leq n}$ being a set of representatives of isomorphism classes of simple objects in this category.
%\InnaA{finite-dimensional polynomial} $\gl_n$-modules.

We now define specialization functors from the category of representations of $\gl_{\infty}$ to the categories of representations of $\gl_n$ (c.f. \cite[Section 3]{SS}):
\begin{definition}\label{def:Gamma_func}
$$\Gamma_n: Rep(\gl_{\infty})_{poly} \rightarrow Rep(\gl_{n})_{poly}, \; \Gamma_n := (\cdot)^{\gl_{n}^{\perp}}$$
\end{definition}
% The proof that $\Gamma_n$ is well-defined is a trivial modification of the proof of Lemma \ref{lem:res_func_well_def} for the triple $\gl_n, \gl_n^{\perp}, \gl_{\infty}$ instead of $\gl_{n-1}, \gl_{n-1}^{\perp}, \gl_{n}$.

One can easily check (see \cite[Section 7]{EA1}) that the functor $\Gamma_n$ is well-defined. 
% \begin{lemma}\label{lem:Gamma_well_def}
%  The functor $\Gamma_n$ is well-defined. 
% \end{lemma}
% 
% \begin{proof}
%  First of all, notice that the subalgebras $\gl_{n}, \gl_{n}^{\perp} \subset \gl_{\infty}$ commute, and therefore the subspace of $\gl_{n}^{\perp}$-invariants of a $\gl_{\infty}$-module automatically carries an action of $\gl_{n}$. 
%  
%  We need to check that given a polynomial $\gl_{\infty}$-representation $M$ of $\gl_{n}$, the $\gl_{n}^{\perp}$-invariants of $M$ form a polynomial respresentation of $\gl_{n}$. It is enough to check that this is true when $M = (\bC^{\infty})^{\otimes r}$.
%  
%  The latter statement is checked explicitly on basis elements of the form $e_{i_1} \otimes e_{i_2} \otimes ... \otimes e_{i_r} $.
%  The subspace of $\gl_{n}^{\perp}$-invariants is spanned by the basis elements $e_{i_1} \otimes e_{i_2} \otimes ... \otimes e_{i_r} $ for which $i_1, ..., i_r \leq  n$. Thus the $\gl_{n}^{\perp}$-invariants of $(\bC^{\infty})^{\otimes r} $ form the $\gl_{n}$-representation $(\bC^{n})^{\otimes r}$. 
%  
% \end{proof}
% In particular, one proves in the same way that the $\gl_{n}^{\perp}$-invariants of $(\bC^{\infty})^{\otimes r} $ form the $\gl_{n}$-representation $(\bC^{n})^{\otimes r} $.
%  
The following Lemma is proved in \cite{PS}, \cite[Section 3]{SS}:
 \begin{lemma}\label{lem:Gamma_is_tensor}
 The functors $\Gamma_n$ are additive symmetric monoidal functors between semisimple symmetric monoidal categories. Their effect on the simple objects is described as follows: for any Young diagram $\lam$, $\Gamma_n(S^{\lam} \bC^{\infty}) \cong S^{\lam} \bC^n$.
\end{lemma}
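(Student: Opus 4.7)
The plan is to reduce everything to an explicit computation on tensor powers of $\bC^{\infty}$, and then to extend to arbitrary polynomial representations by semisimplicity and Schur--Weyl duality.

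First I would decompose $\bC^{\infty}$ as a $(\gl_n \oplus \glNp)$-module: $\bC^{\infty} = \bC^n \oplus W$, where $\bC^n = \Span(e_1,\ldots,e_n)$ carries the tautological action of $\gl_n$ and the trivial action of $\glNp$, while $W = \Span(e_{n+1}, e_{n+2}, \ldots)$ carries the trivial action of $\gl_n$ and the tautological action of $\glNp \cong \gl_{\infty}$. Since $W$ is a polynomial $\glNp$-module with no trivial summand, $W^{\glNp} = 0$, so $(\bC^{\infty})^{\glNp} = \bC^n$. Distributing the tensor product over the direct sum yields, for each $k \geq 0$,
$$(\bC^{\infty})^{\otimes k} \;=\; \bigoplus_{I \subset \{1,\ldots,k\}} M_I, \qquad M_I \,=\, \bigotimes_{j=1}^k \begin{cases} \bC^n & \text{if } j \in I \\ W & \text{if } j \notin I \end{cases}$$
as $(\gl_n \oplus \glNp)$-modules. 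Taking $\glNp$-invariants kills every summand with $I^c \neq \emptyset$, so
$$\Gamma_n\bigl((\bC^{\infty})^{\otimes k}\bigr) \;=\; (\bC^n)^{\otimes k}.$$
This decomposition is moreover $S_k$-equivariant, since the symmetric group permutes the subsets $I$ but preserves the one with $I = \{1,\ldots,k\}$ set-wise.

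Next I would establish well-definedness, additivity, and the value on simples. Additivity is automatic, since taking invariants under a Lie subalgebra commutes with direct sums. For the simples, each $S^{\lam} \bC^{\infty}$ is cut out of $(\bC^{\infty})^{\otimes \abs{\lam}}$ by applying the Schur functor $\Hom_{S_{\abs{\lam}}}(S^{\lam}, -)$, and this construction commutes with $\Gamma_n$ (since $S_{\abs{\lam}}$ acts $\glNp$-equivariantly). Thus
$$\Gamma_n(S^{\lam} \bC^{\infty}) \;\cong\; \Hom_{S_{\abs{\lam}}}\bigl(S^{\lam}, (\bC^n)^{\otimes \abs{\lam}}\bigr) \;\cong\; S^{\lam} \bC^n,$$
which in particular shows that $\Gamma_n$ lands in $Rep(\gl_n)_{poly}$; by semisimplicity of the source category, this extends to all of $Rep(\gl_{\infty})_{poly}$.

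Finally I would verify the symmetric monoidal structure. There is an evident natural morphism $\Gamma_n(M) \otimes \Gamma_n(N) \to \Gamma_n(M \otimes N)$, coming from the inclusion of $\glNp$-invariants into the tensor product, and it is compatible with symmetry and associativity constraints. To see it is an isomorphism, by additivity and semisimplicity it suffices to check this on pairs $(S^{\lam}\bC^{\infty}, S^{\mu}\bC^{\infty})$; equivalently, on pairs of tensor powers $((\bC^{\infty})^{\otimes k}, (\bC^{\infty})^{\otimes \ell})$, where both sides equal $(\bC^n)^{\otimes (k+\ell)}$ by the explicit formula above. The main subtlety I expect is precisely this last step: one needs to see that the natural comparison map really is the identity under the identifications just made, rather than merely an abstract isomorphism of representations. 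This is a direct check using the decomposition of $\bC^{\infty}$ into $\bC^n \oplus W$ and the vanishing $W^{\glNp} = 0$, and once it is done, the unit object is handled trivially.
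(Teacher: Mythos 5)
The paper does not prove Lemma \ref{lem:Gamma_is_tensor} at all; it refers to \cite{PS} and \cite[Section 3]{SS} (and to \cite[Section 7]{EA1} for the constructions in this section). So there is nothing in this paper to compare against directly. Your proof is a correct, self-contained argument of the kind the cited references give: decompose $\bC^{\infty} = \bC^n \oplus W$ as a $(\gl_n \oplus \gl_n^{\perp})$-module, compute $\Gamma_n$ on tensor powers explicitly, pass to Schur functor summands using $S_k$-equivariance of taking $\gl_n^{\perp}$-invariants, and then verify the lax monoidal structure map is an isomorphism by reducing to tensor powers where it becomes the identity. The key observation that it is not enough to match dimensions but one must check the specific comparison map is indeed the only real content in the monoidality claim, and you handle it correctly.

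One small gap worth closing: to kill the summand $M_I$ for $I^c \neq \emptyset$ you invoke $W^{\gl_n^{\perp}} = 0$, but what you actually need is $(W^{\otimes m})^{\gl_n^{\perp}} = 0$ for all $m \geq 1$. This follows by the same reasoning ($W^{\otimes m}$ is a polynomial $\gl_n^{\perp} \cong \gl_{\infty}$-module of degree $m \geq 1$, so it decomposes into $S^{\mu}W$ with $\abs{\mu} = m \geq 1$, none of which contains the trivial representation); you should say so explicitly, since as written the justification only covers $m = 1$.
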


\InnaD{Next, we define the restriction functors we \InnaE{will} use:}
\begin{definition}\label{def:res_funct_poly_repr}
Let $n \geq 1$. We define the functor $$\InnaD{\mathfrak{Res}}_{n-1, n}: Rep(\gl_n)_{poly} \rightarrow Rep(\gl_{n-1})_{poly}, \; \InnaD{\mathfrak{Res}}_{n-1, n} := (\cdot)^{\gl_{n-1}^{\perp}}$$
\end{definition} 
% 
% \begin{lemma}\label{lem:res_func_well_def}
%  This functor is well-defined.
% \end{lemma}
Again, one can easily show that \InnaD{these} functors are well-defined.

\begin{remark}
 There is an alternative definition of the functors $ \InnaD{\mathfrak{Res}}_{n-1, n}$. \InnaD{One can think of the functor $\InnaD{\mathfrak{Res}}_{n-1, n}$ acting on a $\gl_n$-module $M$ as taking the restriction of $M$ to $\gl_{n-1}$ and then considering only the vectors corresponding to ``appropriate'' central characters. 
 
 More specifically, we} say that a $\gl_n$-module $M$ is of {\it degree} $d$ if $\id_{\bC^n} \in \gl_n$ acts by $d \id_M$ on $M$. Also, given any $\gl_n$-module $M$, we may consider the maximal submodule of $M$ of degree $d$, and denote it by $deg_d(M)$. This defines an endo-functor $deg_{d}$ of $Rep(\gl_n)_{poly}$.
 
 Note that a simple module $S^{\lam} \bC^n$ is of degree $\abs{\lambda}$.
 
 \mbox{}
 
 The notion of degree gives a decomposition
 $$Rep(\gl_n)_{poly} \cong \bigoplus_{d \in \bZ_+} Rep(\gl_{n})_{poly, d}$$ where $ Rep(\gl_{n})_{poly, d}$ is the full subcategory of $Rep(\gl_n)_{poly}$ consisting of all polynomial $\gl_n$-modules of degree $d$.
 %(in particular, $Rep(\gl_{n})_{poly, d}$ is the image of $Rep(S_d)$ under $\mathtt{SW}_{\bC^n}$).

Then
$$ \InnaD{\mathfrak{Res}}_{n-1, n} = \oplus_{d \in \bZ_+} \InnaD{\mathfrak{Res}}_{d, n-1, n}: Rep(\gl_n)_{poly} \rightarrow Rep(\gl_{n-1})_{poly} $$ where
$$\InnaD{\mathfrak{Res}}_{d, n-1, n}: Rep(\gl_{n})_{poly, d} \rightarrow Rep(\gl_{n-1})_{poly, d} , \, \InnaD{\mathfrak{Res}}_{d, n-1, n}:= deg_{d} \circ \InnaD{\mathrm{Res}}_{\gl_{n-1}}^{\gl_n}$$
where $\InnaD{\mathrm{Res}}_{\gl_{n-1}}^{\gl_n}$ is the usual restriction functor for the pair $\gl_{n-1} \subset \gl_n$.
\end{remark}

Once again, the functors $\InnaD{\mathfrak{Res}}_{n-1, n}$ are additive functors between semisimple categories, and satisfy:
\begin{lemma}\label{lem:res_func_simples}
 $\InnaD{\mathfrak{Res}}_{n-1, n}(S^{\lambda} \bC^n) \cong S^{\lambda} \bC^{n-1}$ for any Young diagram $\lam$.
\end{lemma}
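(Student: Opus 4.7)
The plan is to use the classical branching rule for $\gl_n \downarrow \gl_{n-1} \times \gl_{n-1}^{\perp}$, combined with the observation that under the identification $\gl_{n-1}^{\perp} \cong \gl_1$ (generated by $E_{nn}$) the invariants functor $(\,\cdot\,)^{\gl_{n-1}^{\perp}}$ is just extraction of a single weight space. Note that $\gl_{n-1}$ and $\gl_{n-1}^{\perp}$ commute inside $\gl_n$, so the $\gl_{n-1}^{\perp}$-invariants automatically form a $\gl_{n-1}$-submodule, which is manifestly polynomial, so everything lives in the correct category.

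First I will decompose $S^{\lambda}\bC^n$ as a $\gl_{n-1} \times \gl_{n-1}^{\perp}$-module. By the Pieri/Gelfand-Tsetlin branching rule, $S^{\lambda}\bC^n \big|_{\gl_{n-1}} \cong \bigoplus_{\mu} S^{\mu}\bC^{n-1}$, where $\mu$ runs over Young diagrams interlacing with $\lambda$ (i.e.\ $\lambda_{i+1} \leq \mu_i \leq \lambda_i$). Because $E_{nn}$ acts on $(\bC^n)^{\otimes k}$ by counting the tensor factors equal to $e_n$, and this count is constant along each $\gl_{n-1}$-isotypic component, the generator $E_{nn}$ of $\gl_{n-1}^{\perp}$ acts by the scalar $|\lambda|-|\mu|$ on the $S^{\mu}\bC^{n-1}$-summand. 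Taking $\gl_{n-1}^{\perp}$-invariants therefore keeps exactly the summands with $|\mu|=|\lambda|$. Interlacing together with $|\mu|=|\lambda|$ forces $\mu_i = \lambda_i$ for all $i \leq n-1$ and $\lambda_n = 0$; hence either $\ell(\lambda) \leq n-1$ and the only surviving term is $\mu = \lambda$, yielding $S^{\lambda}\bC^{n-1}$, or $\ell(\lambda)=n$ and no summand survives, yielding $0 = S^{\lambda}\bC^{n-1}$ by the standard convention. Both cases give the claim.

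As a sanity check, the reformulation from the preceding remark reaches the same conclusion almost immediately: $\mathfrak{Res}_{n-1,n} = \bigoplus_d \deg_d \circ \mathrm{Res}^{\gl_n}_{\gl_{n-1}}$, the module $S^{\lambda}\bC^n$ has degree $|\lambda|$, and the degree-$|\lambda|$ part of its $\gl_{n-1}$-restriction singles out exactly $\mu=\lambda$. The only small item to verify is that the two descriptions of $\mathfrak{Res}_{n-1,n}$ agree: this follows from the identity $\mathrm{id}_{\bC^n} = \mathrm{id}_{\bC^{n-1}} + E_{nn}$ in $\gl_n$, since on $\gl_{n-1}^{\perp}$-invariants inside a degree-$d$ module we have $E_{nn}=0$, forcing $\mathrm{id}_{\bC^{n-1}}$ to act by $d$. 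There is no serious obstacle in this lemma; the only minor care is the edge case $\ell(\lambda) = n$, handled automatically by the convention $S^{\lambda}\bC^{n-1}=0$ when $\ell(\lambda) > n-1$.
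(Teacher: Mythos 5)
Your proof is correct. The paper itself does not spell out a proof of this lemma (it defers to \cite[Section~7]{EA1}), but your argument via the Gelfand--Tsetlin branching rule together with the observation that $\gl_{n-1}^{\perp}=\bC E_{nn}$ and the $E_{nn}$-eigenvalue on the $S^{\mu}\bC^{n-1}$-summand is $|\lambda|-|\mu|$ is exactly the style of argument the paper uses in the analogous Lemmas \ref{lem:Res_image_Vermas} and \ref{lem:Res_image_simples}, including the edge case $\ell(\lambda)=n$.
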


Moreover, these functors are compatible with the functors $\Gamma_n$ defined before. 
\begin{lemma}\label{lem:Gamma_res_compat}
 For any $n \geq 1$, we have a commutative diagram:
 $$\xymatrix{&Rep(\gl_{\infty})_{poly}  \ar[r]^{\Gamma_n} \ar[rd]_{\Gamma_{n-1}} &Rep(\gl_n)_{poly} \ar[d]^{\InnaD{\mathfrak{Res}}_{n-1, n}} \\ &{} &Rep(\gl_{n-1})_{poly} }$$
 That is, there is a natural isomorphism $\Gamma_{n-1}  \cong \InnaD{\mathfrak{Res}}_{n-1, n} \circ \Gamma_n $.
\end{lemma}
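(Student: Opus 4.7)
The plan is to construct a canonical natural transformation $\eta: \Gamma_{n-1} \Rightarrow \mathfrak{Res}_{n-1,n} \circ \Gamma_n$ and then show it is an isomorphism by checking it on the simple objects $S^{\lambda} \bC^{\infty}$. Since both functors are additive between semisimple categories and are compatible with the decomposition of any object into its isotypic components, an isomorphism on simples will suffice.

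For the construction, I observe that as Lie subalgebras of $\gl_{\infty}$ we have $\gl_n^{\perp} \subset \gl_{n-1}^{\perp}$ (a matrix supported in rows and columns of index $\geq n+1$ is a fortiori supported in rows and columns of index $\geq n$), whence $\Gamma_{n-1}(M) = M^{\gl_{n-1}^{\perp}} \subseteq M^{\gl_n^{\perp}} = \Gamma_n(M)$ for every $M \in Rep(\gl_{\infty})_{poly}$. The matrix unit $E_{nn}$ also belongs to $\gl_{n-1}^{\perp}$, so $\Gamma_{n-1}(M)$ is already contained in the $E_{nn}$-invariants of $\Gamma_n(M)$; combined with the identification $\gl_{n-1}^{\perp} \cap \gl_n = \bC E_{nn}$ (inside $\gl_n$), this gives a canonical inclusion
$$\eta_M : \Gamma_{n-1}(M) \hookrightarrow (\Gamma_n(M))^{\bC E_{nn}} = \mathfrak{Res}_{n-1,n}(\Gamma_n(M)),$$
which is tautologically natural in $M$ since it is simply the restriction of $\mathrm{id}_M$ to an invariant subspace.

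To verify $\eta_M$ is an isomorphism for $M = S^{\lambda} \bC^{\infty}$, I would use the basis of semistandard Young tableaux: by Lemma \ref{lem:Gamma_is_tensor}, $\Gamma_n(M) = S^{\lambda} \bC^n$ is spanned by SSYT of shape $\lambda$ with entries in $\{1,\dots,n\}$, and since $E_{nn}$ acts diagonally with eigenvalue equal to the number of entries equal to $n$ in a tableau, the $E_{nn}$-invariants form exactly the span of tableaux with entries in $\{1,\dots,n-1\}$, i.e.\ $S^{\lambda} \bC^{n-1}$. On the other hand, $\Gamma_{n-1}(M) = S^{\lambda} \bC^{n-1}$ by Lemma \ref{lem:Gamma_is_tensor}, realized inside $S^{\lambda} \bC^{\infty}$ as the same subspace. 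Thus $\eta_M$ is the identity map on $S^{\lambda} \bC^{n-1}$, and in particular an isomorphism. (Alternatively, one may observe that $\eta_M$ is injective by construction and both sides are abstractly isomorphic to $S^{\lambda} \bC^{n-1}$ by Lemmas \ref{lem:Gamma_is_tensor} and \ref{lem:res_func_simples}, so a $\gl_{n-1}$-character comparison forces equality.)

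I do not expect any serious obstacle here: the entire content of the lemma reduces to the identity $(M^{\gl_n^{\perp}})^{\bC E_{nn}} = M^{\gl_{n-1}^{\perp}}$ for polynomial $M$, which is immediate on simples through the SSYT picture and propagates to arbitrary polynomial representations by additivity.
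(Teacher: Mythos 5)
Your proof is correct. Note that the paper does not actually provide a proof of this lemma --- it merely states it, deferring to \cite[Section 7]{EA1} for details --- so there is no in-paper argument to compare against. Your approach is the natural one: build the comparison map as an honest inclusion of invariant subspaces (using $\gl_n^{\perp}\subset\gl_{n-1}^{\perp}$ and $\gl_{n-1}^{\perp}\cap\gl_n=\bC E_{nn}$ inside $\gl_\infty$), and then verify it is an isomorphism on the simple objects $S^{\lambda}\bC^{\infty}$, where Lemmas \ref{lem:Gamma_is_tensor} and \ref{lem:res_func_simples} already identify both sides with $S^{\lambda}\bC^{n-1}$; semisimplicity and additivity then finish the job. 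The SSYT computation making $\eta_M$ literally the identity on the common subspace spanned by tableaux with entries in $\{1,\dots,n-1\}$ is a clean way to see that the abstract isomorphism is realized by the inclusion you constructed, rather than merely existing for dimension reasons. One minor point worth flagging: the crux of the argument is really the equality $M^{\gl_{n-1}^{\perp}}=(M^{\gl_n^{\perp}})^{\bC E_{nn}}$ for polynomial $M$, and it is worth being explicit that this is \emph{not} a formal consequence of Lie-algebra generation (indeed $\gl_n^{\perp}$ and $E_{nn}$ commute and only span a proper subalgebra of $\gl_{n-1}^{\perp}$); it holds here precisely because you checked it on simples and used semisimplicity, exactly as you wrote.
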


In particular, this implies: 
\begin{corollary}\label{cor:res_func_tensor}
 The functors $\InnaD{\mathfrak{Res}}_{n-1, n}:Rep(\gl_{n})_{poly} \rightarrow Rep(\gl_{n-1})_{poly}$ are symmetric monoidal functors.
\end{corollary}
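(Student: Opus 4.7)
The plan is to deduce strong monoidality of $\mathfrak{Res}_{n-1,n}$ from the monoidality of $\Gamma_n, \Gamma_{n-1}$ (Lemma \ref{lem:Gamma_is_tensor}) together with the compatibility $\Gamma_{n-1} \cong \mathfrak{Res}_{n-1,n} \circ \Gamma_n$ (Lemma \ref{lem:Gamma_res_compat}), reducing the verification to pairs of simple objects by semisimplicity of $Rep(\gl_n)_{poly}$.

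First I would observe that the invariant-taking functor $M \mapsto M^{\gl_{n-1}^{\perp}}$ always carries a canonical lax symmetric monoidal structure: the inclusion of tensor products of invariants into invariants of tensor products gives a natural transformation
$$\mu_{M,N} : \mathfrak{Res}_{n-1,n}(M) \otimes \mathfrak{Res}_{n-1,n}(N) \longrightarrow \mathfrak{Res}_{n-1,n}(M \otimes N),$$
and the unit constraint $\bC \to \mathfrak{Res}_{n-1,n}(\bC)$ is tautological. All coherence (associativity, unit, symmetry) axioms for $(\mathfrak{Res}_{n-1,n}, \mu)$ follow automatically from those of the ambient category. Thus the only content is to show that each $\mu_{M,N}$ is an isomorphism.

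Since both $Rep(\gl_n)_{poly}$ and $Rep(\gl_{n-1})_{poly}$ are semisimple abelian and $\mathfrak{Res}_{n-1,n}$ is additive, it suffices to check this on pairs of simples $M = S^{\lambda}\bC^n$, $N = S^{\mu}\bC^n$ (the case where the length of $\lambda$ or $\mu$ exceeds $n$ is vacuous). Setting $\widetilde{M} = S^{\lambda}\bC^{\infty}$ and $\widetilde{N} = S^{\mu}\bC^{\infty}$, Lemma \ref{lem:Gamma_is_tensor} gives $\Gamma_n(\widetilde{M}) \cong M$ and $\Gamma_n(\widetilde{N}) \cong N$, and the chain
\begin{align*}
\mathfrak{Res}_{n-1,n}(M) \otimes \mathfrak{Res}_{n-1,n}(N) &\cong \Gamma_{n-1}(\widetilde{M}) \otimes \Gamma_{n-1}(\widetilde{N}) \\
&\cong \Gamma_{n-1}(\widetilde{M} \otimes \widetilde{N}) \\
&\cong \mathfrak{Res}_{n-1,n}\bigl(\Gamma_n(\widetilde{M} \otimes \widetilde{N})\bigr) \\
&\cong \mathfrak{Res}_{n-1,n}(M \otimes N)
\end{align*}
combines Lemmas \ref{lem:Gamma_is_tensor} and \ref{lem:Gamma_res_compat}. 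Each intermediate isomorphism is itself built from an invariant-inclusion, so the composite realizes $\mu_{M,N}$; hence $\mu_{M,N}$ is an isomorphism on simples and therefore on all objects.

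The main expected obstacle is the final piece of bookkeeping: identifying the abstract composite above with the canonical comparison $\mu_{M,N}$ rather than with some twist by an automorphism of a simple object. This amounts to a coherence check, which reduces to the fact that the monoidal structure maps on $\Gamma_n$ and $\Gamma_{n-1}$ are themselves given by invariant-inclusions, and that Lemma \ref{lem:Gamma_res_compat} identifies the nested invariants $(M^{\gl_n^{\perp}})^{\gl_{n-1}^{\perp}}$ with $M^{\gl_{n-1}^{\perp}}$; once these identifications are unwound no further verification is required.
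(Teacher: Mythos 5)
Your proof is correct and follows the same route the paper implicitly relies on: the paper offers no written argument, merely prefacing the corollary with ``In particular, this implies:'' after Lemmas \ref{lem:Gamma_is_tensor} and \ref{lem:Gamma_res_compat}, which is exactly the deduction you carry out. As a minor simplification, the final coherence check can be avoided: once the chain of isomorphisms shows that $\mathfrak{Res}_{n-1,n}(M)\otimes\mathfrak{Res}_{n-1,n}(N)$ and $\mathfrak{Res}_{n-1,n}(M\otimes N)$ are abstractly isomorphic (hence have the same finite length), the canonical $\mu_{M,N}$, being an injection of invariants between objects of equal finite length in a semisimple category, is automatically an isomorphism, so there is no need to identify the composite with $\mu_{M,N}$.
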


\subsection{The restricted inverse limit of categories \texorpdfstring{$Rep(\gl_n)_{poly}$}{of polynomial representations}}\label{ssec:Stab_inv_lim_rep_poly}
This subsection gives a description of the category $Rep(\gl_{\infty})_{poly}$ as a ``restricted'' inverse limit of categories $Rep(\gl_n)_{poly}$ (\InnaD{see \cite{EA1} for details}). 

We will use the \InnaD{framework developed in \cite{EA1}} for the inverse limits of categories \InnaE{with} \InnaD{$\bZ_+$-filtrations on objects}, and the restricted inverse limits of finite-length categories \InnaG{(abelian categories in which every object admits a Jordan-Holder filtration)}. The notions of $\bZ_+$-filtered functors and shortening functors are defined {\it loc. cit.}

We define a $\bZ_+$-filtration on \InnaD{the objects of} $Rep(\gl_n)_{poly}$ for each $n \in \bZ_+$:
\begin{notation}
 For each $k \in \bZ_+$, let $\InnaD{Fil_k(Rep(\gl_n)_{poly})}$ be the full additive subcategory of $Rep(\gl_n)_{poly}$ generated by $S^{\lambda} \bC^n$ such that $\ell(\lambda) \leq k$. 
\end{notation}
Clearly the subcategories $\InnaD{Fil_k(Rep(\gl_n)_{poly})}$ give us a $\bZ_+$-filtration \InnaD{on the objects of} the category $Rep(\gl_n)_{poly}$. \InnaD{Furthermore,} by Lemma \ref{lem:res_func_simples}, the functors $ \InnaD{\mathfrak{Res}}_{n-1, n}$ are $\bZ_+$-filtered functors, \InnaD{i.e. they induce functors $$ \InnaD{\mathfrak{Res}}_{n-1, n}^k: Fil_k(Rep(\gl_n)_{poly}) \longrightarrow Fil_k(Rep(\gl_{n-1})_{poly})$$}

This allows us to consider the inverse limit $$\varprojlim_{n \in \bZ_+, \bZ_+-filtr} Rep(\gl_n)_{poly} \InnaD{\cong \varinjlim_{ k \in \bZ_+} \varprojlim_{n \in \bZ_+} Fil_k(Rep(\gl_n)_{poly})}$$ This is an abelian category (with a natural $\bZ_+$-filtration on objects).

Note that by Lemma \ref{lem:res_func_simples}, the functors $\InnaD{\mathfrak{Res}}_{n-1, n}$ are shortening functors; futhermore, the system $((Rep(\gl_n)_{poly})_{n \in \bZ_+}, (\InnaD{\mathfrak{Res}}_{n-1, n})_{n \geq 1})$ satisfies the conditions \InnaD{listed in} \cite[Section 6]{EA1}, and therefore the \InnaD{category $\varprojlim_{n \in \bZ_+, \bZ_+-filtr} Rep(\gl_n)_{poly}$ is also \InnaE{equivalent to} the} restricted inverse limit \InnaD{of this system, $\varprojlim_{n \in \bZ_+, \text{ restr}}  Rep(\gl_{n})_{poly} $}.
%(c.f. \cite[Theorem \Inna{insert ref}]{EA1}).

\begin{remark}
 The functors $\InnaD{\mathfrak{Res}}_{n-1, n}$ are symmetric monoidal functors, so the category $\varprojlim_{n \in \bZ_+, \text{ restr}}  Rep(\gl_{n})_{poly} $ is a symmetric monoidal category.
\end{remark}

\begin{proposition}\label{prop:inv_lim_cat_poly_rep}
 We have an equivalence of symmetric monoidal Karoubian categories 
 $$\Gamma_{\text{lim}}: Rep(\gl_{\infty})_{poly} \longrightarrow \varprojlim_{n \in \bZ_+, \text{ restr}}  Rep(\gl_{n})_{poly} $$
 induced by the symmetric monoidal functors 
 $$\Gamma_n =  ( \cdot )^{\gl_n^{\perp}}:  Rep(\gl_{\infty})_{poly} \longrightarrow  Rep(\gl_{n})_{poly}$$
\end{proposition}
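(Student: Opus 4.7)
\textbf{Proof plan for Proposition \ref{prop:inv_lim_cat_poly_rep}.}
The plan is to verify that $\Gamma_{\text{lim}}$ is well-defined, then to identify simple objects and Hom-spaces on both sides and check that $\Gamma_{\text{lim}}$ matches them up. First, Lemma \ref{lem:Gamma_res_compat} provides the compatibility isomorphisms $\mathfrak{Res}_{n-1,n}\circ\Gamma_n\cong\Gamma_{n-1}$ required to define $\Gamma_{\text{lim}}$ as a functor to the inverse limit. Since any $M\in Rep(\gl_\infty)_{poly}$ is a finite direct sum of simples $S^\lambda\bC^\infty$, the Young diagrams appearing have bounded length $k$, and Lemma \ref{lem:Gamma_is_tensor} gives $\Gamma_n(M)\in Fil_k(Rep(\gl_n)_{poly})$ for every $n$; this places $\Gamma_{\text{lim}}(M)$ in the restricted inverse limit. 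Monoidality of $\Gamma_{\text{lim}}$ follows from monoidality of each $\Gamma_n$ (Lemma \ref{lem:Gamma_is_tensor}) together with Corollary \ref{cor:res_func_tensor}.

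Next, I would identify the simple objects of $\varprojlim_{n,\text{restr}}Rep(\gl_n)_{poly}$. For each Young diagram $\lambda$, the family $L(\lambda):=(S^\lambda\bC^n)_{n\geq 0}$ (with $S^\lambda\bC^n:=0$ whenever $\ell(\lambda)>n$) is an object of $Fil_{\ell(\lambda)}$, using Lemma \ref{lem:res_func_simples} to check compatibility under the restriction functors. Conversely, since each $Rep(\gl_n)_{poly}$ is semisimple and $\mathfrak{Res}_{n-1,n}$ is shortening with known behaviour on simples, the restricted inverse limit is semisimple, its simples are precisely the $L(\lambda)$'s, and every object is a \emph{finite} direct sum of such $L(\lambda)$'s: the multiplicities $m_{\lambda,n}$ stabilize in $n$ by the shortening property, and each $\Gamma_n$-component is already a finite direct sum of simples by the definition of $Rep(\gl_n)_{poly}$. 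Combining with the previous paragraph, $\Gamma_{\text{lim}}(S^\lambda\bC^\infty)\cong L(\lambda)$, so $\Gamma_{\text{lim}}$ induces a bijection on isomorphism classes of simple objects, which together with semisimplicity gives essential surjectivity.

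For full faithfulness, it suffices by semisimplicity to compute Hom-spaces between simples. On the source, $\Hom(S^\lambda\bC^\infty,S^\mu\bC^\infty)=\delta_{\lambda,\mu}\bC$. On the target, a morphism $L(\lambda)\to L(\mu)$ is a compatible system $(f_n)_n$ with $f_n\in\Hom(S^\lambda\bC^n,S^\mu\bC^n)$; for $n\geq\max(\ell(\lambda),\ell(\mu))$ this Hom-space equals $\delta_{\lambda,\mu}\bC$, and $\mathfrak{Res}_{n-1,n}$ acts as the identity on scalars (it sends the chosen identification $S^\lambda\bC^n\cong S^\lambda\bC^n$ to $S^\lambda\bC^{n-1}\cong S^\lambda\bC^{n-1}$), so the compatible system is determined by a single scalar. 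Thus $\Hom(L(\lambda),L(\mu))=\delta_{\lambda,\mu}\bC$, and $\Gamma_{\text{lim}}$ acts as the identity on these one-dimensional Hom-spaces when $\lambda=\mu$, completing full faithfulness.

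The main potential obstacle is confirming that the restricted inverse limit really is semisimple and that its objects are finite direct sums of simples, since these statements depend on the shortening-functor framework of \cite{EA1} rather than on elementary constructions. Once that machinery is invoked, however, the argument reduces to the observation that for each fixed $k$ the functors $\mathfrak{Res}_{n-1,n}^k$ are bijections on simples for $n>k$, hence equivalences by semisimplicity, so $\varprojlim_n Fil_k(Rep(\gl_n)_{poly})$ is equivalent to $Fil_k(Rep(\gl_m)_{poly})$ for any $m\geq k$---a semisimple category with only finitely many simples of any given degree. Passing to the colimit over $k$ produces the desired equivalence, and monoidality is preserved at every stage.
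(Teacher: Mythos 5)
Your proof is correct, but it takes a different route from the paper. The paper defers the proof of this proposition to \cite[Section 7]{EA1}, and the analogous argument it does spell out (Proposition \ref{prop:inv_lim_gl_infty_par_cat}, for the parabolic category $\co$) proceeds by \emph{constructing an explicit quasi-inverse} $\Gamma_{\text{lim}}^*$ sending a compatible system $((M_n)_n,(\phi_{n-1,n})_n)$ to the union $\bigcup_n M_n$, then verifying that the unit and counit of the adjunction are isomorphisms. Your argument instead exploits semisimplicity from the start: you classify the simple objects of the restricted inverse limit as the compatible families $L(\lambda) = (S^\lambda\bC^n)_n$, match them and their one-dimensional Hom-spaces against those of $Rep(\gl_\infty)_{poly}$ via Lemma \ref{lem:Gamma_is_tensor}, and give a clean stabilization observation (that $\mathfrak{Res}_{n-1,n}^k$ is an equivalence for $n > k$, so $\varprojlim_n Fil_k \simeq Fil_k(Rep(\gl_m)_{poly})$ for $m \geq k$) to conclude. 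Your approach is shorter and self-contained for this semisimple setting, and the stabilization observation in your last paragraph in fact bypasses most of the abstract shortening-functor machinery; the paper's union-based construction is more laborious here but is exactly the technique that scales to the non-semisimple case in Section \ref{ssec:inv_lim_par_cat_O}, which is presumably why it was preferred. One small point worth making explicit in your argument: the identification $\Gamma_{\text{lim}}(S^\lambda\bC^\infty)\cong L(\lambda)$ requires checking that the natural transformations of Lemma \ref{lem:Gamma_res_compat} restrict, on the simple $S^\lambda\bC^\infty$, to the chosen structure isomorphisms $\phi_{n-1,n}$ in the object $L(\lambda)$; this is routine but is the one place where the coherence of the compatibility data actually enters.
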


\section{Parabolic category \texorpdfstring{$\co$}{O}}\label{sec:par_cat_o}

In this section, we describe a version of the parabolic category $\co$ for $\gl_N$ which we are going to work with. We will give a definition which will describe both the relevant category for $\gl_n$, and for $\gl_{\infty}$. 
\subsection{}
For the benefit of the reader, we will start \InnaD{by} giving a definition for $\gl_N$ when $N$ \InnaD{is a positive integer}; this definition is analogous to the usual definition of the category $\co$. The generic definition will then be just a \InnaD{slight modification} of the first \InnaD{to accomodate the case $N = \infty$}.

% In this section, we present another view of the correspondence between irreducible representations of $\mathfrak{gl}(V)$ and Specht modules which was given by Theorem \ref{thrm:class_Schur_Weyl}.

%We start with some definitions.

% The goal of this section is to give a description of the classical Schur-Weyl duality between irreducible representations of $\mathfrak{gl}(V)$ and Specht modules in terms of the parabolic category $\co$ for $(\gl(V), \mathfrak{p})$.
\InnaD{This version of the} parabolic category $\co$ is attached to a pair: a vector space $V$ and a fixed non-zero vector $\triv$ in it. Such a pair is called a {\it unital vector space}. In our case, we will just consider $V = \bC^N$, with the standard basis $e_1, e_2, ...$, and the chosen vector $\triv:= e_1$.

%Fix a unital vector space $(V, \InnaB{\triv})$ with $\dim V  <\infty$.
\InnaE{Fix $N \in \bZ$, $N \geq 1$.}

The following notation will be used throughout the paper:
\begin{notation}\label{notn:par_subalg}
\mbox{}
\begin{itemize}
 \item \InnaB{We denote by $\mathfrak{p}_{N}\subset \gl_N$ the parabolic Lie subalgebra which consists of} all the endomorphisms $\phi: \bC^N \rightarrow \bC^N$ for which $\InnaB{\phi(\triv) \in \InnaB{\bC \triv} }$. In terms of matrices this is $span\{E_{1,1}, E_{i, j} \rvert j>1 \}$. 
 %We will write $\mathfrak{p}_N:=\mathfrak{p}_{(\bC^N, \InnaB{\bC \triv})}$ for short.
 %\item \InnaB{$\bar{\mathfrak{p}}_{\InnaB{\bC \triv}} \subset \mathfrak{p}$ denotes the algebra of endomorphisms $\phi: \bC^N \rightarrow \bC^N$ for which $\phi(\triv) =0$ (thus $\bar{\mathfrak{p}}_{\InnaB{\bC \triv}} = Lie(\bar{\mathfrak{P}}_{\triv})$).}
 %\item \InnaB{$\mathfrak{U}_{\triv}$ denotes the subgroup of $\bar{\mathfrak{P}}_{\triv}$ of automorphisms $\Phi: \bC^N \rightarrow \bC^N$ for which $Im(\Phi - \id_{\bC^N}) \subset \bC \triv$.}
 \item $\mathfrak{u}_{\mathfrak{p}_N}^{+} \subset \mathfrak{p}_N$ denotes the algebra of endomorphisms $\phi: {\bC^N} \rightarrow {\bC^N}$ for which $\Im \phi \subset \InnaB{\bC \triv} \subset \Ker \phi$. In terms of matrices, $\mathfrak{u}_{\mathfrak{p}_N}^{+} = span \{E_{1, j} \rvert j>1 \}$.
 %(thus $\mathfrak{u}_{\mathfrak{p}}^{+} = Lie(\mathfrak{U}_{\triv})$). }
\end{itemize}
\end{notation}

Denote $U_N := span \{e_2, e_3, ..., e_N \}$. 

We have a splitting $\gl_N \cong \mathfrak{p}_N \oplus \mathfrak{u}_{\mathfrak{p}_N}^{-}$, where $\mathfrak{u}_{\mathfrak{p}_N}^{-} \cong U_N = span \{E_{i, 1} \rvert i >1 \}$). This gives us an analogue of the triangular decomposition:
$$\gl_N \cong \bC   \id_{\bC^N} \oplus \mathfrak{u}_{\mathfrak{p}_N}^{-} \oplus \mathfrak{u}_{\mathfrak{p}_N}^{+}  \oplus \gl(U_N)$$

We can now give a precise definition of the parabolic category $\co$ which we will use:
%(compare with the usual definition in \cite[Section 9.3]{H}):
\begin{definition}\label{def:par_cat_O_splitting}
 We define the category $\co^{\mathfrak{p}_N}_{\bC^N}$ to be the full subcategory of $Mod_{\mathcal{U}(\gl_N)}$ whose objects $M$ satisfy the following conditions:
 \begin{itemize}
 \item Viewed as a $\mathcal{U}(\gl(U_N))$-module, $M$ is a direct sum of polynomial $\mathcal{U}(\gl(U_N))$-modules (that is, $M$ belongs to $Ind-Rep(\gl(U_N))_{poly}$).
  \item $M$ is locally finite over $\mathfrak{u}_{\mathfrak{p}_N}^{+}$.
  \item \InnaA{$M$ is a finitely generated $\mathcal{U}(\gl_N)$-module.}
 \end{itemize}
% \begin{remark}
%  Notice that $\co^{\mathfrak{p}}_{V} \cong Ind-\co^{\mathfrak{p}}_{V, f.g.}$ as abelian categories.
% \end{remark}
\end{definition}
\begin{remark}
 One can replace the requirement that $\mathfrak{u}_{\mathfrak{p}_N}^{+}$ act locally finitely on $M$ by the requirement that $\mathcal{U}(\mathfrak{u}_{\mathfrak{p}_N}^{+})$ act locally nilpotently on $M$.
 %, or by the requirement that $M$ integrates to a representation of $GL(U) \ltimes U^*$}. 	
 %Harish-Chandra modules for $(\gl(V), GL(U) \ltimes U^*)$.
\end{remark}

\begin{remark}
 One can, in fact, give an equivalent definition of the category $\co^{\mathfrak{p}}_{V}$ corresponding to a finite-dimensional unital vector $(V, \triv)$ without choosing a splitting(c.f. \cite[Section 5]{EA} and Introduction \ref{sec:intro}).
\end{remark}

%\Inna{ change next part?}
\InnaD{We now recall the following definition:}
\begin{definition}\label{def:degree_gl_module}
 A module $M$ over the Lie algebra $\gl_N$ will be said to be of \InnaA{degree} $K \in \bC$ if $\id_{\bC^N} \in \gl_N$ acts by $K   \id_M$ on $M$.
\end{definition}

We will denote by $\co^{\mathfrak{p}_N}_{\nu, \bC^N}$ the full subcategory of $\co^{\mathfrak{p}_N}_{\bC^N}$ whose objects are modules of \InnaA{degree} $\nu$. Note that for a module $M$ of $\co^{\mathfrak{p}_N}_{\bC^N}$ to be of degree $\nu$ is the same as to require that $E_{1,1}$ acts on each subspace $S^{\lambda} U_N$ of $M$ by the scalar $\nu - \abs{\lambda}$.

\begin{definition}\label{def:deg_functor}
Let $\nu \in \bC$. Define the functor $deg_{\nu}:Mod_{\mathcal{U}(\gl_N)} \rightarrow Mod_{\mathcal{U}(\gl_N)}$ by putting $deg_{\nu}(E)$ to be the maximal submodule of $E$ of \InnaA{degree} $\nu$ (see Definition \ref{def:degree_gl_module}). For a morphism $f: E \rightarrow E'$ of $\gl_N$-modules, we put $deg_{\nu}(f):= f\rvert_{deg_{\nu}(E)}$.
\end{definition}

Let $E \in Mod_{\mathcal{U}(\gl_N)}$. The maximal submodule of $E$ of \InnaA{degree} $\nu$ is well-defined: it is the subspace of $E$ consisting of all vectors on which $\id_{\bC^N}$ acts by the scalar $\nu$, and it is a $\gl_N$-submodule since $\id_{\bC^N}$ lies in the center of $\gl_N$.

One can show that the functor $deg_{\nu}:Mod_{\mathcal{U}(\gl_N)} \rightarrow Mod_{\mathcal{U}(\gl_N)}$ is left-exact. Moreover, it is easy to show that the category $\co^{\mathfrak{p}_N}_{\nu, \bC^N}$ is a direct summand of $\co^{\mathfrak{p}_N}_{\bC^N}$, and the functor $deg_{\nu}:\co^{\mathfrak{p}_N}_{\bC^N} \rightarrow \co^{\mathfrak{p}_N}_{\nu, \bC^N}$ is exact.

\subsection{Parabolic category \texorpdfstring{$\co$ for $\gl_{N}$}{O for the Lie algebra gl}}\label{ssec:def_par_cat_O_gl_N}
We now give a definition of the parabolic category $\co$ which for $\gl_N$, $N$ being either a positive integer or infinity.

\mbox{}

Again, we let $N \in \bZ_{\geq 1} \cup \{\infty \}$. 

Consider a unital vector space $(\bC^N, \triv)$, where $\triv:=e_1$. Put $U_{N}:=span_{\bC}(e_2, e_3, ...) \subset \bC^N$, so that we have a splitting $\bC^N = \bC e_1 \oplus U_N$. We will also denote $U_{N, *} := span (e_2^*, e_3^*, ...)$ (so $U_{N, *} = U_N^*$ whenever $N \in \bZ$).

We have a decomposition $$\gl_{N}  \cong \gl(U_N) \oplus \gl_1  \oplus \mathfrak{u}_{\mathfrak{p}_N}^{+}  \oplus \mathfrak{u}_{\mathfrak{p}_N}^{-}$$
Of course, for any $N$, $\mathfrak{u}_{\mathfrak{p}_N}^{-} \cong U_N$; moreover, $\mathfrak{u}_{\mathfrak{p}_N}^{+}  \cong U_{N, *}$.

We will also use the isomorphisms $\gl(U_N ) \cong \gl^{\perp}_1 \cong \gl_{N-1}$.

\begin{definition}\label{def:par_cat_O_general_1}
\mbox{}
\begin{itemize}[leftmargin=*]
%  \item The category of representations of the Lie algebra $\gl_{\infty}$ in $Ind-\mathcal{C}$, denoted by $Mod_{\gl_{\infty}}$, is defined to be the category of pairs $(M, \rho_M)$ where $M \in Ind-\mathcal{C}$, and $\rho_M: \gl_{\infty} \otimes M \rightarrow M$ is a morphism in $Ind-\mathcal{C}$ such that $$\rho_M \circ([ \cdot, \cdot] \otimes \id_M) = \rho_M \circ (\id_{\gl_{\infty}} \otimes \rho_M) \circ ( \id_{\gl_{\infty} \otimes \gl_{\infty} \otimes M} - \sigma_{\gl_{\infty}, \gl_{\infty}} \otimes \id_M)$$ 
%  
%  The morphisms in this category would be 
%  $$ \Hom_{Mod_{\gl_{\infty}}} ((M, \rho_M), (N, \rho_N)) := \{\phi \in \Hom_{Ind-\mathcal{C}}(M, N) : \phi \circ \rho_M = \rho_N \circ \phi \}$$
\item Define the category $Mod_{\gl_{N}, \gl(U_N) -poly}$ to be the category of $\gl_{N}$-modules whose restriction to $\gl(U_N)$ lies in $Ind-Rep(\gl_{U_N})_{poly}$; that is, $\gl_{N}$-modules whose restriction to $\gl(U_N)$ is a (perhaps infinite) direct sum of Schur functors applied to $U_N$. 

The morphisms would be $\gl_{N}$-equivariant maps. 

\mbox{}

\item
 We say that an object $M \in Mod_{\gl_{N}, \gl(U_N) -poly}$ is of {\it degree} $\nu$ ($\nu \in \bC$) if on every summand $S^{\lambda} U_N \InnaD{\subset M}$, the element $E_{1,1} \in \gl_{N}$ acts by $(\nu - \abs{\lambda}) \id_{S^{\lambda} U_N}$. 
 
 \mbox{}
 
 \item Let $M \in Mod_{\gl_{N}, \gl(U_N) -poly}$. We have a commutative algebra $Sym(U_N) \cong \mathcal{U}(\mathfrak{u}_{\mathfrak{p}_N}^{-})$ (the enveloping algebra of $\mathfrak{u}_{\mathfrak{p}_N}^{-} \subset \gl_{N}$). The action of $\gl_N$ on $M$ gives $M$ a structure of a $Sym(U_N)$-module. 
 
 We say that $M$ is {\it finitely generated} over $Sym(U_N)$ if $M$ is a quotient of a ``free finitely-generated $Sym(U_N)$-module''; that is, as a $Sym(U_N)$-module, $M$ is a quotient (in $Ind-Rep(\gl_{N})_{poly}$) of $Sym(U_N) \otimes E$ for some $E \in Rep(\gl \InnaD{(U_{N})})_{poly}$.
 
 \mbox{}
 
 \item Let $M \in Mod_{\gl_{N}, \gl(U_N) -poly}$. We have a commutative algebra $Sym(U_{N, *}) \cong \mathcal{U}(\mathfrak{u}_{\mathfrak{p}_N}^{+})$ (the enveloping algebra of $\mathfrak{u}_{\mathfrak{p}_N}^{+} \subset \gl_{N}$). The action of $\gl_N$ on $M$ gives $M$ a structure of a $Sym(U_{N, *})$-module.  
 
 We say that $M$ is {\it locally nilpotent} over the \InnaD{algebra $\mathcal{U}(\mathfrak{u}_{\mathfrak{p}_N}^{+})$} if for any $v \in M$, there exists $m \geq 0$ such that \InnaD{for any $A \in Sym^m (U_{N, *})$ we have:} $A.v =0$.
%i.e. 

 %be polynomial with respect to $\gl(\uinf)$. We say that 
\end{itemize}
 
\end{definition}

Recall the natural $\bZ_+$-grading on the object of $Ind-Rep(\gl_{N})_{poly}$. 

For each $M \in Mod_{\gl_{N}, \gl(U_N) -poly}$, the above definition implies: $\gl(U_N)$ acts by operators act by operators of degree zero, $U_{N, *}$ acts by operators of degree $1$.
%\Inna{insert ref}
% 
% \begin{corollary}
%  If $M  \in Mod_{\gl_{N}, \gl(U_N) -poly}$ is finitely generated over $Sym(U_N)$, then $gr_k(M)$ lies in $Rep(\gl_{N})_{poly}$ for any $k \geq 0$. \Inna{need to explain?}
% \end{corollary}
We now define the parabolic category $\co$ for $\gl_N$ which we will use throughout the paper: 
\begin{definition}\label{def:par_cat_O_gl_N}
 We define the category $\co^{\mathfrak{p}_{N}}_{\nu, \bC^{N}}$ to be the full subcategory of $Mod_{\gl_{N}, \gl(U_N) -poly}$ whose objects $M$ satisfy the following requirements:
 \begin{itemize}
  \item $M$ is of degree $\nu$.
  \item $M$ is finitely generated over $Sym(U_N)$.
 \item $M$ is locally nilpotent over the \InnaD{algebra $\mathcal{U}(\mathfrak{u}_{\mathfrak{p}_N}^{+})$}.
 \end{itemize}

\end{definition}

Of course, for \InnaE{a positive integer $N$}, this is just the category $\co^{\mathfrak{p}_{N}}_{\nu, \bC^{N}}$ we defined in the beginning of this section.

\mbox{}

We will also consider the localization of the category $\co^{\mathfrak{p}_{N}}_{\nu, \bC^{N}}$ by its Serre subcategory of polynomial $\gl_N$-modules of \InnaD{degree} $\nu$; such modules exist iff $\nu \in \bZ_+$. This localization will be denoted by 
 $$\hat{\pi}_N: {\co}^{\mathfrak{p}_{N}}_{\nu, \bC^{N}} \longrightarrow \widehat{\co}^{\mathfrak{p}_{N}}_{\nu, \bC^{N}} $$
and will play \InnaD{an important} role when we consider the Schur-Weyl duality in complex rank.

\subsection{Duality in category \texorpdfstring{$\co$}{O}}\label{ssec:duality_cat_O}

Let $n \in \bZ_+$.

Recall that in the category $\co$ for $\gl_n$ we have the notion of a duality (c.f. \cite[Section 3.2]{H}): namely, given a $\gl_n$-module $M$ with finite-dimensional weight spaces, we can consider the twisted action of $\gl_n$ on the dual space $M^*$, given by $A.f := f \circ A^{T}$, where $A^T$ means the transpose of $A \in \gl_n$. This makes $M^*$ a $\gl_n$-module. We then take $M^{\vee}$ to be the maximal submodule of $M^{*}$ lying in category $\co$. 

More explicitly, considering $M$ as a direct sum of its finite-dimensional weight spaces
$$M = \bigoplus_{\lambda} M_{\lambda}$$ we can consider the restricted twisted dual
$$M^{\vee} := \bigoplus_{\lambda} M_{\lambda}^*$$
(that is, we take the dual to each weight space separately). The action of $\gl_n$ is given by $A.f := f \circ A^{T}$ for any $A \in \gl_n$.

The module $M^{\vee}$ is called the dual of $M$, and we get an exact functor $(\cdot)^{\vee}: \co^{op} \rightarrow \co$. 

\begin{proposition}
 The category $\co^{\mathfrak{p}_n}_{\bC^n}$ is closed under taking duals, and the duality functor $(\cdot)^{\vee}: \left(\co^{\mathfrak{p}_n}_{\bC^n}\right)^{op} \rightarrow  \co^{\mathfrak{p}_n}_{\bC^n} $ is an equivalence of categories.
\end{proposition}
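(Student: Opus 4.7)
The plan is to verify the three defining conditions of Definition~\ref{def:par_cat_O_splitting} for $M^{\vee}$ whenever $M \in \co^{\mathfrak{p}_n}_{\bC^n}$, and then to check exactness and involutivity of $(\cdot)^{\vee}$ via the canonical evaluation map applied weight by weight.

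First I extract the finiteness structure of $M$: the three conditions together force $M$ to be a $\gl_n$-equivariant quotient of $Sym(U_n)\otimes E$ for some finite-dimensional polynomial $\gl(U_n)$-module $E$. Indeed, from a finite $\gl_n$-generating set of $M$, local $\mathfrak{u}^+_{\mathfrak{p}_n}$-nilpotency produces a finite-dimensional $\mathfrak{u}^+_{\mathfrak{p}_n}$-stable subspace, whose $\gl(U_n)$-orbit is again finite-dimensional (since the $\gl(U_n)$-submodule of a polynomial module generated by a finite-dimensional subspace is finite-dimensional), and a PBW decomposition $\mathcal{U}(\gl_n) \cong \mathcal{U}(\mathfrak{u}^-_{\mathfrak{p}_n})\otimes \mathcal{U}(\mathfrak{p}_n)$ finishes the rest. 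From this presentation I get: finite-dimensional weight spaces under the diagonal Cartan $\mathfrak{h}\subset\gl_n$; a weight set contained in $\Omega(E) - Q^+$, where $Q^+$ is the monoid generated by the positive roots $\epsilon_1 - \epsilon_j$ ($j>1$); and finite-dimensional multiplicity spaces $\mathrm{mult}_\mu$ in the decomposition $M = \bigoplus_\mu S^\mu U_n \otimes \mathrm{mult}_\mu$.

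Next I check the three conditions for $M^{\vee} = \bigoplus_\lambda M_\lambda^*$. Since the transpose restricts to an anti-involution of $\gl(U_n)$ under which each finite-dimensional irreducible $S^\mu U_n$ is self-dual, $M^{\vee} \cong \bigoplus_\mu S^\mu U_n \otimes \mathrm{mult}_\mu^*$ as a $\gl(U_n)$-module, which is polynomial. Local nilpotency of $\mathfrak{u}^+_{\mathfrak{p}_n}$ on $M^{\vee}$ is immediate from the upper bound on the weight set: iterated application of a fixed positive root vector pushes weights past every element of $\Omega(E)$, forcing the result to vanish. For finite generation over $\mathcal{U}(\gl_n)$, I invoke that every object of $\co^{\mathfrak{p}_n}_{\bC^n}$ has finite length, which follows from finite length of the parabolic Verma modules $Sym(U_n) \otimes S^\mu U_n$ (standard BGG theory for $\gl_n$) together with $M$ being a $\gl_n$-quotient of a finite direct sum of these. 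Since $(\cdot)^{\vee}$ is exact (dualization is exact on finite-dimensional weight spaces) and preserves simples (the contragredient of a simple highest-weight module is a simple highest-weight module of the same highest weight), it preserves composition length, so $M^{\vee}$ again has finite length and in particular is finitely generated.

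Finally, exactness of $(\cdot)^{\vee}$ is formal, and the canonical evaluation map $M \to M^{\vee\vee}$ is $\gl_n$-equivariant and restricts to the usual isomorphism on each finite-dimensional weight space, furnishing a natural isomorphism $\mathrm{id} \cong (\cdot)^{\vee\vee}$ and completing the anti-equivalence. The main technical obstacle is the finite-generation condition for $M^{\vee}$: weight-by-weight dualization does not manifestly produce a finite $\gl_n$-generating set, and I bypass this by the finite-length argument above; an alternative route would be to identify $M^{\vee}$ as a submodule of a ``dual parabolic Verma'' via cogenerator reasoning, but invoking finite length is considerably shorter.
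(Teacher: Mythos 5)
The paper gives no proof of this proposition at all: it is stated immediately after recalling the construction of $(\cdot)^{\vee}$ from \cite[Section 3.2]{H}, and the reader is left to supply the verification. Your proof fills in the standard argument correctly: the polynomial condition passes to $M^{\vee}$ via self-duality of $S^{\mu}U_n$ under the transpose anti-involution of $\gl(U_n)$, local $\mathfrak{u}^{+}_{\mathfrak{p}_n}$-nilpotency passes because $M$ and $M^{\vee}$ have identical weight sets and these are bounded above in the $\epsilon_1$-coordinate, and finite generation of $M^{\vee}$ is deduced from finite length of $M$ together with preservation of composition factors by $(\cdot)^{\vee}$ (the contragredient of a simple highest-weight module is simple of the same highest weight). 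The exactness of $(\cdot)^{\vee}$ and the natural isomorphism $\mathrm{id}\cong(\cdot)^{\vee\vee}$ are correct and standard.

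One technical point deserves to be made explicit. In your first paragraph you need the finite-dimensional generating subspace $E$ to be $\mathfrak{p}_n$-stable in order for the PBW argument to produce a surjection from a generalized Verma module. You explicitly arrange $\mathfrak{u}^{+}_{\mathfrak{p}_n}$- and $\gl(U_n)$-stability, but $E_{1,1}$-stability is also required and does not directly follow from the three conditions of Definition \ref{def:par_cat_O_splitting} as literally written: local finiteness of $E_{1,1}$ (equivalently, $\mathfrak{h}$-semisimplicity) is not among them. The paper itself implicitly assumes this — for instance when asserting that $\co^{\mathfrak{p}_n}_{\nu,\bC^n}$ is a direct summand of $\co^{\mathfrak{p}_n}_{\bC^n}$, which presupposes a degree decomposition — so your argument is working under the same convention the paper does. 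If one instead works degree by degree with Definition \ref{def:par_cat_O_gl_N}, where $E_{1,1}$ is forced to act by $\nu-\abs{\lambda}$ on each $S^{\lambda}U_n$, this issue disappears and your proof goes through without modification.
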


In fact, a similar construction can be made for $\co^{\mathfrak{p}_{\infty}}_{\nu, \bC^{\infty}}$. All modules $M$ in $\co^{\mathfrak{p}_{\infty}}_{\nu, \bC^{\infty}}$ are weight modules with respect to the subalgebra of diagonal matrices in $\gl_{\infty}$, and the weight spaces are finite-dimensional (due to the polynomiality condition in the definition of $\co^{\mathfrak{p}_{\infty}}_{\nu, \bC^{\infty}}$). This allows one to construct the restricted twisted dual $M^{\vee}$ in the same way as before, and obtain an exact functor $$(\cdot)^{\vee}: \left(\co^{\mathfrak{p}_{\infty}}_{\nu, \bC^{\infty}}\right)^{op} \longrightarrow \co^{\mathfrak{p}_{\infty}}_{\nu, \bC^{\infty}}$$

\begin{remark}
 It is obvious that for $n \in \bZ_+$, the functor $(\cdot)^{\vee}: \left(\co^{\mathfrak{p}_n}_{\bC^n}\right)^{op} \rightarrow  \co^{\mathfrak{p}_n}_{\bC^n} $ takes finite-dimensional \InnaD{(polynomial)} modules to finite-dimensional \InnaD{(polynomial)} modules.
 
 In fact, one can easily check that the functor $(\cdot)^{\vee}: \left(\co^{\mathfrak{p}_{\infty}}_{\nu, \bC^{\infty}}\right)^{op} \longrightarrow \co^{\mathfrak{p}_{\infty}}_{\nu, \bC^{\infty}}$ takes polynomial modules to polynomial modules as well. 
\end{remark}

\subsection{Structure of the category \texorpdfstring{$\co^{\mathfrak{p}_n}_{\nu, \bC^n}$}{parabolic category O}}\label{ssec:structure_cat_O}

In this subsection, we present some facts about the category \InnaD{$\co^{\mathfrak{p}_n}_{\nu, \bC^n} $} which will be used later on. The material of this section is discussed in more detail in \cite[Section 5]{EA} \InnaD{and is} mostly based on \cite[Chapter 9]{H}.

Fix $\nu \in \bC$, and fix $n \in \bZ_+$. We denote by $e_1, e_2, ..., e_n$ the standard basis of $\bC^n$, and denote $\triv:= e_1$, $U_{\InnaD{n}} := span \{ e_2, e_3, ..., e_n\}$.
%\InnaB{and let $(V, \triv)$ be a finite-dimensional unital vector space with a fixed splitting $V = \bC \triv \oplus U$.}
%, and let $\co^{\mathfrak{p}}_{\nu, V}$ be the full subcategory of $\co^{\mathfrak{p}}_{V}$ whose objects are modules of degree $\nu$.

We will consider the category $\co^{\mathfrak{p}_n}_{{\bC^n}}$ for the unital vector space $(\bC^n, \triv)$ and the splitting $\bC^n = \bC \triv \oplus U_n$.

%The next propositions are based on \cite[Section 9.3]{H}:

\begin{proposition}
 The category $\co^{\mathfrak{p}_n}_{{\bC^n}}$ (resp. $Ind-\co^{\mathfrak{p}_n}_{{\bC^n}}$) is closed under taking duals, direct sums, submodules, quotients and extensions in $\co_{\gl_n}$, as well as tensoring with finite dimensional $\gl_n$-modules. 
\end{proposition}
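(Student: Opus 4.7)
My approach would be to check, one defining condition of $\co^{\mathfrak{p}_n}_{\bC^n}$ at a time, that it is preserved by each of the listed operations. The three conditions are: (i) polynomiality of $M\rvert_{\gl(U_n)}$, i.e.\ $M\rvert_{\gl(U_n)} \in Ind\text{-}Rep(\gl(U_n))_{poly}$; (ii) local $\mathfrak{u}_{\mathfrak{p}_n}^{+}$-finiteness; and (iii) finite generation over $\mathcal{U}(\gl_n)$. The point is that (i) and (ii) are ``Serre-type'' conditions (stable under subquotients, extensions, and direct sums on the nose), while (iii) is the one requiring a nontrivial input. For the $Ind$-version the third condition is dropped (or replaced by being a union of such objects), making everything easier.

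First I would dispatch direct sums, submodules, quotients, and extensions. The subcategory $Ind\text{-}Rep(\gl(U_n))_{poly}$ of $\gl(U_n)$-modules is closed under arbitrary direct sums, subobjects, quotients, and extensions, which takes care of (i); condition (ii) is preserved by every elementary operation by a one-line check on vectors. For (iii), quotients, finite direct sums, and extensions are immediate (take the images, resp.\ unions, resp.\ generators of both subquotients). The only delicate point is submodules: here I invoke that $\mathcal{U}(\gl_n)$ is a (left and right) Noetherian ring, so any submodule of a finitely generated $\mathcal{U}(\gl_n)$-module is itself finitely generated; alternatively one appeals to the fact that objects in $\co^{\mathfrak{p}_n}_{\bC^n}$ are of finite length by standard parabolic category $\co$ theory \cite[Ch.~9]{H}.

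Next I would handle the duality $M \mapsto M^{\vee}$, using the weight-space description of \S\ref{ssec:duality_cat_O}. Condition (i) holds because the Cartan involution on $\gl(U_n)$ identifies each $S^{\lambda} U_n$ with its twisted dual, so the isotypic decomposition of $M^{\vee}\rvert_{\gl(U_n)}$ is the same as that of $M\rvert_{\gl(U_n)}$. Condition (ii) is where one must think for a moment: on $M^{\vee}$, the action of $\mathfrak{u}_{\mathfrak{p}_n}^{+}$ corresponds via transpose to the action of $\mathfrak{u}_{\mathfrak{p}_n}^{-}$ on $M$. A vector $f \in M^{\vee}$ is supported on finitely many weight spaces, and $\mathfrak{u}_{\mathfrak{p}_n}^{-}$ strictly lowers the $\gl(U_n)$-grading, so after finitely many steps $f$ vanishes on $(\mathfrak{u}_{\mathfrak{p}_n}^{-})^{k} M$, giving local nilpotency. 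Condition (iii) for $M^{\vee}$ follows from finite length of $M$ (hence of $M^{\vee}$, which has the same composition factors up to the involution).

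Finally, for tensoring with a finite-dimensional $\gl_n$-module $V$ (understood so that $V\rvert_{\gl(U_n)}$ is polynomial, e.g.\ $V \in Rep(\gl_n)_{poly}$): condition (i) is the Pieri-type fact that a tensor product of polynomial $\gl(U_n)$-modules is polynomial; condition (ii) is immediate since $V$ is finite-dimensional and hence globally $\mathfrak{u}_{\mathfrak{p}_n}^{+}$-finite, so $\mathfrak{u}_{\mathfrak{p}_n}^{+}(v \otimes m) \subset V \otimes \mathcal{U}(\mathfrak{u}_{\mathfrak{p}_n}^{+})m$ lies in a finite-dimensional subspace; and condition (iii) holds because generators of $V \otimes M$ can be taken to be $v_i \otimes m_j$ for a basis $\{v_i\}$ of $V$ and a generating set $\{m_j\}$ of $M$. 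The main obstacle in the whole argument is really just the submodule case of (iii) for $\co^{\mathfrak{p}_n}_{\bC^n}$; once one cites Noetherianity (or finite length) everything else is a direct verification along the lines of \cite[Ch.~9]{H} adapted to the polynomiality constraint.
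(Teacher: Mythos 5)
The paper gives no proof of this proposition (it refers to \cite[Section 5]{EA} and \cite[Chapter 9]{H}), so there is nothing to compare against line by line; your plan of checking each of the three defining conditions against each operation is the standard one and is essentially the expected argument. Your handling of direct sums, submodules, quotients, extensions, and tensoring is correct, including the key appeal to Noetherianity of $\mathcal{U}(\gl_n)$ (or finite length) for submodules, and your observation that for the $Ind$-version the finite-generation condition is relaxed. You were also right to flag the tensoring clause: if $V$ is allowed to be an arbitrary finite-dimensional \emph{rational} $\gl_n$-module (e.g.\ $V=(\bC^n)^*$, so $V\rvert_{\gl(U_n)} \cong \bC \oplus U_n^*$), then condition (i) (polynomiality of the restriction to $\gl(U_n)$) is not preserved; the statement really should be read with $V$ polynomial, i.e. $V\rvert_{\gl(U_n)} \in Rep(\gl(U_n))_{poly}$.

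One small but genuine slip in the duality step: you write that $\mathfrak{u}_{\mathfrak{p}_n}^{-}$ \emph{strictly lowers} the $\gl(U_n)$-grading, but in fact the opposite holds. Since $\mathfrak{u}_{\mathfrak{p}_n}^{-} = \operatorname{span}\{E_{i,1} : i>1\}$ sends a weight $\mu$ to $\mu + \epsilon_i - \epsilon_1$, it \emph{raises} the polynomial degree $\sum_{j>1}\mu_j$ by $1$ (equivalently, it lowers the $E_{1,1}$-eigenvalue). This actually makes your argument work: since $M$ is concentrated in nonnegative degrees, the subspace $(\mathfrak{u}_{\mathfrak{p}_n}^{-})^k M$ sits entirely in degrees $\geq k$, so a vector $f\in M^{\vee}$ supported on degrees $\leq b$ is annihilated by $(\mathfrak{u}_{\mathfrak{p}_n}^{+})^{k}$ for $k>b$. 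Had $\mathfrak{u}_{\mathfrak{p}_n}^{-}$ truly lowered the grading, the set $(\mathfrak{u}_{\mathfrak{p}_n}^{-})^k M$ could still meet every low degree and the argument would fail, so the direction is not a cosmetic point.
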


The category $\co^{\mathfrak{p}_n}_{\nu, \bC^n}$ decomposes into blocks (each of the blocks is an abelian category in its own right). To each $\stackrel{\nu}{\sim}$-class of Young diagrams corresponds a block of $\co^{\mathfrak{p}}_{\nu, \bC^n}$. If all Young diagrams $\lambda$ in this $\stackrel{\nu}{\sim}$-class have length at least $n$, then the corresponding block is zero. To each non-zero block of $\co^{\mathfrak{p}}_{\nu, \bC^n}$ corresponds a unique $\stackrel{\nu}{\sim}$-class. 

Moreover, the blocks corresponding to trivial $\stackrel{\nu}{\sim}$-classes are either semisimple (i.e. equivalent to the category $Vect_{\bC}$), or zero. 

\mbox{}

We now proceed to discuss standard objects in $\co^{\mathfrak{p}_n}_{\bC^n}$.
\begin{definition}\label{def:parabolic_Verma_mod}
Let $\lambda$ be a Young diagram. \InnaD{The generalized Verma module} $M_{\mathfrak{p}_n}(\nu-\abs{\lambda}, \lambda)$ is defined to be the $\gl_n$-module $$\mathcal{U}(\gl_n) \otimes_{\mathcal{U}(\mathfrak{p}_{\InnaD{n}})} S^{\lambda} U_{\InnaD{n}}$$ where $\gl(U_{\InnaD{n}})$ acts naturally on $S^{\lambda} U_{\InnaD{n}}$, $\id_{\bC^n} \in \mathfrak{p}_{\InnaD{n}}$ acts on $S^{\lambda} U_{\InnaD{n}}$ by scalar $\nu$, and $\mathfrak{u}_{\mathfrak{p}_n}^{+}$ acts on $S^{\lambda} U_{\InnaD{n}}$ by zero.
 
 Thus $M_{\mathfrak{p}_n}(\nu-\abs{\lambda}, \lambda)$ is the parabolic Verma module for $(\mathfrak{gl}_n, \mathfrak{p}_{\InnaD{n}})$ with highest weight $(\nu-\abs{\lambda}, \lambda)$ iff $n-1 \geq \ell(\lambda)$, and zero otherwise.
\end{definition}

% We will sometimes refer to the parabolic Verma modules as ``standard modules''.

\begin{definition}
 $L(\nu-\abs{\lambda}, \lambda)$ is defined to be zero (if $n \geq \ell(\lambda)$), or the simple module for $\mathfrak{gl}_n$ of highest weight $(\nu-\abs{\lambda}, \lambda)$ otherwise.
\end{definition}

The following basic lemma will be very helpful:
\begin{lemma}\label{lem:gl_u_struct_o_cat}
 Let $\lambda$ \InnaD{be a Young diagram} such that $  \ell(\lambda) < n$. We then have an isomorphism of $\gl(U_{\InnaD{n}})$-modules:
$$M_{\mathfrak{p}_n}(\nu-\abs{\lambda}, \lambda) \cong Sym(U_{\InnaD{n}}) \otimes S^{\lambda} U_{\InnaD{n}}$$
\end{lemma}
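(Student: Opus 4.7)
The plan is to use the PBW theorem to give the Verma module a product decomposition, then identify the resulting $\gl(U_n)$-structure.

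First, I would apply the PBW theorem to the triangular-style decomposition $\gl_n = \mathfrak{u}_{\mathfrak{p}_n}^- \oplus \mathfrak{p}_n$ (where $\mathfrak{u}_{\mathfrak{p}_n}^- = \mathrm{span}\{E_{i,1} \mid i > 1\}$). This gives an isomorphism of $(\mathcal{U}(\mathfrak{u}_{\mathfrak{p}_n}^-), \mathcal{U}(\mathfrak{p}_n))$-bimodules $\mathcal{U}(\gl_n) \cong \mathcal{U}(\mathfrak{u}_{\mathfrak{p}_n}^-) \otimes \mathcal{U}(\mathfrak{p}_n)$. Consequently, as vector spaces (and as left $\mathcal{U}(\mathfrak{u}_{\mathfrak{p}_n}^-)$-modules),
\[
M_{\mathfrak{p}_n}(\nu - |\lambda|, \lambda) \;=\; \mathcal{U}(\gl_n) \otimes_{\mathcal{U}(\mathfrak{p}_n)} S^{\lambda} U_n \;\cong\; \mathcal{U}(\mathfrak{u}_{\mathfrak{p}_n}^-) \otimes S^{\lambda} U_n.
\]

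Next, I would observe that $\mathfrak{u}_{\mathfrak{p}_n}^-$ is an abelian Lie subalgebra of $\gl_n$ (since $[E_{i,1}, E_{j,1}] = 0$ for $i, j > 1$), so $\mathcal{U}(\mathfrak{u}_{\mathfrak{p}_n}^-) \cong Sym(\mathfrak{u}_{\mathfrak{p}_n}^-)$ as commutative algebras. I would then identify $\mathfrak{u}_{\mathfrak{p}_n}^-$ with $U_n$ as $\gl(U_n)$-modules via $E_{i,1} \longleftrightarrow e_i$; a direct computation of $[E_{ij}, E_{k,1}] = \delta_{jk} E_{i,1}$ for $i,j,k > 1$ shows that the adjoint action of $\gl(U_n)$ on $\mathfrak{u}_{\mathfrak{p}_n}^-$ matches the defining action on $U_n$. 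This upgrades the identification to an isomorphism $Sym(\mathfrak{u}_{\mathfrak{p}_n}^-) \cong Sym(U_n)$ of $\gl(U_n)$-modules.

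Finally, I would verify that the composite isomorphism $M_{\mathfrak{p}_n}(\nu-|\lambda|,\lambda) \cong Sym(U_n) \otimes S^{\lambda} U_n$ is $\gl(U_n)$-equivariant. This amounts to tracking how $\gl(U_n) \subset \mathfrak{p}_n$ acts on a PBW monomial $(E_{i_1,1} \cdots E_{i_k,1}) \otimes v$ with $v \in S^{\lambda} U_n$: using the commutation relation above to move an element $X \in \gl(U_n)$ past the factors $E_{i_s, 1}$ and letting it act on $v$, one recovers precisely the tensor-product action of $\gl(U_n)$ on $Sym(U_n) \otimes S^{\lambda} U_n$.

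The main obstacle is purely bookkeeping: ensuring that the $\gl(U_n)$-action one gets from the commutator identities is the genuine tensor-product action, not twisted by some character. The hypothesis $\ell(\lambda) < n$ enters only to guarantee that $S^{\lambda}U_n \neq 0$ (otherwise the statement is the trivial $0 \cong 0$); once this is noted, the proof reduces to the PBW calculation sketched above.
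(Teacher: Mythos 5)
Your proof is correct. The paper states this as a ``basic lemma'' and omits the proof entirely, so there is nothing to compare against; your PBW argument (tensor-identity over $\mathcal{U}(\mathfrak{p}_n)$, identification of $\mathcal{U}(\mathfrak{u}_{\mathfrak{p}_n}^-)$ with $Sym(U_n)$ via the adjoint action, and the commutator bookkeeping to establish $\gl(U_n)$-equivariance) is exactly the standard justification one would supply, and the observation that $\ell(\lambda)<n$ merely guarantees $S^{\lambda}U_n \neq 0$ is a correct reading of the hypothesis.
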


% \begin{proof}
% Follows directly from the definition of $M_{\mathfrak{p}_n}(\nu-\abs{\lambda}, \lambda)$ and the PBW theorem for $\gl_n$.
% \end{proof}

% 
% \begin{proposition}\label{prop:par_cat_O_Vermas}
% \mbox{}
% \begin{enumerate}
% \item Let $\mu, \T$ be two Young diagrams. Then $$\dim \Hom_{\co^{\mathfrak{p}_n}_{\nu, {\bC^n}}} \left( M_{\mathfrak{p}_n}(\nu-\abs{\mu}, \mu), M_{\mathfrak{p}_n}(\nu-\abs{\T}, \T) \right) =0$$
% if $\mu$, $\T$ lie in different $\stackrel{\nu}{\sim}$-classes.
% 
% \item Fix a non-trivial $\stackrel{\nu}{\sim}$-class $ \{\lambda^{(i)}\}$, $\lambda^{(0)} \subset \lambda^{(1)} \subset \lambda^{(2)} \subset ...$. For any $i, j \in \bZ_+$, we have
% \begin{align*}
% &\dim \Hom_{\co^{\mathfrak{p}_n}_{\nu, {\bC^n}}} \left( M_{\mathfrak{p}_n}(\nu-\abs{\lambda^{(j)}}, \lambda^{(j)}), M_{\mathfrak{p}_n}(\nu-\abs{\lambda^{(i)}}, \lambda^{(i)}) \right) = 0 \, \text{ if } j \neq i, i+1, \text{ and }\\
% &\dim \Hom_{\co^{\mathfrak{p}_n}_{\nu, {\bC^n}}} \left( M_{\mathfrak{p}_n}(\nu-\abs{\lambda^{(i+1)}}, \lambda^{(i+1)}), M_{\mathfrak{p}_n}(\nu-\abs{\lambda^{(i)}}, \lambda^{(i)}) \right) = 1 \, \text{ if }  \ell(\lambda^{(i+1)}) <n
% \end{align*}
% \item \InnaA{For any Young diagram $\lambda$ such that $ \ell(\lambda) < n$, we have:
% $$\dim \End_{\co^{\mathfrak{p}_n}_{\nu, {\bC^n}}} \left( M_{\mathfrak{p}_n}(\nu-\abs{\lambda}, \lambda) \right) =1$$}
% \end{enumerate}
% \end{proposition}

We will also use the \InnaE{following lemma.}

\begin{lemma}\label{lem:parab_Verma_ses}
Let $ \{\lambda^{(i)}\}_i$ be a non-trivial $\stackrel{\nu}{\sim}$-class, and $i \geq 0$ be such that $ \ell(\lambda^{(i)}) <n $.

Then there is a short exact sequence
$$ 0 \rightarrow L(\nu-\abs{\lambda^{(i+1)}}, \lambda^{(i+1)}) \rightarrow M_{\mathfrak{p}_n}(\nu-\abs{\lambda^{(i)}}, \lambda^{(i)}) \rightarrow L(\nu-\abs{\lambda^{(i)}}, \lambda^{(i)}) \rightarrow 0$$

\end{lemma}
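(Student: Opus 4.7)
The plan is to identify the unique maximal proper submodule of $M_{\mathfrak{p}_n}(\nu-\abs{\lambda^{(i)}}, \lambda^{(i)})$ with the simple module $L(\nu-\abs{\lambda^{(i+1)}}, \lambda^{(i+1)})$. Since a parabolic Verma module is a highest weight module and hence has a unique simple quotient, which in this case is $L(\nu-\abs{\lambda^{(i)}}, \lambda^{(i)})$, producing such an identification is enough to yield the desired short exact sequence.

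First, I would exploit the $\gl(U_n)$-module structure. By Lemma \ref{lem:gl_u_struct_o_cat} combined with Pieri's rule,
$$M_{\mathfrak{p}_n}(\nu - \abs{\lambda^{(i)}}, \lambda^{(i)}) \;\cong\; Sym(U_n) \otimes S^{\lambda^{(i)}} U_n \;\cong\; \bigoplus_{\mu} S^{\mu} U_n,$$
where $\mu$ ranges over Young diagrams with $\ell(\mu) \leq n-1$ such that $\mu/\lambda^{(i)}$ is a horizontal strip. Any composition factor $L(\nu-\abs{\lambda^{(j)}}, \lambda^{(j)})$ lies in the same block (so $\lambda^{(j)} \stackrel{\nu}{\sim} \lambda^{(i)}$), and its $\gl(U_n)$-highest weight $\lambda^{(j)}$ must appear as a summand above, forcing $j \geq i$ and $\lambda^{(j)}/\lambda^{(i)}$ to be a horizontal strip.

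Second, I would produce a nonzero homomorphism $\varphi \colon M_{\mathfrak{p}_n}(\nu-\abs{\lambda^{(i+1)}}, \lambda^{(i+1)}) \to M_{\mathfrak{p}_n}(\nu-\abs{\lambda^{(i)}}, \lambda^{(i)})$. The chain structure recalled in Subsection \ref{ssec:S_nu_general} guarantees that $\lambda^{(i+1)}/\lambda^{(i)}$ is a horizontal strip, so $S^{\lambda^{(i+1)}} U_n$ appears with multiplicity one in the decomposition above at $Sym$-degree $\abs{\lambda^{(i+1)}} - \abs{\lambda^{(i)}}$. Its $\gl(U_n)$-highest weight vector $v$ should be annihilated by $\mathfrak{u}_{\mathfrak{p}_n}^{+}$: each $E_{1,k} \cdot v$ would lie at $Sym$-degree one less with $\gl(U_n)$-weight $\lambda^{(i+1)} - \varepsilon_k$, and a Pieri-rule check, using that no $\lambda^{(j)}$ lies strictly between $\lambda^{(i)}$ and $\lambda^{(i+1)}$ in the chain, shows no $\gl(U_n)$-summand at that lower degree supports this weight. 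The universal property of parabolic Verma modules then yields $\varphi$.

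Finally, I would identify $\mathrm{Im}(\varphi)$ with $L(\nu-\abs{\lambda^{(i+1)}}, \lambda^{(i+1)})$ and show it exhausts the maximal proper submodule. I proceed by downward induction on $i$: the base case is the largest $i$ with $\ell(\lambda^{(i+1)}) \leq n-1$, beyond which the source Verma vanishes and the target Verma has no singular vectors other than its cyclic generator, hence is itself simple. In the inductive step the hypothesis for $i+1$ describes $M_{\mathfrak{p}_n}(\nu-\abs{\lambda^{(i+1)}}, \lambda^{(i+1)})$ as an extension of $L(\nu-\abs{\lambda^{(i+1)}}, \lambda^{(i+1)})$ by $L(\nu-\abs{\lambda^{(i+2)}}, \lambda^{(i+2)})$; since $\lambda^{(i+2)}/\lambda^{(i)}$ is not a horizontal strip (a further feature of the explicit chain), $\lambda^{(i+2)}$ does not occur as a $\gl(U_n)$-summand of the target, so $\ker(\varphi)$ contains the socle and $\mathrm{Im}(\varphi) \cong L(\nu-\abs{\lambda^{(i+1)}}, \lambda^{(i+1)})$. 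A $\gl(U_n)$-character count then shows this image together with the generating $S^{\lambda^{(i)}} U_n$-isotype accounts for the full Verma, so the quotient is the simple head $L(\nu-\abs{\lambda^{(i)}}, \lambda^{(i)})$. The main obstacle is the combinatorial chain analysis supporting these last two paragraphs, namely the singularity of $v$ and the non-horizontal-strip property of $\lambda^{(i+2)}/\lambda^{(i)}$; both reduce to an explicit verification using the description of $\stackrel{\nu}{\sim}$-classes via permutations of the sequence $(\nu-\abs{\lambda},\, \lambda_1 - 1,\, \lambda_2 - 2,\, \ldots)$, which I would handle by a direct case analysis on the effect of a single transposition of consecutive entries on the underlying Young diagram.
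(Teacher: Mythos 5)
The paper states this lemma without proof, citing \cite[Section 5]{EA} and \cite[Chapter 9]{H} as sources; so there is no internal proof to compare against, and I will assess your argument on its own.

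Your overall strategy is sound: use Lemma \ref{lem:gl_u_struct_o_cat} and the Pieri rule to decompose $M_{\mathfrak{p}_n}(\nu-|\lambda^{(i)}|,\lambda^{(i)})$ multiplicity-freely as a $\gl(U_n)$-module, deduce that the only possible composition factors are $L(\nu-|\lambda^{(i)}|,\lambda^{(i)})$ and $L(\nu-|\lambda^{(i+1)}|,\lambda^{(i+1)})$ each occurring at most once, and then settle which ones actually occur. The combinatorial facts you defer (that $\lambda^{(i+1)}/\lambda^{(i)}$ is a horizontal strip whereas $\lambda^{(i+2)}/\lambda^{(i)}$ is not) are both correct and do follow from the explicit form $\lambda^{(i)}_j = a_{j-1}+j$ for $j\le i$ and $\lambda^{(i)}_j=a_j+j$ for $j>i$, where $(a_0>a_1>\dots)$ is the sorted sequence $(\nu-|\lambda^{(0)}|,\lambda^{(0)}_1-1,\lambda^{(0)}_2-2,\dots)$; this is the right place to carry out the case analysis you describe.

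However, the crucial step --- that the $\gl(U_n)$-highest-weight vector $v$ of the $S^{\lambda^{(i+1)}}U_n$-isotype is annihilated by $\mathfrak{u}^+_{\mathfrak{p}_n}$ --- is incorrectly justified. You assert that ``a Pieri-rule check ... shows no $\gl(U_n)$-summand at that lower degree supports this weight.'' This is false: taking $k=i+1$, the weight $\lambda^{(i+1)}-\varepsilon_{i+1}$ is precisely the highest weight of $S^{\mu'}U_n$ where $\mu'$ is $\lambda^{(i+1)}$ with the last box of row $i+1$ removed, and since $\mu'/\lambda^{(i)}$ is still a horizontal strip (of size $d-1$), $S^{\mu'}U_n$ does occur in $Sym^{d-1}(U_n)\otimes S^{\lambda^{(i)}}U_n$. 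So the Pieri/weight argument alone does not rule out $E_{1,i+1}\cdot v\neq 0$. The vanishing is true --- one can check it by hand in small cases, and it follows structurally from the fact that the Casimir of $\gl_n$ acts on $M_{\mathfrak{p}_n}(\nu-|\lambda^{(i)}|,\lambda^{(i)})$ by the scalar attached to $\lambda^{(i)}$, which by $\lambda^{(i)}\stackrel{\nu}{\sim}\lambda^{(i+1)}$ equals the scalar attached to $\lambda^{(i+1)}$, forcing the would-be lower-degree image of $v$ to vanish (alternatively, the Jantzen sum formula in \cite[Chapter 9]{H} identifies the radical directly). Your downward induction does not repair this: it still presupposes the existence of the morphism $\varphi$ at every step, which is exactly what the singular-vector claim supplies. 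I would also note that once $\varphi$ exists and is nonzero, the identification of $\mathrm{Im}(\varphi)$ is immediate without induction, since its composition factors must lie in $\{L^{(i)},L^{(i+1)}\}\cap\{L^{(i+1)},L^{(i+2)}\}=\{L^{(i+1)}\}$; the real work is entirely in producing the singular vector, and that is where your argument has the gap.
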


\begin{corollary}\label{cor:simple_Verma_basis_Groth_group}
 \InnaE{\InnaF{The isomorphism classes} of the generalized Verma modules and the simple polynomial modules in $\co^{\mathfrak{p}_n}_{\nu, {\bC^n}}$ form a basis for the Grothendieck group of $\co^{\mathfrak{p}_n}_{\nu, {\bC^n}}$. }
\end{corollary}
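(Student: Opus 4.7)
The plan is to reduce to a block-by-block argument. Because every object of $\co^{\mathfrak{p}_n}_{\nu,\bC^n}$ is a finitely generated $\gl_n$-module with finite-dimensional weight spaces, it has finite length, so the Grothendieck group decomposes as the direct sum of the Grothendieck groups of the individual blocks. Each block is indexed by a $\stackrel{\nu}{\sim}$-class of Young diagrams, and the isomorphism classes of the simples in that block form the obvious basis; it suffices to exhibit, in each block, a triangular change of basis from these simples to the list consisting of the generalized Vermas together with the polynomial simples.

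For a trivial $\stackrel{\nu}{\sim}$-class $\{\lambda\}$ with $\ell(\lambda)\le n-1$, the block is semisimple with a single simple object $L(\nu-\abs{\lambda},\lambda)$ that coincides with the generalized Verma $M_{\mathfrak{p}_n}(\nu-\abs{\lambda},\lambda)$, so there is nothing to show. The substantial case is a non-trivial class $\{\lambda^{(i)}\}_{i\ge 0}$: let $k$ be the largest index with $\ell(\lambda^{(k)})\le n-1$, so that the nonzero simples in the block are $L_0,\ldots,L_k$ (with $L_j:=L(\nu-\abs{\lambda^{(j)}},\lambda^{(j)})$) and the nonzero Vermas are $M_0,\ldots,M_k$. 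Lemma \ref{lem:parab_Verma_ses} then gives
\[
[M_i]=[L_i]+[L_{i+1}]\quad(0\le i<k),\qquad [M_k]=[L_k]
\]
in the Grothendieck group. These relations are upper-triangular with $1$'s on the diagonal, so the Vermas alone form a basis of the block. Since a non-trivial block occurs only when $\nu\in\bZ_+$, there is a unique polynomial simple in it, namely $L_0\cong S^{\mu}\bC^n$ where $\mu$ is the Young diagram of size $\nu$ labelling the class; solving the relations yields $[L_0]=\sum_{j=0}^{k}(-1)^{j}[M_j]$, so one may equivalently replace the Verma $M_0$ by the polynomial simple $L_0$ and still obtain a basis of the block.

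Collating the bases over all nonzero blocks produces the asserted basis of the full Grothendieck group. The only real difficulty is the bookkeeping: identifying which Vermas vanish in a given block (determined by the condition $\ell(\lambda^{(i)})\le n-1$), when the whole block itself vanishes, and, for a non-trivial block containing a polynomial simple, which Verma is swapped for it. Beyond this bookkeeping, the entire argument is the triangular change of basis encoded in Lemma \ref{lem:parab_Verma_ses}.
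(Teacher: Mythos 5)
The paper gives no proof of this corollary—it is stated immediately after Lemma \ref{lem:parab_Verma_ses}, from which it is meant to follow—so there is no argument to compare against directly. Your reconstruction is the natural one and is mathematically sound: decompose over blocks (valid because all objects have finite length), identify the single simple $=$ Verma in each trivial block, and in a non-trivial block use Lemma \ref{lem:parab_Verma_ses} to obtain the unitriangular relations $[M_i]=[L_i]+[L_{i+1}]$, $[M_k]=[L_k]$, from which the Vermas form a basis and $[L_0]=\sum_{j=0}^{k}(-1)^j[M_j]$ allows swapping $M_0$ for $L_0$.

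One point you resolve silently but should flag: read literally, ``the generalized Verma modules and the simple polynomial modules'' overcounts. In a non-trivial block with $k+1$ nonzero simples ($k\geq 1$) there are $k+1$ nonzero generalized Vermas $M_0,\dots,M_k$ \emph{and} one polynomial simple $L_0$, with $L_0\not\cong M_i$ for any $i$ (the Vermas are infinite-dimensional by Lemma \ref{lem:gl_u_struct_o_cat}); that is $k+2$ classes in a rank-$(k+1)$ Grothendieck group, so the union is not literally a basis. Your computation in fact shows the correct statement: either the Vermas alone, or the set obtained by replacing $M_0$ by $L_0$ in each non-trivial block, is a genuine basis, and the latter is clearly the intended reading (it is also all that the application in the sublemma of Proposition \ref{prop:inv_lim_gl_infty_par_cat} uses, since there one only needs the classes of Vermas and polynomial simples to generate). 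Your final paragraph does implicitly perform this swap (``which Verma is swapped for it''), so the argument is correct; it would just be cleaner to say explicitly that the corollary's wording must be interpreted as the swap, or weakened to a spanning assertion.
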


\section{Stable inverse limit of parabolic categories \texorpdfstring{$\co$}{O}}\label{sec:lim_par_cat_O}
\subsection{Restriction functors}\label{ssec:Res_functors}
% In this subsection we define the restriction functors $$\widehat{Res}_{\nu, N}: \widehat{\co}^{\mathfrak{p}_N}_{\nu, \bC^N} \rightarrow \widehat{\co}^{\mathfrak{p}_{N-1}}_{\nu, \bC^{N-1}}$$ for each $N \geq 1$, which will allow us to talk about the inverse limit of categories $\widehat{\co}^{\mathfrak{p}_N}_{\nu, \bC^N}$.
% 
% Consider the vector spaces $\bC^N$, $N\geq 0$, with standard bases $e_1, ..., e_N$, and with standard projections $P_N: \bC^N \rightarrow \bC^{N-1}$ and injections $I_N: \bC^{N-1} \rightarrow \bC^N$ for each $N \geq 1$ ($P_N \circ I_N = \id_{\bC^N}$). We have Lie algebra inclusions $f_N: \gl_{N-1} \hookrightarrow \gl_N$ and functors $\InnaA{\InnaD{\mathrm{Res}}}_N: Mod_{\mathcal{U}(\gl_N)} \rightarrow Mod_{\mathcal{U}(\gl_{N-1})}$ (arising from $f_N$) for each $N \geq 1$.
% 
% Let $\triv:= e_1 \in \bC^N$.
% This allows us to consider parabolic subalgebras $\mathfrak{p}_N$ of $\gl_N(\bC)$ for each $N \geq 0$, such that $f_N(\mathfrak{p}_{N-1})= \mathfrak{p}_{N}$.
% Furthermore, we have standard splittings $\bC^N \cong \bC\triv \oplus U_N$ such that $P_N(U_N) = U_{N-1}$.
% 
% 
% Note: for simplicity, we will write $\gl_{N-1} \subset \gl_N$ instead of using the notation $f_N$.
% 
% Let $N \geq 1$.

% We will mainly consider its restriction $$Res_{\nu, N}: \co^{\mathfrak{p}_N}_{\nu, \bC^N} \rightarrow \co^{\mathfrak{p}_{N-1}}_{\nu, \bC^{N-1}} ,\; Res_{\nu, N}:= deg_{\nu} \circ \InnaA{\InnaD{\mathrm{Res}}}_N \rvert_{\co^{\mathfrak{p}_N}_{\nu, \bC^N}} $$

\begin{definition}\label{def:Res_funct_cat_o}
Let $n \geq 1$. Define the functor $$\InnaD{\mathfrak{Res}}_{n-1, n}: \co^{\mathfrak{p}_{n}}_{\nu, \bC^{n}} \longrightarrow \co^{\mathfrak{p}_{n-1}}_{\nu, \bC^{n-1}}, \;\; \InnaD{\mathfrak{Res}}_{n-1, n} := ( \cdot)^{\gl_{n-1}^{\perp}}$$
\end{definition} 
\InnaD{Again}, the subalgebras $\gl_{n-1}, \gl_{n-1}^{\perp} \subset \gl_n$ commute, and therefore the subspace of $\gl_{n-1}^{\perp}$-invariants of a $\gl_n$-module automatically carries an action of $\gl_{n-1}$.

\mbox{}

We need to check that this functor is well-defined. In order to do \InnaD{so}, consider the functor $\InnaD{\mathfrak{Res}}_{n-1, n}: \co^{\mathfrak{p}_{n}}_{\nu, \bC^{n}} \longrightarrow Mod_{\mathcal{U}(\gl_{n-1})}$. This functor is well-defined, and we will show that the objects in the image lie in the full subcategory $\co^{\mathfrak{p}_{n-1}}_{\nu, \bC^{n-1}}$ of $Mod_{\mathcal{U}(\gl_{n-1})}$.

Note that the functor $\InnaD{\mathfrak{Res}}_{n-1, n}$ can alternatively be defined as follows: for a module $M$ in $\co^{\mathfrak{p}_n}_{\nu, \bC^n}$, we restrict the action of $\gl_n$ to $\gl_{n-1}$, and then only take the vectors in $M$ attached to specific central characters. More specifically, we have:
\begin{lemma}\label{lem:two_equiv_def_Res}
 The functor $\InnaD{\mathfrak{Res}}_{n-1, n}$ is naturally isomorphic to the composition $ deg_{\nu} \circ \InnaA{\InnaD{\mathrm{Res}}}^{\gl_n}_{\gl_{n-1}}$ \InnaA{(the functor $ deg_{\nu} $ was defined in Definition \ref{def:deg_functor})}.
\end{lemma}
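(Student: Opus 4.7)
The plan is to unwind the two descriptions of $\mathfrak{Res}_{n-1,n}$ and show that they produce the same subspace of $M$, with the same $\gl_{n-1}$-action, for each $M \in \co^{\mathfrak{p}_n}_{\nu, \bC^n}$. The key ingredient is the trivial but crucial identity $\id_{\bC^n} = \id_{\bC^{n-1}} + E_{n,n}$ in $\gl_n$ (where $\id_{\bC^{n-1}}$ is read inside $\gl_{n-1} \subset \gl_n$), combined with the degree-$\nu$ hypothesis.

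First, I would identify $\gl_{n-1}^{\perp}$ explicitly. By the definitions in Subsection \ref{ssec:rep_gl_infty}, $\gl_{n-1}$ is the block supported on rows and columns $1,\dots,n-1$, so its complement $\gl_{n-1}^{\perp} \subset \gl_n$ is the one-dimensional abelian Lie subalgebra $\bC E_{n,n}$. Consequently, for any $\gl_n$-module $M$, the invariants $M^{\gl_{n-1}^{\perp}}$ coincide with the kernel of the operator $E_{n,n}$ acting on $M$.

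Next, I would compare this kernel with $\deg_{\nu}(\mathrm{Res}^{\gl_n}_{\gl_{n-1}} M)$. By Definition \ref{def:degree_gl_module}, the latter consists of those $v \in M$ on which $\id_{\bC^{n-1}} \in \gl_{n-1}$ acts by the scalar $\nu$. Since $M$ has degree $\nu$ as a $\gl_n$-module, $\id_{\bC^n}$ acts on every $v \in M$ by $\nu$, and the relation $\id_{\bC^{n-1}} = \id_{\bC^n} - E_{n,n}$ gives
\[
\id_{\bC^{n-1}}.v = \nu v \iff E_{n,n}.v = 0.
\]
Thus $\deg_{\nu}(\mathrm{Res}^{\gl_n}_{\gl_{n-1}} M) = \ker(E_{n,n}\rvert_M) = M^{\gl_{n-1}^{\perp}}$ as subspaces of $M$. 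The two subspaces also carry the same $\gl_{n-1}$-action: $\gl_{n-1}$ commutes with $\gl_{n-1}^{\perp}$, so $\ker(E_{n,n}\rvert_M)$ is automatically $\gl_{n-1}$-stable, and $\id_{\bC^{n-1}}$ is central in $\gl_{n-1}$, so $\deg_{\nu}$ yields a $\gl_{n-1}$-submodule.

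Finally, naturality is free: a morphism $f : M \to M'$ in $\co^{\mathfrak{p}_n}_{\nu, \bC^n}$ intertwines the action of $E_{n,n}$ and hence maps kernels to kernels, while simultaneously mapping degree-$\nu$ parts to degree-$\nu$ parts under $\mathrm{Res}^{\gl_n}_{\gl_{n-1}}$. The resulting natural isomorphism $(\,\cdot\,)^{\gl_{n-1}^{\perp}} \cong \deg_{\nu} \circ \mathrm{Res}^{\gl_n}_{\gl_{n-1}}$ is then the assertion of the lemma. There is no real obstacle in the argument — everything reduces to the single observation that being annihilated by $E_{n,n}$ is equivalent to having $\id_{\bC^{n-1}}$-eigenvalue $\nu$ on a module of $\gl_n$-degree $\nu$.
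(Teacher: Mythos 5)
Your argument is exactly the paper's: identify $\gl_{n-1}^{\perp}\subset\gl_n$ with $\bC E_{n,n}$ and use the relation $\id_{\bC^n}=\id_{\bC^{n-1}}+E_{n,n}$ together with the degree-$\nu$ hypothesis to equate $\ker(E_{n,n}|_M)$ with the $\id_{\bC^{n-1}}$-eigenspace of eigenvalue $\nu$. You spell out the $\gl_{n-1}$-equivariance and naturality steps that the paper leaves implicit, but the approach is the same.
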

\begin{proof}
Let $M \in \co^{\mathfrak{p}_n}_{\nu, \bC^n}$. For any vector $m \in M$, we know that $\id_{\bC^n}.m = (E_{1,1} + E_{2,2} + ... +E_{n,n}).m = \nu m$. Then the requirement that $\id_{\bC^{n-1}}.m = (E_{1,1} + E_{2,2} + ... +E_{n-1,n-1}).m = \nu m$ is equivalent to requiring that $E_{n,n}.m = 0$, namely that $m \in  M^{\gl_{n-1}^{\perp}}$.
\end{proof}

We will now use this information to prove the following lemma:

\begin{lemma}\label{lem:Res_defined}
The functor $\InnaD{\mathfrak{Res}}_{n-1, n}: \co^{\mathfrak{p}_{n}}_{\nu, \bC^{n}} \longrightarrow \co^{\mathfrak{p}_{n-1}}_{\nu, \bC^{n-1}}$ is well-defined.
\end{lemma}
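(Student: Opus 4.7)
The plan is to verify that for $M \in \co^{\mathfrak{p}_n}_{\nu, \bC^n}$, the space $\mathfrak{Res}_{n-1,n}(M) = M^{\gl_{n-1}^\perp}$ satisfies all the defining conditions of $\co^{\mathfrak{p}_{n-1}}_{\nu, \bC^{n-1}}$ from Definition \ref{def:par_cat_O_splitting}. Note that $\gl_{n-1}^\perp = \bC E_{n,n}$ inside $\gl_n$ and $[\gl_{n-1}, E_{n,n}] = 0$, so $M^{E_{n,n}}$ is automatically a $\gl_{n-1}$-submodule. The degree-$\nu$ condition follows from Lemma \ref{lem:two_equiv_def_Res}: on $M^{E_{n,n}}$ the element $\id_{\bC^{n-1}} = \id_{\bC^n} - E_{n,n}$ acts by the scalar $\nu$. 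On morphisms the functor is trivially well-defined, since any $\gl_n$-equivariant map commutes with $E_{n,n}$ and therefore restricts to a $\gl_{n-1}$-equivariant map on $E_{n,n}$-invariants.

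For the polynomial condition I would write $M \cong \bigoplus_\lambda S^\lambda U_n \otimes Y_\lambda$ as a $\gl(U_n)$-module, with $Y_\lambda$ the multiplicity spaces; then $M^{E_{n,n}} \cong \bigoplus_\lambda (S^\lambda U_n)^{E_{n,n}} \otimes Y_\lambda$. Applying the analogue of Lemma \ref{lem:res_func_simples} to the inclusion $\gl(U_{n-1}) \subset \gl(U_n)$ (for which $\gl(U_{n-1})^\perp = \bC E_{n,n}$) identifies $(S^\lambda U_n)^{E_{n,n}}$ with $S^\lambda U_{n-1}$, so $M^{E_{n,n}}$ is a direct sum of polynomial $\gl(U_{n-1})$-modules. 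Local nilpotency of $M^{E_{n,n}}$ over $\mathfrak{u}_{\mathfrak{p}_{n-1}}^{+}$ is then immediate from the inclusion $\mathfrak{u}_{\mathfrak{p}_{n-1}}^{+} \subset \mathfrak{u}_{\mathfrak{p}_{n}}^{+}$.

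The hard part is finite generation of $M^{E_{n,n}}$ over $\mathcal{U}(\gl_{n-1})$. My plan is first to combine finite generation of $M$ over $\mathcal{U}(\gl_n)$ with local $\mathfrak{u}_{\mathfrak{p}_n}^{+}$-nilpotence and $\gl(U_n)$-polynomiality to produce a finite-dimensional $\mathfrak{p}_n$-submodule $E \subset M$ which generates $M$ as a $\gl_n$-module; PBW then yields a surjection $Sym(U_n) \otimes E \cong \mathcal{U}(\gl_n) \otimes_{\mathcal{U}(\mathfrak{p}_n)} E \twoheadrightarrow M$. Since $E_{n,n}$ acts semisimply, $(\cdot)^{E_{n,n}}$ is exact, producing a surjection $(Sym(U_n) \otimes E)^{E_{n,n}} \twoheadrightarrow M^{E_{n,n}}$. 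Decomposing $Sym(U_n) = Sym(U_{n-1}) \otimes \bC[e_n]$ and $E = \bigoplus_{j \geq 0} E_j$ into $E_{n,n}$-weight spaces, where both factors have only nonnegative $E_{n,n}$-weights by polynomiality, the $E_{n,n}$-weight-zero subspace of the tensor product collapses to $Sym(U_{n-1}) \otimes E_0$, with $E_0$ finite-dimensional. This is manifestly finitely generated over $Sym(U_{n-1}) \subset \mathcal{U}(\gl_{n-1})$, and hence so is its quotient $M^{E_{n,n}}$, completing the verification.
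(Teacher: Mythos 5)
Your proof is correct, and in broad strokes it runs along the same lines as the paper's: check degree, polynomiality, local nilpotence, and finite generation, with the finite-generation step being the nontrivial one. The main divergence is which formulation of the definition you take as your starting point. The paper's own proof works from Definition \ref{def:par_cat_O_gl_N}, in which the third axiom is finite generation of $M$ over $Sym(U_N)$ (i.e.\ a $\gl(U_n)$-equivariant surjection $Sym(U_n)\otimes E \twoheadrightarrow M$ with $E$ polynomial is given to you for free); the paper then just applies $(\cdot)^{\gl(U_{n-1})^\perp}$ to this surjection and uses the fact that $\mathfrak{Res}_{U_{n-1},U_n}$ is monoidal (Corollary \ref{cor:res_func_tensor}) to land the result. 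You instead start from Definition \ref{def:par_cat_O_splitting}, where the axiom is finite generation over $\mathcal{U}(\gl_n)$, so you first have to manufacture the finite-dimensional $\mathfrak{p}_n$-submodule $E$ and the PBW surjection $Sym(U_n)\otimes E \twoheadrightarrow M$ yourself. That part of your argument is sound (local $\mathfrak{u}^+$-nilpotence plus $\gl(U_n)$-polynomiality does give you a finite-dimensional generating $\mathfrak{p}_n$-submodule, and the weight-zero computation $(Sym(U_n)\otimes E)^{E_{n,n}} = Sym(U_{n-1})\otimes E_0$ is correct since all $E_{n,n}$-weights on both tensor factors are nonnegative), but it is precisely the content of the equivalence between Definitions \ref{def:par_cat_O_splitting} and \ref{def:par_cat_O_gl_N} that the paper quietly asserts and does not reprove here. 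So your route is a bit longer but genuinely more self-contained, since it does not presuppose that equivalence; the paper's is shorter because it does. The remaining pieces (degree via Lemma \ref{lem:two_equiv_def_Res}, polynomiality via the branching/restriction for $\gl(U_{n-1})\subset\gl(U_n)$, nilpotence via $\mathfrak{u}^+_{\mathfrak{p}_{n-1}}\subset\mathfrak{u}^+_{\mathfrak{p}_n}$) match the paper essentially verbatim.
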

\begin{proof}
Let $M \in \co^{\mathfrak{p}_n}_{\nu, \bC^n}$, and consider the $\gl_{n-1}$-module $\InnaD{\mathfrak{Res}}_{n-1, n}(M)$. \InnaE{By definition, this is a module of degree $\nu$. We will show that it lies in $ \co^{\mathfrak{p}_{n-1}}_{\nu, \bC^{n-1}}$.}
%  
% By the Lemma \ref{lem:two_equiv_def_Res}, we know that the identity matrix $\id_{\bC^{n-1}} \in \gl_{n-1}$ acts by scalar $\nu$ on $\InnaD{\mathfrak{Res}}_{n-1, n}(M)$. Thus 

\InnaD{First of all, consider \InnaD{the} inclusion $\InnaD{\gl(U_{n-1})^{\perp}} \oplus \gl(U_{n-1}) \subset \gl(U_n)$. This inclusion gives us the restriction functor (see Definition \ref{def:res_funct_poly_repr}) $$\InnaD{\mathfrak{Res}}_{ \InnaD{U_{n-1}, U_n}}: Rep(\gl(U_n))_{poly} \longrightarrow Rep(\gl(U_{n-1}))_{poly} , \;\; \InnaD{\mathfrak{Res}}_{ \InnaD{U_{n-1}, U_n}} := (\cdot)^{\InnaD{\gl(U_{n-1})^{\perp}}}$$ The latter is an additive functor between semisimple categories, and takes polynomial representations of $\gl(U_n)$ to polynomial representations of $\gl(U_{n-1})$.}

Now, the restriction to $\gl(U_{n-1})$ of \InnaD{the $\gl_{n-1}$-module} $\InnaD{\mathfrak{Res}}_{n-1, n}(M)$ is isomorphic to $\InnaD{\mathfrak{Res}}_{\InnaD{U_{n-1}, U_n}}(M \rvert_{\gl(U_n)})$, and thus is a polynomial representation of $\gl(U_{n-1})$. 

Secondly, $\InnaD{\mathfrak{Res}}_{n-1, n}(M)$ is locally nilpotent over $\InnaD{\mathcal{U}(\mathfrak{u}_{\mathfrak{p}_{n-1}}^{+})}$, since $M$ is locally nilpotent over $\InnaD{\mathcal{U}(\mathfrak{u}_{\mathfrak{p}_{n}}^{+})}$ and $\InnaD{\mathcal{U}(\mathfrak{u}_{\mathfrak{p}_{n-1}}^{+}) \subset \mathcal{U}(\mathfrak{u}_{\mathfrak{p}_{n}}^{+})}$.

% \InnaD{Thirdly, we} immediately see that $\InnaD{\mathfrak{Res}}_{n-1, n}(M)$ is of degree $\nu$.
%Thus the module $\InnaD{\mathfrak{Res}}_{n-1, n}(M)$ lies in $Ind- \co^{\mathfrak{p}_{n-1}}_{\nu, \bC^{n-1}}$. 

\mbox{}

\InnaE{It remains} to check that given $M \in \co^{\mathfrak{p}_{n}}_{\nu, \bC^{n}}$, the module $\InnaD{\mathfrak{Res}}_{n-1, n}(M)$ is finitely generated over $Sym(U_{n-1})$. Indeed, we know that there exists a polynomial $\gl(U_n)$-module $E$ and a surjective $\gl(U_n)$-equivariant morphism of $Sym(U_n)$-modules $Sym(U_n) \otimes E \twoheadrightarrow M$. Taking the $\gl(U_{n-1})^{\perp}$-invariants and using Lemma \ref{cor:res_func_tensor}, we conclude that there is a surjective $\gl(U_{n-1})$-equivariant morphism of $Sym(U_{n-1})$-modules 
$$Sym(U_{n-1}) \otimes E^{\gl(U_{n-1})^{\perp}} \twoheadrightarrow \InnaD{\mathfrak{Res}}_{n-1, n}(M)$$ 
Thus $\InnaD{\mathfrak{Res}}_{n-1, n}(M)$ is finitely generated over $Sym(U_{n-1})$.

% 
%  
%  \mbox{}
%  
%  We now complete the proof that $\InnaD{\mathfrak{Res}}_{n-1, n}$ is well-defined. To do this, 
 
 %$\mathcal{U}(\gl_{n-1})$-module $\InnaD{\mathfrak{Res}}_{n-1, n}(M)$ is finitely generated.
\end{proof}

\begin{lemma}\label{lem:Res_exact}
The functor $\InnaD{\mathfrak{Res}}_{n-1, n}: \co^{\mathfrak{p}_{n}}_{\nu, \bC^{n}} \longrightarrow \co^{\mathfrak{p}_{n-1}}_{\nu, \bC^{n-1}}$ is exact.
\end{lemma}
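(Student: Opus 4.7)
The plan is to exploit the equivalent description of $\mathfrak{Res}_{n-1,n}$ given in Lemma \ref{lem:two_equiv_def_Res}, namely the identification $\mathfrak{Res}_{n-1,n} \cong deg_\nu \circ \mathrm{Res}^{\gl_n}_{\gl_{n-1}}$, and reduce the statement to the exactness of an eigenspace projection. The restriction functor $\mathrm{Res}^{\gl_n}_{\gl_{n-1}}$ is tautologically exact (it does not change underlying vector spaces), so the real content is to show that $deg_\nu$ is exact on the image.

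The key observation is that on any $M \in \co^{\mathfrak{p}_n}_{\nu, \bC^n}$ the identity $\id_{\bC^{n-1}} = \id_{\bC^n} - E_{n,n}$ holds, while $\id_{\bC^n}$ acts by the scalar $\nu$. Hence $\mathfrak{Res}_{n-1,n}(M)$ is precisely the $0$-eigenspace of $E_{n,n}$ inside $M$. Now $E_{n,n}$ lies in $\gl(U_n)$, and by the very definition of $\co^{\mathfrak{p}_n}_{\nu, \bC^n}$ the restriction of $M$ to $\gl(U_n)$ is a (possibly infinite) direct sum of Schur functors $S^\lambda U_n$. On each such summand $E_{n,n}$ acts as a diagonal weight operator with non-negative integer eigenvalues, so $E_{n,n}$ is semisimple on $M$ and we get a canonical direct sum decomposition
$$M = \bigoplus_{k \in \bZ_+} M^{(k)}, \qquad M^{(k)} := \ker(E_{n,n} - k \cdot \id_M).$$

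This decomposition is functorial: any morphism $f : M \to M'$ in $\co^{\mathfrak{p}_n}_{\nu, \bC^n}$ is in particular $\gl(U_n)$-equivariant, hence commutes with $E_{n,n}$, and therefore splits into a direct sum of maps $f^{(k)} : M^{(k)} \to M'^{(k)}$. The functor $M \mapsto M^{(0)}$ is thus the projection onto a canonical direct summand, and such a projection is automatically exact: given a short exact sequence $0 \to M' \to M \to M'' \to 0$ in $\co^{\mathfrak{p}_n}_{\nu, \bC^n}$, the underlying sequence of vector spaces is exact and compatible with the $E_{n,n}$-eigenspace decompositions, so passing to the $0$-eigenspace yields the short exact sequence $0 \to M'^{(0)} \to M^{(0)} \to M''^{(0)} \to 0$ in $\co^{\mathfrak{p}_{n-1}}_{\nu, \bC^{n-1}}$ (well-definedness in the target category having already been verified in Lemma \ref{lem:Res_defined}).

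There is no real obstacle here; the only subtlety worth emphasizing is why $E_{n,n}$ acts semisimply on arbitrary, possibly infinitely generated objects of $\co^{\mathfrak{p}_n}_{\nu, \bC^n}$ — and this is precisely the role of the polynomiality hypothesis on the $\gl(U_n)$-action built into the definition of the category. Once this is noted, exactness is immediate from the fact that taking a fixed eigenspace of a semisimply acting operator commutes with kernels, images, and cokernels.
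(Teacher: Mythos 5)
Your proof is correct and follows the same route as the paper's, which also reduces to Lemma \ref{lem:two_equiv_def_Res} together with the exactness of $deg_{\nu}$ (asserted in Section \ref{sec:par_cat_o}). You have simply unpacked why $deg_{\nu}$ is exact — namely, the $\gl(U_n)$-polynomiality of objects in $\co^{\mathfrak{p}_n}_{\nu,\bC^n}$ forces $E_{n,n}$ to act semisimply, so that the passage to its $0$-eigenspace is projection onto a canonical direct summand — a detail the paper leaves to the reader.
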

\begin{proof}
We use Lemma \ref{lem:two_equiv_def_Res}. The functor $deg_{\nu}: \co^{\mathfrak{p}_{n-1}}_{\bC^{n-1}} \rightarrow \co^{\mathfrak{p}_{n-1}}_{\nu, \bC^{n-1}}$ is exact, so the functor $\InnaA{\InnaD{\mathrm{Res}}}_{n-1, n}$ is obviously exact as well.

% Indeed, the definition of $\InnaD{\mathfrak{Res}}_{n-1, n}$ immediately implies that this functor is left-exact. To see that this functor is exact, consider a short exact sequence of modules in $\co^{\mathfrak{p}_{n}}_{\nu, \bC^{n}}$: $$0 \rightarrow M' \stackrel{f}{\longrightarrow} M \stackrel{g}{\longrightarrow} M'' \rightarrow 0$$
%  
%  We then have an exact sequence $$0 \rightarrow (M')^{\gl_{n-1}^{\perp}} \stackrel{f}{\longrightarrow} M^{\gl_{n-1}^{\perp}} \stackrel{g}{\longrightarrow} (M'')^{\gl_{n-1}^{\perp}} $$
%  
%  We would like to say that the map $g$ in the latter sequence is surjective. This is equivalent to requiring that the map $\InnaD{\mathfrak{Res}}_{\InnaD{U_{n-1}, U_n}}(g): \InnaD{\mathfrak{Res}}_{\InnaD{U_{n-1}, U_n}}(M) \rightarrow \InnaD{\mathfrak{Res}}_{\InnaD{U_{n-1}, U_n}}(M'')$ be surjective, which is obviously true, since $\InnaD{\mathfrak{Res}}_{\InnaD{U_{n-1}, U_n}}$ is an additive functor between semisimple categories.
\end{proof}

\begin{lemma}\label{lem:Res_image_Vermas}
 The functor $\InnaD{\mathfrak{Res}}_{n-1, n}$ takes parabolic Verma modules to either parabolic Verma modules or to zero:
 $$\InnaD{\mathfrak{Res}}_{n-1, n}(M_{\mathfrak{p}_n}(\nu-\abs{\lambda}, \lambda))\cong M_{\mathfrak{p}_{n-1}}(\nu-\abs{\lambda}, \lambda)$$ (recall that the latter is a parabolic Verma module for $\gl_{n-1}$ iff $\ell(\lambda) \leq n-2$, and zero otherwise).
 
\end{lemma}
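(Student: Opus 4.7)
The plan is to analyze $\mathfrak{Res}_{n-1,n}(M_{\mathfrak{p}_n}(\nu-|\lambda|,\lambda))$ at the level of the $\gl(U_{n-1})$-module structure, then use the universal property of parabolic Verma modules to produce a natural comparison map, and finally check that matching $\gl(U_{n-1})$-characters forces this map to be an isomorphism.

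By Lemma \ref{lem:gl_u_struct_o_cat}, $M_{\mathfrak{p}_n}(\nu - |\lambda|, \lambda) \cong Sym(U_n) \otimes S^{\lambda} U_n$ as a $\gl(U_n)$-module when $\ell(\lambda) < n$ (and both sides of the claimed isomorphism vanish when $\ell(\lambda) \geq n$). The subalgebra $\gl_{n-1}^{\perp} = \bC E_{n,n}$ lies inside $\gl(U_n)$, so taking $\gl_{n-1}^{\perp}$-invariants on this polynomial $\gl(U_n)$-module coincides with applying $\mathfrak{Res}_{U_{n-1}, U_n}$ in the sense of Definition \ref{def:res_funct_poly_repr}. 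Using the symmetric monoidality from Corollary \ref{cor:res_func_tensor} together with Lemma \ref{lem:res_func_simples} applied to $Sym(U_n) = \bigoplus_k S^k U_n$ and to $S^{\lambda} U_n$, I would obtain
$$\mathfrak{Res}_{n-1,n}(M_{\mathfrak{p}_n}(\nu - |\lambda|, \lambda)) \cong Sym(U_{n-1}) \otimes S^{\lambda} U_{n-1}$$
as $\gl(U_{n-1})$-modules. When $\ell(\lambda) = n-1$ this vanishes because $S^{\lambda} U_{n-1} = 0$, matching $M_{\mathfrak{p}_{n-1}}(\nu - |\lambda|, \lambda) = 0$.

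In the remaining case $\ell(\lambda) \leq n-2$, consider $S^{\lambda} U_{n-1} \subset S^{\lambda} U_n$ sitting inside the ``top piece'' of $M_{\mathfrak{p}_n}(\nu - |\lambda|, \lambda)$ and hence (being in the $E_{n,n}$-zero-weight subspace) inside $\mathfrak{Res}_{n-1,n}(M_{\mathfrak{p}_n}(\nu - |\lambda|, \lambda))$. On this subspace $\gl(U_{n-1})$ acts by $S^{\lambda}$, $\mathfrak{u}^+_{\mathfrak{p}_{n-1}} \subset \mathfrak{u}^+_{\mathfrak{p}_n}$ acts by zero, and $E_{1,1}$ by the scalar $\nu - |\lambda|$; these are exactly the conditions the universal property of the parabolic Verma module demands. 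This yields a $\gl_{n-1}$-equivariant map
$$\phi: M_{\mathfrak{p}_{n-1}}(\nu - |\lambda|, \lambda) \longrightarrow \mathfrak{Res}_{n-1,n}(M_{\mathfrak{p}_n}(\nu - |\lambda|, \lambda))$$
extending the inclusion of $S^{\lambda} U_{n-1}$. The image of $\phi$ contains $S^{\lambda} U_{n-1}$ and is stable under $\mathcal{U}(\mathfrak{u}^-_{\mathfrak{p}_{n-1}}) = Sym(U_{n-1})$; since the target equals $Sym(U_{n-1}) \cdot S^{\lambda} U_{n-1}$ by the computation above, $\phi$ is surjective. Applying Lemma \ref{lem:gl_u_struct_o_cat} again, the source is also $Sym(U_{n-1}) \otimes S^{\lambda} U_{n-1}$ as a $\gl(U_{n-1})$-module, so the two sides decompose into finite-dimensional $\gl(U_{n-1})$-isotypic components with identical multiplicities; the surjection $\phi$ is therefore an isomorphism.

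The main obstacle is the explicit identification of $E_{n,n}$-invariants in $Sym(U_n) \otimes S^{\lambda} U_n$ with $Sym(U_{n-1}) \otimes S^{\lambda} U_{n-1}$, i.e., ensuring that taking $\gl_{n-1}^{\perp}$-invariants at the level of $\gl_n$-modules genuinely restricts to $\mathfrak{Res}_{U_{n-1}, U_n}$ on the underlying polynomial $\gl(U_n)$-structure. This reduces to the symmetric monoidality already established, after which the remaining argument is a routine combination of the universal property of Verma modules and a $\gl(U_{n-1})$-character comparison.
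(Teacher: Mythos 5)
Your proof is correct, and it takes a mildly but genuinely different route from the paper's. The paper uses Lemma \ref{lem:two_equiv_def_Res} to rewrite $\mathfrak{Res}_{n-1,n}$ as $deg_{\nu}\circ\mathrm{Res}^{\gl_n}_{\gl_{n-1}}$, computes the full $\gl_{n-1}$-restriction of $M_{\mathfrak{p}_n}(\nu-\abs{\lambda},\lambda)$ by the PBW-splitting $\mathcal{U}(\mathfrak{u}_{\mathfrak{p}_n}^{-})\cong \mathcal{U}(\mathfrak{u}_{\mathfrak{p}_{n-1}}^{-})\otimes\bC[E_{n,1}]$ together with the branching rule on $S^{\lambda}U_n$, then observes that applying $deg_{\nu}$ kills everything but the single summand with $\lambda'=\lambda$, $k=0$. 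You instead exploit the observation that $\gl_{n-1}^{\perp}=\bC E_{n,n}$ sits inside $\gl(U_n)$, so that the invariants can be computed entirely in the category of polynomial $\gl(U_n)$-modules via the (symmetric monoidal) functor $\mathfrak{Res}_{U_{n-1},U_n}$; this gives the $\gl(U_{n-1})$-structure $Sym(U_{n-1})\otimes S^{\lambda}U_{n-1}$ cheaply, and then you recover the $\gl_{n-1}$-module structure by the universal property of the parabolic Verma module and a multiplicity comparison. Your route avoids having to make sense of the paper's displayed ``restriction'' decomposition as a $\gl_{n-1}$-module (it is really only a $\gl(U_{n-1})\oplus\bC E_{1,1}$-graded identification), at the cost of the extra universal-property/surjectivity step.

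One small point worth tightening: when you assert that the target equals $Sym(U_{n-1})\cdot S^{\lambda}U_{n-1}$ ``by the computation above,'' the symmetric-monoidality computation gives this only as a $\gl(U_{n-1})$-module isomorphism, which by itself does not say that the $\mathcal{U}(\mathfrak{u}_{\mathfrak{p}_{n-1}}^{-})$-action generates the target from $S^{\lambda}U_{n-1}$. One needs the further observation that the identification $M_{\mathfrak{p}_n}(\nu-\abs{\lambda},\lambda)\cong Sym(U_n)\otimes S^{\lambda}U_n$ of Lemma \ref{lem:gl_u_struct_o_cat} is simultaneously a $Sym(U_n)=\mathcal{U}(\mathfrak{u}_{\mathfrak{p}_n}^{-})$-module isomorphism (by PBW), so that after writing $Sym(U_n)=Sym(U_{n-1})\otimes\bC[e_n]$ and taking $E_{n,n}$-invariants, the $Sym(U_{n-1})$-action on the target is still multiplication on the first factor and hence generated by $1\otimes S^{\lambda}U_{n-1}$. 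This is easy to supply and does not affect the correctness of the argument.
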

\begin{proof}
Consider the parabolic Verma module $M_{\mathfrak{p}_n}(\nu-\abs{\lambda}, \lambda)$, \InnaD{where} the Young diagram $\lambda$ \InnaD{has} length at most $n-1$.

By \InnaA{definition} of the parabolic Verma module $M_{\mathfrak{p}_n}(\nu-\abs{\lambda}, \lambda)$, we have: $$M_{\mathfrak{p}_n}(\nu-\abs{\lambda}, \lambda) = \mathcal{U}(\gl_n) \otimes_{\mathcal{U}(\mathfrak{p}_{\InnaD{n}})} S^{\lambda} U_n$$
The branching rule for $\gl(U_{n-1}) \subset \gl(U_n)$ \InnaD{tells} us that $$\left( S^{\lambda} U_n \right) \rvert_{\gl(U_{n-1})} \cong \bigoplus_{\lambda' } S^{\lambda'} U_{n-1}$$
\InnaE{the sum taken over} \InnaD{the set of all Young diagrams obtained from $\lambda$ by removing several boxes, no two in the same column.}

So $$\InnaA{\InnaD{\mathrm{Res}}}^{\gl_n}_{\gl_{n-1}}(M_{\mathfrak{p}_n}(\nu-\abs{\lambda}, \lambda)) \cong \left( \bigoplus_{\lambda' \subset \lambda } M_{\mathfrak{p}_{n-1}}(\nu-\abs{\lambda}, \lambda') \right) \otimes \mathcal{U} \left( \quotient{\mathfrak{u}_{\mathfrak{p}_{n}}^{-}}{\mathfrak{u}_{\mathfrak{p}_{n-1}}^{-}} \right)$$
Here 
\begin{itemize}
 \item $ M_{\mathfrak{p}_{n-1}}(\nu-\abs{\lambda}, \lambda')$ is either a parabolic Verma module for $\gl_{n-1}$ of highest weight $(\nu-\abs{\lambda}, \lambda')$ (note that it is \InnaD{of degree $\nu- \abs{\lambda} + \abs{\lambda'}$}) or zero.
 \item $\InnaD{\gl(U_{n-1})}$ acts trivially on the space $ \mathcal{U} \left( \quotient{\mathfrak{u}_{\mathfrak{p}_{n}}^{-}}{\mathfrak{u}_{\mathfrak{p}_{n-1}}^{-}} \right)$. This space is isomorphic, as a $\bZ_+$-graded vector space, to $\bC[t]$ ($t$ standing for $E_{n, 1} \in \gl_n$) and \InnaD{$E_{1,1}$ acts on it by derivations $-t \frac{d}{dt}$.}
 %$ \gl_{n-1}^{\perp} = \bC E_{n, n}$ acts on it by derivations: $E_{n, n}$ acts by $t \frac{d}{dt}$.
\end{itemize}

%$$\left( \bigoplus_{\lambda' \subset \lambda } M_{\mathfrak{p}_{N-1}}(\nu-\abs{\lambda'}, \lambda') \right) \otimes \mathcal{U} \left( \quotient{\mathfrak{u}_{\mathfrak{p}_{N}}^{-}}{\mathfrak{u}_{\mathfrak{p}_{N-1}}^{-}} \right)$$
%By definition of $M_{\mathfrak{p}_n}(\nu-\abs{\lambda}, \lambda)$

\InnaD{Thus $\id_{\bC^{n-1}} \in \gl_{n}$ acts on the subspace $M_{\mathfrak{p}_{n-1}}(\nu-\abs{\lambda}, \lambda') \otimes t^k \subset M_{\mathfrak{p}_n}(\nu-\abs{\lambda}, \lambda)$} by the scalar $\nu - \abs{\lambda} + \abs{\lambda'} -k$. 
%Thus $E_{n, n}$ acts on the space $M_{\mathfrak{p}_{n-1}}(\nu-\abs{\lambda'}, \lambda') \otimes t^k$ by scalar $\abs{\lambda} - \abs{\lambda'} +k$. 

We now apply the functor $deg_{\nu}$ to the module $\InnaE{\mathrm{Res}^{\gl_n}_{\gl_{n-1}}}(M_{\mathfrak{p}_n}(\nu-\abs{\lambda}, \lambda))$.

To see which subspaces $M_{\mathfrak{p}_{n-1}}(\nu-\abs{\lambda'}, \lambda') \otimes t^k$ of $M_{\mathfrak{p}_n}(\nu-\abs{\lambda}, \lambda)$ will survive after \InnaD{applying $deg_{\nu}$}, we require that $\abs{\lambda} - \abs{\lambda'} +k = 0$. But we are only considering Young diagrams $\lambda'$ such that $\lambda' \subset \lambda$, and non-negative integers $k$, which means that the only relevant case is $\lambda'=\lambda$, $k=0$.

We conclude that $$\InnaD{\mathfrak{Res}}_{n-1, n}(M_{\mathfrak{p}_N}(\nu-\abs{\lambda}, \lambda))\cong M_{\mathfrak{p}_{n-1}}(\nu-\abs{\lambda}, \lambda)$$
\end{proof}
% Thus we proved that $\InnaD{\mathfrak{Res}}_{n-1, n}(M_{\mathfrak{p}_n}(\nu-\abs{\lambda}, \lambda))$ is finitely generated as a $\mathcal{U}(\gl_{n-1})$-module, and we are done.
% 
%  \mbox{}
 
 \begin{lemma}\label{lem:Res_image_simples}
  \InnaD{Given a simple $\gl_n$-module $L_n(\nu-\abs{\lambda}, \lambda)$, $$\InnaD{\mathfrak{Res}}_{n-1, n}(L_n(\nu-\abs{\lambda}, \lambda))\cong L_{n-1}(\nu-\abs{\lambda}, \lambda)$$ (recall that the latter is a simple $\gl_{n-1}$-module iff $\ell(\lambda) \leq n-2$, and zero otherwise).}
 \end{lemma}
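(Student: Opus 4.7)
The plan is to split into cases according to $\ell(\lambda)$, reducing to an induction on the block structure in the main case.

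When $\ell(\lambda) \geq n$ both $L_n(\nu-\abs{\lambda}, \lambda)$ and $L_{n-1}(\nu-\abs{\lambda}, \lambda)$ vanish by definition, so there is nothing to check. When $\ell(\lambda) = n-1$ the target $L_{n-1}(\nu-\abs{\lambda}, \lambda)$ vanishes, and I would deduce that $\mathfrak{Res}_{n-1,n}(L_n(\nu-\abs{\lambda}, \lambda)) = 0$ by writing $L_n$ as a quotient of $M_{\mathfrak{p}_n}(\nu-\abs{\lambda}, \lambda)$, applying the exact functor $\mathfrak{Res}_{n-1,n}$ (Lemma~\ref{lem:Res_exact}) together with Lemma~\ref{lem:Res_image_Vermas} to land inside $M_{\mathfrak{p}_{n-1}}(\nu-\abs{\lambda}, \lambda)$, which is zero for $\ell(\lambda) \geq n-1$.

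The main case $\ell(\lambda) \leq n-2$ I would further split according to the $\stackrel{\nu}{\sim}$-class of $\lambda$. For trivial classes we have $L_n = M_{\mathfrak{p}_n}$ and $L_{n-1} = M_{\mathfrak{p}_{n-1}}$, and Lemma~\ref{lem:Res_image_Vermas} immediately gives the claim. For $\lambda = \lambda^{(i)}$ in a non-trivial class $\{\lambda^{(j)}\}_{j \geq 0}$, I would argue by downward induction on $j$, starting from the largest index $i_1$ with $\ell(\lambda^{(i_1)}) \leq n-2$ (which exists since the $\ell(\lambda^{(j)})$ grow without bound). In the base case $j = i_1$, applying the exact functor $\mathfrak{Res}_{n-1,n}$ to the short exact sequence of Lemma~\ref{lem:parab_Verma_ses} annihilates the leftmost term, since $\ell(\lambda^{(i_1+1)}) \geq n-1$ places it in one of the two previously handled cases; the result identifies the restriction of $L_n(\nu-\abs{\lambda^{(i_1)}}, \lambda^{(i_1)})$ with $M_{\mathfrak{p}_{n-1}}(\nu-\abs{\lambda^{(i_1)}}, \lambda^{(i_1)})$, which in turn equals $L_{n-1}(\nu-\abs{\lambda^{(i_1)}}, \lambda^{(i_1)})$ via the $\gl_{n-1}$-version of Lemma~\ref{lem:parab_Verma_ses} (since the corresponding socle term vanishes).

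For the inductive step $j < i_1$, applying $\mathfrak{Res}_{n-1,n}$ to the $\gl_n$-SES and invoking the induction hypothesis together with Lemma~\ref{lem:Res_image_Vermas} yields a short exact sequence sharing its first two terms with the $\gl_{n-1}$-analogue of Lemma~\ref{lem:parab_Verma_ses}. The main subtlety, which I expect to be the only real obstacle, is identifying the two embeddings $L_{n-1}(\nu-\abs{\lambda^{(j+1)}}, \lambda^{(j+1)}) \hookrightarrow M_{\mathfrak{p}_{n-1}}(\nu-\abs{\lambda^{(j)}}, \lambda^{(j)})$: since $M_{\mathfrak{p}_{n-1}}(\nu-\abs{\lambda^{(j)}}, \lambda^{(j)})$ is indecomposable of length two with simple head $L_{n-1}(\nu-\abs{\lambda^{(j)}}, \lambda^{(j)})$, it has $L_{n-1}(\nu-\abs{\lambda^{(j+1)}}, \lambda^{(j+1)})$ as its unique simple socle, so any embedding of this simple module factors through the socle. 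Hence the two short exact sequences have the same kernel inside $M_{\mathfrak{p}_{n-1}}(\nu-\abs{\lambda^{(j)}}, \lambda^{(j)})$ and isomorphic cokernels, completing the induction and the proof.
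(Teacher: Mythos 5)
Your proof is correct, and it takes a genuinely different route from the paper's for the non-trivial classes. The paper runs an \emph{upward} induction on $i$ starting from $i=0$ (the minimal Young diagram in the class), where $L_n(\nu-\abs{\lambda^{(0)}}, \lambda^{(0)})$ is a finite-dimensional polynomial representation $S^{\tilde{\lambda}(\nu)}\bC^n$; the base case is then settled directly by the classical $\gl_n\downarrow\gl_{n-1}$ branching rule, picking out the unique summand of the correct degree. You instead run a \emph{downward} induction from the largest index $i_1$ with $\ell(\lambda^{(i_1)}) \le n-2$, handling the base case via vanishing of the socle term — this avoids the branching-rule computation entirely and makes the argument more uniform (all cases reduce to the exactness of $\mathfrak{Res}_{n-1,n}$, Lemma~\ref{lem:Res_image_Vermas}, and Lemma~\ref{lem:parab_Verma_ses}, after first disposing of $\ell(\lambda)\ge n-1$). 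The subtlety you flag — identifying the two short exact sequences — is present in the paper's version too, just dualized: the paper's upward step needs $M_{\mathfrak{p}_{n-1}}(\nu-\abs{\lambda^{(i)}}, \lambda^{(i)})$ to have a unique maximal submodule so that knowing the quotient pins down the kernel, while your downward step needs a unique simple socle so that knowing the kernel pins down the quotient; both are standard for length-two parabolic Verma modules, and the paper leaves this implicit while you spell it out. One remark on phrasing: in the $\ell(\lambda)=n-1$ case you say the restriction of $L_n$ ``lands inside'' $M_{\mathfrak{p}_{n-1}}(\nu-\abs{\lambda},\lambda)$, but a quotient map gives a quotient, not a submodule; the argument is still fine since a quotient of the zero module is zero.
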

\begin{proof}
\InnaD{Note that the statement} follows immediately from Lemma \ref{lem:Res_image_Vermas} when $\lambda$ lies in a trivial $\stackrel{\nu}{\sim}$-class; for a non-trivial $\stackrel{\nu}{\sim}$-class $\{\lambda^{(i)}\}_i$, we have short exact sequences (see Lemma \ref{lem:parab_Verma_ses}):
 $$ 0 \rightarrow L_n(\nu-\abs{\lambda^{(i+1)}}, \lambda^{(i+1)}) \rightarrow M_{\mathfrak{p}_n}(\nu-\abs{\lambda^{(i)}}, \lambda^{(i)}) \rightarrow L_n(\nu-\abs{\lambda^{(i)}}, \lambda^{(i)}) \rightarrow 0$$

 Using the exactness of $\InnaD{\mathfrak{Res}}_{n-1, n}$, we can prove the required statement for 
 
 $L_n(\nu-\abs{\lambda^{(i)}}, \lambda^{(i)})$ by induction on $i$, provided the statement is true for $i=0$.
 
 \mbox{}
 
 So it remains to check that 
 $$\InnaD{\mathfrak{Res}}_{n-1, n}(L_n(\nu-\abs{\lambda}, \lambda))\cong L_{n-1}(\nu-\abs{\lambda}, \lambda)$$
 for the minimal Young diagram $\lambda$ in any non-trivial $\stackrel{\nu}{\sim}$-class.
 %(i.e. for any Young diagram $\lambda$ such that  $\tilde{\lambda}(\nu)$ is a Young diagram as well). 

 Recall that in that case, $L_n(\nu-\abs{\lambda}, \lambda) = S^{\tilde{\lambda}(\nu)} \bC^n$ is a finite-dimensional simple representation of $\gl_n$. 
 
 The branching rule for $\gl_n, \gl_{n-1}$ implies that $$\InnaD{\mathrm{Res}^{\gl_n}_{\gl_{n-1}} ( S^{\tilde{\lambda}(\nu)} \bC^n ) \cong \bigoplus_{\mu } S^{\mu} \bC^{n-1} }$$ \InnaE{the sum taken over} \InnaD{the set of all Young diagrams obtained from $\tilde{\lambda}(\nu)$ by removing several boxes, no two in the same column.}
 
 Considering only the summands of degree $\nu$, we see that 
 $$\InnaD{\mathfrak{Res}}_{n-1, n}(L_n(\nu-\abs{\lambda}, \lambda))\cong S^{\tilde{\lambda}(\nu)} \bC^{n-1} \cong L_{n-1}(\nu-\abs{\lambda}, \lambda)$$
 
 %  ; the branching rule for $\gl_{n-1} \subset \gl_n$ tells us that $$\InnaA{\InnaD{\mathrm{Res}}}_n(L_n(\nu-\abs{\lambda}, \lambda)) \cong \bigoplus_{\lambda' \subset \tilde{\lambda}(\nu)} L_{N-1}(\lambda')$$ and thus $$Res_{\nu, N}(L_N(\nu-\abs{\lambda}, \lambda))\cong L_{N-1}(\nu-\abs{\lambda}, \lambda)$$ (since $\id_{\bC^{N-1}}$ acts on $ L_{N-1}(\lambda')$ by scalar $\abs{\lambda'}$).
\end{proof}

 The functor $\InnaD{\mathfrak{Res}}_{n-1, n}: \co^{\mathfrak{p}_n}_{\nu, \bC^n} \rightarrow \co^{\mathfrak{p}_{n-1}}_{\nu, \bC^{n-1}} $ clearly takes polynomial modules to polynomial modules; together with Lemma \ref{lem:Res_exact}, this means that $\InnaD{\mathfrak{Res}}_{n-1, n}$ factors through an exact functor $$\widehat{\InnaD{\mathfrak{Res}}}_{n-1, n}: \widehat{\co}^{\mathfrak{p}_n}_{\nu, \bC^n} \rightarrow \widehat{\co}^{\mathfrak{p}_{n-1}}_{\nu, \bC^{n-1}} $$
 i.e. we have a commutative diagram
 $$ \begin{CD}
     \co^{\mathfrak{p}_n}_{\nu, \bC^n} @>{\InnaD{\mathfrak{Res}}_{n-1, n}}>> \co^{\mathfrak{p}_{n-1}}_{\nu, \bC^{n-1}}\\
     @V{\hat{\pi}_n}VV @V{\hat{\pi}_{n-1}}VV \\
     \widehat{\co}^{\mathfrak{p}_n}_{\nu, \bC^n} @>{\widehat{\InnaD{\mathfrak{Res}}}_{n-1, n}}>> \widehat{\co}^{\mathfrak{p}_{n-1}}_{\nu, \bC^{n-1}}
    \end{CD}$$
(see Subsection \ref{ssec:def_par_cat_O_gl_N} for the definition of the localizations $\hat{\pi}_n$).

\subsection{Specialization functors}\label{ssec:Spec_functors}

\begin{definition}\label{def:Gamma_funct_cat_o}
Let $n \geq 1$. Define the functor $$\Gamma_{n}: \co^{\mathfrak{p}_{\infty}}_{\nu, \bC^{\infty}} \longrightarrow \co^{\mathfrak{p}_{n}}_{\nu, \bC^{n}}, \;\; \Gamma_{n}:= ( \cdot)^{\gl_{n}^{\perp}}$$
\end{definition} 
As before, the subalgebras $\gl_{n}, \gl_{n}^{\perp} \subset \gl_{\infty}$ commute, and therefore the subspace of $\gl_{n}^{\perp}$-invariants of a $\gl_{\infty}$-module automatically carries an action of $\gl_{n}$.

\mbox{}

% Secondly, consider inclusion $\gl_{n-1}^{\perp} \oplus \gl(U_{n-1}) \subset \gl(U_n)$. This inclusion gives us a the restriction functor (see  Definition \ref{def:res_funct_poly_repr}) $$\InnaD{\mathfrak{Res}}_{\InnaD{U_{n-1}, U_n}}: Rep(\gl(U_n))_{poly} \longrightarrow Rep(\gl(U_{n-1}))_{poly} , \;\; \InnaD{\mathfrak{Res}}_{\InnaD{U_{n-1}, U_n}} := (\cdot)^{\gl_{n-1}^{\perp}}$$ The latter is an additive functor between semisimple categories.

% We will now use this information to prove the following lemma:

\begin{lemma}\label{lem:Gamma_defined}
The functor $\Gamma_{ n}: \co^{\mathfrak{p}_{\infty}}_{\nu, \bC^{\infty}} \longrightarrow \co^{\mathfrak{p}_{n}}_{\nu, \bC^{n}}$ is well-defined.
\end{lemma}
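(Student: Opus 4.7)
The plan is to verify the four requirements for $\Gamma_n(M) = M^{\gl_n^{\perp}}$ to lie in $\co^{\mathfrak{p}_{n}}_{\nu, \bC^{n}}$ (see Definition \ref{def:par_cat_O_gl_N}): that it is a $\gl(U_n)$-polynomial $\gl_n$-module, of degree $\nu$, finitely generated over $Sym(U_n)$, and locally nilpotent over $\mathcal{U}(\mathfrak{u}_{\mathfrak{p}_n}^{+})$. The argument will run in parallel to the proof of Lemma \ref{lem:Res_defined}, exploiting the inclusion $\gl_n^{\perp} \subset \gl(U_\infty)$ and the fact that the $\gl_n^{\perp}$-action on $M$ is semisimple (since $M$ is polynomial over $\gl(U_\infty)$).

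I would first dispatch the three easy conditions. For the $\gl(U_n)$-polynomiality: decomposing $M\rvert_{\gl(U_\infty)} = \bigoplus_{\lambda} S^{\lambda} U_\infty \otimes mult_{\lambda}$ and taking $\gl_n^{\perp}$-invariants yields $\bigoplus_{\lambda} S^{\lambda} U_n \otimes mult_{\lambda}$ by Lemma \ref{lem:Gamma_is_tensor} applied via the identification $\gl(U_\infty) \cong \gl_{\infty}$; this lives in $Ind-Rep(\gl(U_n))_{poly}$. For the degree $\nu$ condition: on a summand $S^{\lambda} U_n \subset \Gamma_n(M)$, sitting inside $S^{\lambda} U_\infty \subset M$, the element $E_{1,1}$ still acts by $\nu - \abs{\lambda}$, as inherited from $M$; since $\id_{U_n} = E_{2,2} + \cdots + E_{n,n}$ acts on $S^{\lambda} U_n$ by $\abs{\lambda}$, the operator $\id_{\bC^n}$ acts by $\nu$, as required. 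Local nilpotency over $\mathcal{U}(\mathfrak{u}_{\mathfrak{p}_n}^{+})$ is immediate from the inclusions $\mathfrak{u}_{\mathfrak{p}_n}^{+} \subset \mathfrak{u}_{\mathfrak{p}_\infty}^{+}$ and $\Gamma_n(M) \subset M$.

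The main obstacle is the finite generation of $\Gamma_n(M)$ over $Sym(U_n)$. By hypothesis there is a $\gl_\infty$-equivariant surjection of $Sym(U_\infty)$-modules $Sym(U_\infty) \otimes E \twoheadrightarrow M$ for some $E \in Rep(\gl(U_\infty))_{poly}$. The key observation is that polynomial $\gl(U_\infty)$-modules remain semisimple upon restriction to $\gl_n^{\perp}$, so the functor $(\cdot)^{\gl_n^{\perp}}$ is exact on $Ind-Rep(\gl(U_\infty))_{poly}$ and hence surjectivity is preserved. Combining this with the symmetric monoidality of the invariants functor (again via Lemma \ref{lem:Gamma_is_tensor} applied to $\gl(U_\infty)$), I would obtain a $\gl(U_n)$-equivariant surjection $Sym(U_n) \otimes E^{\gl_n^{\perp}} \twoheadrightarrow \Gamma_n(M)$, with $E^{\gl_n^{\perp}} \in Rep(\gl(U_n))_{poly}$, witnessing the required finite generation.
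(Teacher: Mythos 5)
Your proof is correct and takes essentially the same route as the paper: the paper's own proof of Lemma~\ref{lem:Gamma_defined} simply refers to the proof of Lemma~\ref{lem:Res_defined}, and you have spelled out the parallel argument, with the polynomiality, degree, and local nilpotency checks being routine and the finite-generation step following from exactness and monoidality of $(\cdot)^{\gl_n^\perp}$ exactly as in the $\mathfrak{Res}_{n-1,n}$ case (your citation of Lemma~\ref{lem:Gamma_is_tensor} plays the role that Corollary~\ref{cor:res_func_tensor} plays there).
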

\begin{proof}
The proof is essentially the same as in Lemma \ref{lem:Res_defined}.

\end{proof}
Next, we check that the functor $\Gamma_n$ is exact:
\begin{lemma}\label{lem:Gamma_exact}
The functor $\Gamma_{n}: \co^{\mathfrak{p}_{\infty}}_{\nu, \bC^{\infty}} \longrightarrow \co^{\mathfrak{p}_{n}}_{\nu, \bC^{n}}$ is exact.
\end{lemma}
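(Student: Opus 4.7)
The plan is to reduce the exactness of $\Gamma_n$ on the category $\co^{\mathfrak{p}_\infty}_{\nu, \bC^\infty}$ to the (essentially trivial) exactness of a corresponding invariants functor on the semisimple category of polynomial representations of $\gl(U_\infty)$. The key observation enabling this reduction is that the $\gl_n^\perp$-invariants of a module in $\co^{\mathfrak{p}_\infty}_{\nu, \bC^\infty}$ can be computed from the $\gl(U_\infty)$-action alone.

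First I would verify the set-theoretic containment $\gl_n^\perp \subset \gl(U_\infty)$: every generator $E_{i,j}$ of $\gl_n^\perp$ satisfies $i, j > n \geq 1$, hence $i, j \geq 2$, so the containment holds. Under the natural identification $\gl(U_\infty) \cong \gl_\infty$ obtained by reindexing the basis $e_2, e_3, \ldots$ as $e_1, e_2, \ldots$, the subalgebra $\gl_n^\perp$ corresponds to the subalgebra $\gl_{n-1}^\perp$ of this shifted copy of $\gl_\infty$, and $\gl(U_n)$ corresponds to $\gl_{n-1}$. Consequently, for $M \in \co^{\mathfrak{p}_\infty}_{\nu, \bC^\infty}$, the $\gl(U_n)$-module $\Gamma_n(M) = M^{\gl_n^\perp}$ is identified, via the reindexing, with the output of the functor of Definition \ref{def:Gamma_func} applied to the polynomial $\gl(U_\infty)$-module $M|_{\gl(U_\infty)}$.

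Next, given a short exact sequence in $\co^{\mathfrak{p}_\infty}_{\nu, \bC^\infty}$, I would restrict it to $\gl(U_\infty)$ to obtain a short exact sequence in $Ind-Rep(\gl(U_\infty))_{poly}$. This category is semisimple, since every object decomposes as a (possibly infinite) direct sum of simples $S^\lambda U_\infty$, so the restricted sequence splits as a sequence of $\gl(U_\infty)$-modules. Applying the additive functor $(\cdot)^{\gl_n^\perp}$ to a split short exact sequence preserves exactness, and the well-definedness statement of Lemma \ref{lem:Gamma_defined} ensures that the resulting sequence lies in $\co^{\mathfrak{p}_n}_{\nu, \bC^n}$.

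The main potential obstacle, had the containment $\gl_n^\perp \subset \gl(U_\infty)$ failed, would have been controlling the derived functors of invariants for a subalgebra that is not contained in a locally reductive one acting polynomially. As it stands, that containment holds automatically for $n \geq 1$, and the argument collapses to semisimplicity of $Ind-Rep(\gl(U_\infty))_{poly}$; no cohomological vanishing is required.
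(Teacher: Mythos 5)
Your proof is correct and follows essentially the same route as the paper: both reduce exactness of $\Gamma_n$ to the observation that $\gl_n^\perp$ lies inside $\gl(U_\infty)$, so that $M^{\gl_n^\perp}$ can be computed from the restriction $M\rvert_{\gl(U_\infty)}$, and then invoke semisimplicity of $Ind\text{-}Rep(\gl(U_\infty))_{poly}$ together with additivity of the invariants functor. The only cosmetic difference is that the paper records the identification as the $\gl(U_n)$-module isomorphism $\left( M \rvert_{\gl(U_{\infty})} \right)^{\gl(U_n)^{\perp}} \cong \left( M^{\gl_n^{\perp}} \right) \rvert_{\gl(U_n)}$ rather than phrasing it through the reindexing $e_2, e_3, \ldots \mapsto e_1, e_2, \ldots$, but the content is identical.
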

\begin{proof}
The definition of $\Gamma_{n}$ immediately implies that this functor is left-exact. 
\InnaD{Consider the \InnaE{inclusion $\gl(U_{n}) \oplus \gl(U_{n})^{\perp} \subset \gl(U_{\infty})$}. We then have an isomorphism of $\gl(U_n)$-modules $$ \left( M \rvert_{\gl(U_{\infty})} \right)^{\gl(U_n)^{\perp}} \cong \left( M^{\gl_n^{\perp}} \right) \rvert_{\gl(U_n)} $$

The exactness of $\Gamma_{n}$ then follows from the additivity of the functor}
% To see that this functor is exact, consider a short exact sequence of modules in $\co^{\mathfrak{p}_{\infty}}_{\nu, \bC^{\infty}}$: $$0 \rightarrow M' \stackrel{f}{\longrightarrow} M \stackrel{g}{\longrightarrow} M'' \rightarrow 0$$
%  
%  We then have an exact sequence $$0 \rightarrow (M')^{\gl_{n}^{\perp}} \stackrel{f}{\longrightarrow} M^{\gl_{n}^{\perp}} \stackrel{g}{\longrightarrow} (M'')^{\gl_{n}^{\perp}} $$
 
%  We would like to say that the map $g$ in the latter sequence is surjective. To do this, notice that $\gl_n^{\perp} \cong \gl(U_n)^{\perp} \subset \gl(U{\infty})$. So $g$ is surjective iff $g: M^{\gl(U_n)^{\perp}} \rightarrow M''^{\gl(U_n)^{\perp}}$ be surjective as a map of $\gl(U_n)$-modules. The latter is obviously true, since 
$(\cdot)^{\gl(U_{n})^{\perp}}: Rep(\gl(U_{\infty}))_{poly} \rightarrow Rep(\gl(U_n))_{poly}$, \InnaD{which} is an additive functor between semisimple categories.
\end{proof}

 The functor $\Gamma_{n}: \co^{\mathfrak{p}_{\infty}}_{\nu, \bC^{\infty}} \rightarrow \co^{\mathfrak{p}_{n}}_{\nu, \bC^{n}} $ clearly takes polynomial $\gl_{\infty}$-modules to polynomial $\gl_n$-modules; together with Lemma \ref{lem:Gamma_exact}, this means that $\Gamma_{n}$ factors through an exact functor $$\widehat{\Gamma}_{n}: \widehat{\co}^{\mathfrak{p}_{\infty}}_{\nu, \bC^{\infty}} \rightarrow \widehat{\co}^{\mathfrak{p}_{n}}_{\nu, \bC^{n}} $$
 i.e. we have a commutative diagram
 $$ \begin{CD}
     \co^{\mathfrak{p}_{\infty}}_{\nu, \bC^{\infty}} @>{\Gamma_{n}}>> \co^{\mathfrak{p}_{n}}_{\nu, \bC^{n}}\\
     @V{\hat{\pi}_{\infty}}VV @V{\hat{\pi}_{n}}VV \\
     \widehat{\co}^{\mathfrak{p}_{\infty}}_{\nu, \bC^{\infty}} @>{\widehat{\Gamma}_{n}}>> \widehat{\co}^{\mathfrak{p}_{n}}_{\nu, \bC^{n}}
    \end{CD}$$

\subsection{Stable inverse limit of \texorpdfstring{categories $\co^{\mathfrak{p}_{n}}_{\nu, \bC^{n}}$}{parabolic categories O} and the \texorpdfstring{category $\co^{\mathfrak{p}_{\infty}}_{\nu, \bC^{\infty}}$}{parabolic category O for infinite Lie algebra gl}}\label{ssec:inv_lim_par_cat_O}

The restriction functors $$\InnaD{\mathfrak{Res}}_{n-1, n}: \co^{\mathfrak{p}_{n}}_{\nu, \bC^{n}} \longrightarrow \co^{\mathfrak{p}_{n-1}}_{\nu, \bC^{n-1}}, \;\; \InnaD{\mathfrak{Res}}_{n-1, n} := ( \cdot)^{\gl_{n-1}^{\perp}}$$ descibed in Subsection \ref{ssec:Res_functors} allow us to consider the inverse limit of the system $((\co^{\mathfrak{p}_{n}}_{\nu, \bC^{n}})_{n \geq 1}, (\InnaD{\mathfrak{Res}}_{n-1, n})_{n \geq 2})$. 

Similarly, we can consider the inverse limit of the system $((\widehat{\co}^{\mathfrak{p}_{n}}_{\nu, \bC^{n}})_{n \geq 1}, (\widehat{\InnaD{\mathfrak{Res}}}_{n-1, n})_{n \geq 2})$. 

Let $n \InnaD{\geq 1}$.  
\begin{notation}
 For each $k \in \bZ_+$, let $Fil_k({\co}^{\mathfrak{p}_n}_{\nu, \bC^n})$ (respectively, $Fil_k(\widehat{\co}^{\mathfrak{p}_n}_{\nu, \bC^n})$) be the Serre subcategory of ${\co}^{\mathfrak{p}_n}_{\nu, \bC^n}$ (respectively, $\widehat{\co}^{\mathfrak{p}_n}_{\nu, \bC^n}$) generated by simple modules $L_n(\nu- \abs{\lambda}, \lambda)$ (respectively, $\hat{\pi}_n(L_n(\nu- \abs{\lambda}, \lambda))$), with $\ell(\lambda) \leq k$.
\end{notation}

\InnaD{
This defines $\bZ_+$-filtrations on \InnaD{the objects of} ${\co}^{\mathfrak{p}_n}_{\nu, \bC^n}$, $\widehat{\co}^{\mathfrak{p}_n}_{\nu, \bC^n}$, i.e.
$${\co}^{\mathfrak{p}_n}_{\nu, \bC^n} \cong \varinjlim_{k \in \bZ_+} Fil_k({\co}^{\mathfrak{p}_n}_{\nu, \bC^n})  \, ,\; \;\widehat{\co}^{\mathfrak{p}_n}_{\nu, \bC^n} \cong \varinjlim_{k \in \bZ_+} Fil_k(\widehat{\co}^{\mathfrak{p}_n}_{\nu, \bC^n}) $$}
\begin{lemma}
 Let $n \geq 1$. The functors \InnaD{$${\InnaD{\mathfrak{Res}}}_{n-1, n}: {\co}^{\mathfrak{p}_n}_{\nu, \bC^n} \longrightarrow {\co}^{\mathfrak{p}_{n-1}}_{\nu, \bC^{n-1}}$$
 and
 $$\widehat{\InnaD{\mathfrak{Res}}}_{n-1, n}:\widehat{\co}^{\mathfrak{p}_n}_{\nu, \bC^n} \longrightarrow \widehat{\co}^{\mathfrak{p}_{n-1}}_{\nu, \bC^{n-1}}$$ are both shortening and $\bZ_+$-filtered functors between finite-length categories with $\bZ_+$-filtrations on objects (see \cite{EA1} for the relevant definitions).}
\end{lemma}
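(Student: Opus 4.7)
The statement has four ingredients to verify: (a) the source and target are finite-length abelian categories, (b) the subcategories $Fil_k$ give a genuine $\mathbb{Z}_+$-filtration on objects, (c) the functors respect this filtration (``filtered''), and (d) they are shortening. Given that Lemmas \ref{lem:Res_exact} and \ref{lem:Res_image_simples} already record the exactness and the behavior on simple objects, the role of the proof is essentially to stitch these inputs together.

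For (a), I would argue that every object $M \in {\mathcal{O}}^{\mathfrak{p}_n}_{\nu, \bC^n}$ has a finite Jordan--H\"older series. Since $M$ is finitely generated over $Sym(U_n)$ and locally nilpotent over $\mathcal{U}(\mathfrak{u}_{\mathfrak{p}_n}^+)$ with polynomial $\gl(U_n)$-action of degree $\nu$, its $\gl(U_n)$-isotypic decomposition has finite multiplicities and is supported on $\lambda$ with $\ell(\lambda) \leq n-1$. Invoking Lemma \ref{lem:gl_u_struct_o_cat}, each parabolic Verma module has the form $Sym(U_n) \otimes S^\lambda U_n$ with a two-step socle filtration (Lemma \ref{lem:parab_Verma_ses}), and $M$ is a quotient of a finite direct sum of parabolic Vermas. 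Corollary \ref{cor:simple_Verma_basis_Groth_group} then bounds the number of composition factors of $M$, giving finite length. Finite length of $\widehat{\mathcal{O}}^{\mathfrak{p}_n}_{\nu,\bC^n}$ follows since $\hat\pi_n$ is an exact localization by a Serre subcategory, hence preserves finite length.

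For (b), note that an object lies in $Fil_k$ iff all of its composition factors are $L_n(\nu-|\lambda|,\lambda)$ with $\ell(\lambda)\leq k$; finiteness of the Jordan--H\"older series from (a) yields $\bigcup_k Fil_k = {\mathcal{O}}^{\mathfrak{p}_n}_{\nu,\bC^n}$, and the same for the hatted version. For (c), given $M \in Fil_k({\mathcal{O}}^{\mathfrak{p}_n}_{\nu, \bC^n})$, pick a Jordan--H\"older series $0 = M_0 \subset M_1 \subset \dots \subset M_r = M$ with $M_i/M_{i-1} \cong L_n(\nu-|\lambda_i|,\lambda_i)$ and $\ell(\lambda_i) \leq k$. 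Exactness of $\mathfrak{Res}_{n-1,n}$ (Lemma \ref{lem:Res_exact}) produces a filtration of $\mathfrak{Res}_{n-1,n}(M)$, and by Lemma \ref{lem:Res_image_simples} each successive quotient is either $L_{n-1}(\nu-|\lambda_i|,\lambda_i)$ (still with $\ell(\lambda_i) \leq k$) or zero. Hence $\mathfrak{Res}_{n-1,n}$ restricts to $Fil_k \to Fil_k$, giving the filtered property, and for (d) the same computation shows that the length of $\mathfrak{Res}_{n-1,n}(M)$ equals the length of $M$ minus the number of composition factors $L_n(\nu-|\lambda_i|,\lambda_i)$ with $\ell(\lambda_i) = n-1$, which is the shortening property (in particular, $\mathfrak{Res}_{n-1,n}$ sends simples to simples or zero, which is the usual hallmark of a shortening functor).

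For the hatted version $\widehat{\mathfrak{Res}}_{n-1,n}$, the simple objects of $\widehat{\mathcal{O}}^{\mathfrak{p}_n}_{\nu,\bC^n}$ are exactly the images under $\hat\pi_n$ of those $L_n(\nu-|\lambda|,\lambda)$ whose underlying module is not polynomial (the polynomial simples being killed by $\hat\pi_n$). The commutative diagram at the end of Subsection \ref{ssec:Res_functors} together with the fact that $\mathfrak{Res}_{n-1,n}$ sends polynomial modules to polynomial modules shows that $\widehat{\mathfrak{Res}}_{n-1,n}$ sends $\hat\pi_n(L_n(\nu-|\lambda|,\lambda))$ to $\hat\pi_{n-1}(L_{n-1}(\nu-|\lambda|,\lambda))$ or to zero; this, together with the exactness of $\widehat{\mathfrak{Res}}_{n-1,n}$ and the definition of $Fil_k(\widehat{\mathcal{O}}^{\mathfrak{p}_n}_{\nu,\bC^n})$, gives the filtered and shortening properties for $\widehat{\mathfrak{Res}}_{n-1,n}$ by the same argument. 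The main obstacle I expect is not in (c) or (d), which are essentially bookkeeping on top of Lemmas \ref{lem:Res_exact} and \ref{lem:Res_image_simples}, but rather in (a): one must carefully justify finite length for these infinite-dimensional modules, since a priori $Sym(U_n)$-finite generation only bounds the number of Verma composition factors and one needs Corollary \ref{cor:simple_Verma_basis_Groth_group} together with Lemma \ref{lem:parab_Verma_ses} to upgrade this to finiteness of the simple composition series.
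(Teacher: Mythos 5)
Your proof is correct and uses the same essential mechanism as the paper: reduce everything to the behavior of $\mathfrak{Res}_{n-1,n}$ on simple objects (Lemma \ref{lem:Res_image_simples}) together with exactness (Lemma \ref{lem:Res_exact}). The paper's proof is a single line citing Lemma \ref{lem:Res_image_simples}, taking finite length of parabolic category $\co$ and the filtration setup as already in place, whereas you additionally spell out finite length via the $Sym(U_n)$-finite-generation argument, the exhaustion property of the $Fil_k$, and the bookkeeping on composition factors — all correct, just more verbose than the paper needs.
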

\begin{proof}
\InnaD{These statements follow directly from Lemma \ref{lem:Res_image_simples}, which tells us that} 

$\InnaD{\mathfrak{Res}}_{n-1, n}(L_n(\nu- \abs{\lambda}, \lambda)) \cong L_{n-1}(\nu- \abs{\lambda}, \lambda)$.
\end{proof}

% 
% 
% Let $i \leq j$, $i, j \in \bZ_+$. Define $$\widehat{Res}_{\nu,i, j}:= \widehat{Res}_{\nu, i+1} \circ ...\circ \widehat{Res}_{\nu, j}$$ This is a $\bZ_+$-filtered functor between $\bZ_+$-filtered categories $\widehat{\co^{\mathfrak{p}_j}}_{\nu, \bC^j, f.g.}$, $\widehat{\co^{\mathfrak{p}_i}}_{\nu, \bC^i, f.g.}$.

We can now consider the inverse \InnaD{limits} of the $\bZ_+$-filtered systems \InnaD{$(({\co}^{\mathfrak{p}_n}_{\nu, \bC^n})_{\InnaD{n \geq 1}}, ({\InnaD{\mathfrak{Res}}}_{n-1, n})_{n \geq \InnaD{2}})$ and} $((\widehat{\co}^{\mathfrak{p}_n}_{\nu, \bC^n})_{\InnaD{n \geq 1}}, (\widehat{\InnaD{\mathfrak{Res}}}_{n-1, n})_{n \geq \InnaD{2}})$. \InnaE{By \cite[Section 6]{EA1}, these} limits are \InnaE{equivalent to the respective} restricted invverse limits $$\varprojlim_{\InnaD{n \geq 1}, \text{ restr}} {\co}^{\mathfrak{p}_n}_{\nu, \bC^n}, \; \; \;  \varprojlim_{\InnaD{n \geq 1}, \text{ restr}} \widehat{\co}^{\mathfrak{p}_n}_{\nu, \bC^n}$$ 

The functors $\Gamma_n$ described above induce exact \InnaD{functors}
 $$\Gamma_{\text{lim}}: \co^{\mathfrak{p}_{\infty}}_{\nu, \bC^{\infty}} \longrightarrow \varprojlim_{\InnaD{n \geq 1}} \co^{\mathfrak{p}_{n}}_{\nu, \bC^{n}}$$ 
 \InnaD{and
 $$\widehat{\Gamma}_{\text{lim}}: \widehat{\co}^{\mathfrak{p}_{\infty}}_{\nu, \bC^{\infty}} \longrightarrow \varprojlim_{\InnaD{n \geq 1}} \widehat{\co}^{\mathfrak{p}_{n}}_{\nu, \bC^{n}}$$ }
 
 We would like to show that this functor is an equivalence of categories:
 %factors through the restricted inverse limit of the categories $\co^{\mathfrak{p}_{n+1}}_{\nu, \bC^{n+1}}$.
\begin{proposition}\label{prop:inv_lim_gl_infty_par_cat}
 The functors $\Gamma_n$ induce an equivalence
 $$\Gamma_{\text{lim}}: \co^{\mathfrak{p}_{\infty}}_{\nu, \bC^{\infty}} \longrightarrow \varprojlim_{\InnaD{n \geq 1}, \text{ restr}} \co^{\mathfrak{p}_{n}}_{\nu, \bC^{n}}$$ 
\end{proposition}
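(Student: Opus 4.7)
The plan is to prove $\Gamma_{\text{lim}}$ is an equivalence by establishing three properties: (i) the functor lands in the restricted inverse limit; (ii) it is fully faithful; (iii) it is essentially surjective. As a preliminary step, I would first prove the analogues of Lemmas \ref{lem:Res_image_Vermas} and \ref{lem:Res_image_simples} for the specialization functor, namely that for any Young diagram $\lambda$,
\[
\Gamma_n(M_{\mathfrak{p}_{\infty}}(\nu-\abs{\lambda},\lambda)) \cong M_{\mathfrak{p}_n}(\nu-\abs{\lambda},\lambda), \qquad \Gamma_n(L_{\infty}(\nu-\abs{\lambda},\lambda)) \cong L_n(\nu-\abs{\lambda},\lambda),
\]
each vanishing when $\ell(\lambda) \geq n$. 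The Verma case follows from Lemma \ref{lem:gl_u_struct_o_cat}: write $M_{\mathfrak{p}_{\infty}}(\nu-\abs{\lambda},\lambda) \cong Sym(U_\infty)\otimes S^\lambda U_\infty$ as a $\gl(U_\infty)$-module, apply the branching $(S^\mu U_\infty)^{\gl(U_n)^{\perp}} \cong S^\mu U_n$, and then pick out the degree-$\nu$ piece exactly as in the proof of Lemma \ref{lem:Res_image_Vermas}. The simple case reduces to this via Lemma \ref{lem:parab_Verma_ses} and exactness of $\Gamma_n$ (Lemma \ref{lem:Gamma_exact}), using induction along $\stackrel{\nu}{\sim}$-classes.

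For (i), any $M \in \co^{\mathfrak{p}_\infty}_{\nu,\bC^\infty}$ is finitely generated over $Sym(U_\infty)$ by a polynomial $\gl(U_\infty)$-module, so the Schur components $S^\mu U_\infty$ appearing in $M$ have $\ell(\mu)$ bounded by some fixed $k$. The preliminary step then forces $\Gamma_n(M) \in Fil_k(\co^{\mathfrak{p}_n}_{\nu,\bC^n})$ uniformly in $n$, so the image lies in the restricted inverse limit. For (ii), since $M|_{\gl(U_\infty)}$ lies in $Ind\text{-}Rep(\gl(U_\infty))_{poly}$, one has the equality of vector spaces $M = \bigcup_n M^{\gl_n^{\perp}} = \bigcup_n \Gamma_n(M)$. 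A $\gl_\infty$-morphism $f: M \to N$ is then the same data as a compatible family of $\gl_n$-equivariant maps $\Gamma_n(f): \Gamma_n(M) \to \Gamma_n(N)$, because $\gl_\infty = \bigcup_n \gl_n$ and the restriction of $f$ to $\Gamma_n(M)$ lands in $\Gamma_n(N)$; this gives a bijection on Hom spaces.

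For (iii), given a compatible system $(M_n)$ in the restricted inverse limit with all $M_n \in Fil_k$, I would define $M_\infty := \varinjlim_n M_n$ along the canonical embeddings $M_{n-1} \cong \mathfrak{Res}_{n-1,n}(M_n) \hookrightarrow M_n$, equipped with the $\gl_\infty$-action inherited from $\gl_\infty = \bigcup_n \gl_n$. The polynomial $\gl(U_\infty)$-structure and degree-$\nu$ condition transfer to $M_\infty$ by Proposition \ref{prop:inv_lim_cat_poly_rep} (applied componentwise to the underlying $\gl(U_n)$-modules), while local nilpotence of $\mathfrak{u}^+_{\mathfrak{p}_\infty}$ holds because its generators strictly lower the Schur-size grading and each vector lies in finitely many components $S^\mu U_\infty$ of bounded $\abs{\mu}$. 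The main obstacle is finite generation over $Sym(U_\infty)$. My approach would be to exploit that each $M_n$, having finite length in $Fil_k$, admits a surjection $\bigoplus_j M_{\mathfrak{p}_n}(\nu-\abs{\lambda_j},\lambda_j) \twoheadrightarrow M_n$ with $\ell(\lambda_j) \leq k$, and that once $n$ exceeds a threshold depending on $k$ the functor $\mathfrak{Res}_{n-1,n}$ is faithful on the generating parabolic Vermas (by Lemma \ref{lem:Res_image_Vermas}), so the generating data stabilizes. Lifting such a stable presentation via the preliminary identification of $\Gamma_n$ on parabolic Vermas yields a surjection $\bigoplus_j M_{\mathfrak{p}_\infty}(\nu-\abs{\lambda_j},\lambda_j) \twoheadrightarrow M_\infty$, establishing finite generation and completing the construction of a preimage of $(M_n)$ under $\Gamma_{\text{lim}}$.
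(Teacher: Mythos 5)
Your plan matches the paper's strategy in its essentials: both construct the inverse functor by sending a compatible system $((M_n)_n,(\phi_{n-1,n})_n)$ to the union $M_\infty := \bigcup_n M_n$ along the inclusions $M_{n-1}\hookrightarrow M_n$ induced by the $\phi_{n-1,n}$, and both identify finite generation over $Sym(U_\infty)$ as the one real technical obstruction. Parts (i) and (ii) are fine, and the preliminary computation of $\Gamma_n$ on parabolic Vermas and simples (which the paper uses implicitly) is a correct and helpful addition. Where the paper differs is in how it handles the finite-generation step: it invokes the fact that every object of the restricted inverse limit has finite length with simple constituents of the form $((L_n(\nu-\abs{\lambda},\lambda))_n,\ldots)$, and then uses Corollary \ref{cor:simple_Verma_basis_Groth_group} to reduce to checking the two explicit families --- polynomial simples (handled by Proposition \ref{prop:inv_lim_cat_poly_rep}) and parabolic Vermas (handled by $\bigcup_n Sym(U_n)\otimes S^\lambda U_n \cong Sym(U_\infty)\otimes S^\lambda U_\infty$) --- and then closes up under extensions.

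Your version of (iii) has a gap at the ``generating data stabilizes'' step. The inference ``$\mathfrak{Res}_{n-1,n}$ is faithful on the generating parabolic Vermas $\Rightarrow$ the presentations $P_n\twoheadrightarrow M_n$ stabilize'' does not follow: faithfulness of $\mathfrak{Res}_{n-1,n}$ on the $M_{\mathfrak{p}_n}(\nu-\abs{\lambda_j},\lambda_j)$ (i.e.\ they are sent to nonzero Vermas) only guarantees the composition factors survive, not that a choice of surjections $\psi_n : P_n\twoheadrightarrow M_n$ can be made compatible with the $\phi_{n-1,n}$. What you actually need is the stronger stabilization property behind the restricted inverse limit (supplied by \cite{EA1}): that for $n$ large relative to $k$, $\mathfrak{Res}_{n-1,n}$ restricts to an \emph{equivalence} $Fil_k(\co^{\mathfrak{p}_n}_{\nu,\bC^n})\to Fil_k(\co^{\mathfrak{p}_{n-1}}_{\nu,\bC^{n-1}})$, so that both the tops of $M_n$ and the Hom-spaces $\Hom(P_n,M_n)$ stabilize, allowing a compatible choice of $\psi_n$. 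With this correction your argument goes through; alternatively, the paper's route through composition factors sidesteps the need to choose compatible presentations at all, and is the cleaner write-up. You should also record (trivially) that $M_\infty$ is of degree $\nu$, which your sketch omits.
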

\begin{proof}
First of all, we need to \InnaD{check that this functor is well-defined. Namely, we need to show} that for any $M \in \co^{\mathfrak{p}_{\infty}}_{\nu, \bC^{\infty}}$, the sequence $\{ \ell_{\mathcal{U}(\gl_{n+1})}(\Gamma_{n+1}(M)) \}_n$ stabilizes. In fact, it is enough to show that this sequence is bounded (since it is obviously increasing).
 
Recall that we have a surjective map of $Sym(\mathfrak{u}_{\mathfrak{p}_{\infty}}^{-})$-modules $Sym (\mathfrak{u}_{\mathfrak{p}_{\infty}}^{-}) \otimes E \twoheadrightarrow M$ for some $E \in Rep(\gl(U_{\infty}))_{poly}$. Since $\Gamma_{n+1}$ is exact, it gives us a surjective map $Sym (\mathfrak{u}_{\mathfrak{p}_{n+1}}^{-}) \otimes \Gamma_{n+1}(E) \twoheadrightarrow \Gamma_{n+1}(M)$ for any $n \geq 0$, with $\Gamma_{n+1}(E)$ being a polynomial $\gl(U_{n+1})$-module.
 
 Now, $$  \ell_{\mathcal{U}(\gl_{n+1})}(\Gamma_{n+1}(M)) \leq \ell_{\mathcal{U}(\mathfrak{u}_{\mathfrak{p}_{n+1}}^{-})} (\Gamma_{n+1}(M)) \leq \ell_{\mathcal{U}(\gl(U_{n+1}))} (\Gamma_{n+1}(E))$$
 The sequence $\{ \ell_{\mathcal{U}(\gl(U_{n+1}))} (\Gamma_{n+1}(E)) \}_{n \geq 0}$ is bounded by Proposition \ref{prop:inv_lim_cat_poly_rep}, and thus the sequence $\{ \ell_{\mathcal{U}(\gl_{n+1})}(\Gamma_{n+1}(M)) \}_n$ is bounded as well. 
 
 \mbox{}
 We now show that $\Gamma_{\text{lim}}$ is an equivalence.
 
A construction similar to the one appearing in \InnaD{\cite[Section 7.5]{EA1}} gives a left-adjoint to the functor $\Gamma_{\text{lim}}$; that is, we will define a functor 
 $$\Gamma_{\text{lim}}^*: \varprojlim_{\InnaD{n \geq 1}, \text{ restr}} {\co}^{\mathfrak{p}_{n}}_{\nu, \bC^{n}}   \longrightarrow {\co}^{\mathfrak{p}_{\infty}}_{\nu, \bC^{\infty}} $$
 
 \InnaD{Let} $((M_n)_{n\geq 1}, (\phi_{n-1, n})_{n \geq 2})$ \InnaD{be an object} of $\varprojlim_{\InnaD{n \geq 1}, \text{ restr}} {\co}^{\mathfrak{p}_{n}}_{\nu, \bC^{n}}$. 
 
 \InnaD{The isomorphisms $\phi_{n-1, n} : \mathfrak{Res}_{n-1, n}(M_n) \stackrel{\sim}{\rightarrow} M_{n-1}$ define $\gl_{n-1}$-equivariant inclusions $M_{n-1} \hookrightarrow M_n$.} Consider \InnaD{the} vector space $$M := \bigcup_{n \geq 1} M_n$$ \InnaD{which has} a natural action of $\gl_{\infty}$ \InnaD{on it}.
 
 It is easy to see that the obtained $\gl_{\infty}$-module $M$ is a direct sum of polynomial $\gl(U_{\infty})$-modules, and is locally nilpotent over \InnaD{the algebra} $$\mathcal{U}(\mathfrak{u}_{\mathfrak{p}_{\infty}}^{+}) \cong Sym(U_{\infty, *}) \InnaD{\cong \bigcup_{n \geq 1} Sym(U_{n}^*)}$$ 
 
 \mbox{}
 \InnaD{We now prove the following sublemma:}
 \begin{sublemma}
  \InnaD{Let $((M_n)_{n\geq 1}, (\phi_{n-1, n})_{n \geq 2})$ \InnaD{be an object} of $\varprojlim_{\InnaD{n \geq 1}, \text{ restr}} {\co}^{\mathfrak{p}_{n}}_{\nu, \bC^{n}}$. Then $M := \bigcup_{n \geq 1} M_n$ is a finitely generated module over $Sym(U_{\infty}) \cong \mathcal{U}(\mathfrak{u}_{\mathfrak{p}_{\infty}}^{-})$. }
 \end{sublemma}
\begin{proof}
 \InnaD{Recall from \cite[Section 4]{EA1} that all the objects in the abelian category $\varprojlim_{\InnaD{n \geq 1}, \text{ restr}} {\co}^{\mathfrak{p}_{n}}_{\nu, \bC^{n}}$ have finite length, and that the simple objects in this category are exactly those of the form $((L_n(\nu- \abs{\lam}, \lam))_{n\geq 1}, (\phi_{n-1, n})_{n \geq 2})$ for a fixed Young diagram $\lam$. So we only need to check that applying the above construction to these simple objects gives rise to finitely generated modules over $Sym(U_{\infty}) \cong \mathcal{U}(\mathfrak{u}_{\mathfrak{p}_{\infty}}^{-})$. 
 
 Using \InnaE{Corollary \ref{cor:simple_Verma_basis_Groth_group}} we now reduce the proof of the sublemma to proving the following two statements:
 
 \begin{itemize}
  \item \InnaE{Let $\lam$ be a fixed Young diagram and let} $((L_n(\nu- \abs{\lam}, \lam))_{n\geq 1}, (\phi_{n-1, n})_{n \geq 2})$ be a simple object in $\varprojlim_{\InnaD{n \geq 1}, \text{ restr}} {\co}^{\mathfrak{p}_{n}}_{\nu, \bC^{n}}$ in which $L_n(\nu- \abs{\lam}, \lam)$ is polynomial for every $n$ (i.e. $\lam$ is minimal in its non-trivial $\stackrel{\nu}{\sim}$-class). 
  
  Then $L := \bigcup_{n \geq 1} L_n(\nu- \abs{\lam}, \lam)$ is a polynomial $\gl_{\infty}$-module (in particular, a finitely generated module over $Sym(U_{\infty}) \cong \mathcal{U}(\mathfrak{u}_{\mathfrak{p}_{\infty}}^{-})$).
  
  \item Let $\lam$ be a fixed Young diagram and let $(( M_{\mathfrak{p}_n}(\nu- \abs{\lam}, \lam) )_{n\geq 1}, (\phi_{n-1, n})_{n \geq 2})$ be an object of $\varprojlim_{\InnaD{n \geq 1}, \text{ restr}} {\co}^{\mathfrak{p}_{n}}_{\nu, \bC^{n}}$ (this is a sequence of ``compatible'' parabolic Verma modules). Then $$M := \bigcup_n M_{\mathfrak{p}_n}(\nu- \abs{\lam}, \lam)$$ is a finitely generated module over $Sym(U_{\infty}) \cong \mathcal{U}(\mathfrak{u}_{\mathfrak{p}_{\infty}}^{-})$.
 \end{itemize}
 %in which case the statement follows from Proposition \ref{prop:inv_lim_cat_poly_rep}.
 \InnaE{The first statement follows immediately from} Proposition \ref{prop:inv_lim_cat_poly_rep} \InnaE{(cf. \cite[Section 7.5]{EA1})}. 
 
 To prove the second statement, recall that $$M_{\mathfrak{p}_n}(\nu- \abs{\lam}, \lam) \cong Sym(U_n) \otimes S^{\lambda} U_n$$ (Lemma \ref{lem:gl_u_struct_o_cat}). So $$M := \bigcup_n M_{\mathfrak{p}_n}(\nu- \abs{\lam}, \lam) \cong \bigcup_n Sym(U_n) \otimes S^{\lambda} U_n \cong Sym(U_{\infty}) \otimes S^{\lambda} U_{\infty}$$ which is clearly a finitely generated module over $Sym(U_{\infty}) \cong \mathcal{U}(\mathfrak{u}_{\mathfrak{p}_{\infty}}^{-})$.
 }
 \end{proof}
 
 This allows us to define the functor \InnaD{$\Gamma_{\text{lim}}^*$ by setting 
 $$\Gamma_{\text{lim}}^*( \, (M_n)_{n\geq 1}, \, (\phi_{n-1, n})_{n \geq 2} \, ) := \bigcup_{n \geq 1} M_n$$ and requiring that it act on morphisms accordingly.}
 
 The definition of $ \Gamma_{\text{lim}}^*$ gives us a natural transformation $$ \Gamma_{\text{lim}}^* \circ \Gamma_{\text{lim}} \stackrel{\sim}{\longrightarrow} \id_{{\co}^{\mathfrak{p}_{\infty}}_{\nu, \bC^{\infty}}} $$ Restricting the action of $\gl_{\infty}$ to $\gl(U_{\infty})$ and using Proposition \ref{prop:inv_lim_cat_poly_rep}, we conclude that this natural transformation is an isomorphism.
 
 Notice that the definition of $ \Gamma_{\text{lim}}^*$ implies that this functor is faithful. Thus we conclude that the functor $ \Gamma_{\text{lim}}^*$ is an equivalence of categories, and so is $ \Gamma_{\text{lim}}$.
 
\end{proof}
\begin{proposition}\label{prop:inv_lim_hat_gl_infty_par_cat}
 The functors $\widehat{\Gamma}_n$ induce an equivalence
 $$\widehat{\Gamma}_{\text{lim}}: \widehat{\co}^{\mathfrak{p}_{\infty}}_{\nu, \bC^{\infty}} \rightarrow \varprojlim_{\InnaD{n \geq 1}, \text{ restr}} \widehat{\co}^{\mathfrak{p}_{n}}_{\nu, \bC^{n}} $$ 
\end{proposition}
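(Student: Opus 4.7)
The plan is to deduce this proposition from Proposition \ref{prop:inv_lim_gl_infty_par_cat}, which gives the analogous equivalence before localization, by showing that the Serre localization at polynomial modules commutes with the restricted inverse limit in this setting. By construction, $\widehat{\Gamma}_{\text{lim}}$ is induced on quotients by the equivalence $\Gamma_{\text{lim}}$, so once this commutativity is established, the conclusion is formal.

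First, I would identify the Serre subcategory of polynomial $\gl_{\infty}$-modules in $\co^{\mathfrak{p}_{\infty}}_{\nu, \bC^{\infty}}$ with its image under $\Gamma_{\text{lim}}$ in $\varprojlim_{n \geq 1, \text{restr}} \co^{\mathfrak{p}_n}_{\nu, \bC^n}$. Using Lemma \ref{lem:Gamma_is_tensor} and Proposition \ref{prop:inv_lim_cat_poly_rep}, a $\gl_\infty$-module $M \in \co^{\mathfrak{p}_{\infty}}_{\nu, \bC^{\infty}}$ is polynomial iff $\Gamma_n(M)$ is a polynomial $\gl_n$-module for every $n \geq 1$; equivalently, the corresponding object in the restricted inverse limit has each component polynomial. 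Thus the Serre subcategory of polynomial modules in the inverse limit is the restricted inverse limit of the Serre subcategories of polynomial modules in the $\co^{\mathfrak{p}_n}_{\nu, \bC^n}$ (using that, by Lemma \ref{lem:Res_image_simples}, the restriction functors $\mathfrak{Res}_{n-1,n}$ preserve polynomiality, and in particular restrict to shortening functors between these Serre subcategories).

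Second, I would prove the key interchange statement: given a $\bZ_+$-filtered inverse system of finite-length categories $(\mathcal{C}_n, \mathfrak{Res}_{n-1,n})$ satisfying the conditions of \cite[Section 6]{EA1}, and a compatible system of Serre subcategories $\mathcal{A}_n \subset \mathcal{C}_n$ preserved by the shortening functors and respecting the filtration, the natural comparison functor
$$\varprojlim_{n \geq 1, \text{restr}} (\mathcal{C}_n / \mathcal{A}_n) \;\longrightarrow\; \bigl( \varprojlim_{n \geq 1, \text{restr}} \mathcal{C}_n \bigr) \Big/ \bigl( \varprojlim_{n \geq 1, \text{restr}} \mathcal{A}_n \bigr)$$
is an equivalence. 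Since each $\mathcal{C}_n$ is finite length and its simples split into those lying in $\mathcal{A}_n$ and those with simple image in $\mathcal{C}_n/\mathcal{A}_n$, a system $(\hat{M}_n)_{n \geq 1}$ of objects in the quotients can be lifted to a compatible system $(M_n)_{n \geq 1}$ in the $\mathcal{C}_n$ by choosing, at each level, a representative with no nonzero subquotient in $\mathcal{A}_n$ (the "$\mathcal{A}$-torsion-free" lift), and the shortening property ensures the isomorphisms $\widehat{\mathfrak{Res}}_{n-1,n}(\hat{M}_n) \cong \hat{M}_{n-1}$ lift accordingly.

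Applying this interchange principle with $\mathcal{C}_n = \co^{\mathfrak{p}_n}_{\nu, \bC^n}$ and $\mathcal{A}_n$ equal to the Serre subcategory of polynomial modules, and combining with Proposition \ref{prop:inv_lim_gl_infty_par_cat}, yields the claimed equivalence $\widehat{\Gamma}_{\text{lim}}$. The main obstacle I expect is the interchange step: one must verify carefully that the restricted inverse limit structure (both the $\bZ_+$-filtration on objects and the shortening property of the transition functors) survives passage to Serre quotients, and that a compatible system of quotient objects admits a genuinely compatible lift. The existence of canonical "minimal" lifts (using that $\co^{\mathfrak{p}_n}_{\nu, \bC^n}$ is noetherian and that polynomial simples can be removed without affecting the non-polynomial composition factors) is what makes this work.
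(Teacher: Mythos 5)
Your proposal is correct and takes essentially the same route as the paper. The paper's own proof (a) first checks well-definedness of $\widehat{\Gamma}_{\text{lim}}$ by bounding lengths $\ell_{\widehat{\co}^{\mathfrak{p}_n}_{\nu,\bC^n}}(\hat\pi_n(\Gamma_n(M)))$ via Proposition~\ref{prop:inv_lim_gl_infty_par_cat}, and then (b) writes the commutative diagram whose rows are the Serre-quotient sequences
$Rep(\gl_N)_{poly,\nu}\to\co^{\mathfrak{p}_N}_{\nu,\bC^N}\to\widehat{\co}^{\mathfrak{p}_N}_{\nu,\bC^N}$ (for $N=\infty$) and the corresponding limit sequence, and concludes since the left and middle vertical arrows are equivalences by Propositions~\ref{prop:inv_lim_cat_poly_rep} and~\ref{prop:inv_lim_gl_infty_par_cat}. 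The step that you isolate as the key interchange --- that the restricted inverse limit of Serre quotients agrees with the Serre quotient of the restricted inverse limit by the restricted inverse limit of the Serre subcategories --- is exactly what the paper asserts in the phrase that the bottom row of the diagram is ``exact,'' which it states without detailed proof. So your write-up expands exactly the point the paper glosses over; your lifting-by-torsion-free-representatives argument for the interchange is a reasonable strategy, though to make it airtight you would also need to address fullness and faithfulness on morphisms (quotient morphisms are given by roofs, not just honest maps), which the shortening-functor framework of \cite{EA1} is designed to handle. The paper's explicit length-boundedness check for well-definedness is subsumed in your interchange statement, so nothing is missing in substance.
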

\begin{proof}
Let $M \in {\co}^{\mathfrak{p}_{\infty}}_{\nu, \bC^{\infty}}$. First of all, we need to \InnaD{check that the functor $\widehat{\Gamma}_{\text{lim}}$ is well-defined; that is, we need to} show that the sequence $\{ \ell_{\widehat{\co}^{\mathfrak{p}_{n}}_{\nu, \bC^{n}}}( \hat{\pi}_{n}(\Gamma_{n}(M))) \}_{n \geq 1}$ is bounded from above. 

Indeed, 
$$\ell_{\widehat{\co}^{\mathfrak{p}_{n}}_{\nu, \bC^{n}}}( \hat{\pi}_{n}(\Gamma_{n}(M))) \leq \ell_{{\co}^{\mathfrak{p}_{n}}_{\nu, \bC^{n}}}(\Gamma_{n}(M))$$
But the sequence $\{ \ell_{{\co}^{\mathfrak{p}_{n}}_{\nu, \bC^{n}}}( \Gamma_{n}(M)) \}_{n \geq 1}$ is bounded from above by Lemma \ref{prop:inv_lim_gl_infty_par_cat}, so the original sequence is bound from above as well.

\InnaD{Thus we obtain a commutative diagram
$$ \begin{CD}  Rep(\gl_{\infty})_{poly, \nu} @>>> {\co}^{\mathfrak{p}_{\infty}}_{\nu, \bC^{\infty}} @>{\hat{\pi}_{\infty}}>> \widehat{\co}^{\mathfrak{p}_{\infty}}_{\nu, \bC^{\infty}} \\
@V{\Gamma_{\text{lim}}}VV @V{\Gamma_{\text{lim}}}VV @V{\widehat{\Gamma}_{\text{lim}}}VV \\
\varprojlim_{\InnaD{n \geq 1}, \text{ restr}} Rep(\gl_{n})_{poly, \nu} @>>> \varprojlim_{\InnaD{n \geq 1}, \text{ restr}} {\co}^{\mathfrak{p}_{n}}_{\nu, \bC^{n}} @>{\hat{\pi}_{\text{lim}} = \varprojlim_n \hat{\pi}_n}>> \varprojlim_{\InnaD{n \geq 1}, \text{ restr}} \widehat{\co}^{\mathfrak{p}_{n}}_{\nu, \bC^{n}}  \end{CD}$$
where $Rep(\gl_{N})_{poly, \nu}$ is the Serre subcategory of $\widehat{\co}^{\mathfrak{p}_{N}}_{\nu, \bC^{N}}$ consisting of all polynomial modules of degree $\nu$. The rows of this commutative diagram are ``exact'' (in the sense that $\widehat{\co}^{\mathfrak{p}_{\infty}}_{\nu, \bC^{\infty}}$ is the Serre quotient of the category ${\co}^{\mathfrak{p}_{\infty}}_{\nu, \bC^{\infty}}$ by the Serre subcategory $Rep(\gl_{\infty})_{poly, \nu}$, and similarly for the bottom row). 

The functors $$\Gamma_{\text{lim}}: Rep(\gl_{\infty})_{poly, \nu} \longrightarrow \varprojlim_{\InnaD{n \geq 1}, \text{ restr}} Rep(\gl_{n})_{poly, \nu}$$ and $$\Gamma_{\text{lim}}: {\co}^{\mathfrak{p}_{\infty}}_{\nu, \bC^{\infty}} \longrightarrow \varprojlim_{\InnaD{n \geq 1}, \text{ restr}} {\co}^{\mathfrak{p}_{n}}_{\nu, \bC^{n}}$$ are equivalences of categories (by Propositions \ref{prop:inv_lim_cat_poly_rep} and \ref{prop:inv_lim_gl_infty_par_cat}), and thus the functor $\widehat{\Gamma}_{\text{lim}}$ is an equivalence as well.
}

% Next, we check that the functor $\widehat{\Gamma}_{\text{lim}}$ is an equivalence. To prove this, we only need to check that it is faithful, and then by Lemma \ref{prop:inv_lim_gl_infty_par_cat} we would be done. 
% 
% This is equivalent to requiring that given an object \InnaD{$((M_{n})_{n\geq 1}, (\phi_{n-1, n})_{n \geq 2})$} of $\varprojlim_{\InnaD{n \geq 1}, \text{ restr}} {\co}^{\mathfrak{p}_{n}}_{\nu, \bC^{n}}$ such that every $M_n$ is a polynomial $\InnaD{\gl(U_n)}$-module, and a module $M \in  {\co}^{\mathfrak{p}_{\infty}}_{\nu, \bC^{\infty}}$, we would have: 
% 
% ${\Gamma}_{\text{lim}}(M) \cong ((M_{n+1})_{n\geq 0}, (\phi_{n, n+1})_{n \geq 1})$ implies that $M$ is a polynomial $\gl_{\infty}$-module.
% 
% This is provided by the construction of the functor $\Gamma_{\text{lim}}^*$.
\end{proof}

\section{Complex tensor powers of a unital vector space}\label{sec:comp_tens_power}

In this section we describe the construction of a complex tensor power of the unital vector space $\bC^N$ with the chosen vector $\triv:=e_1$ (again, $N \in \bZ_+ \cup \{\infty \}$). A general construction of the complex tensor power of a unital vector space is given in \cite[Section 6]{EA}. 

Again, we denote $U_N:= span\{e_2, e_3, ...\}$, and $U_{N *}:= span \{e_2^*, e_3^*, ...\} \subset \bC^N_*$. \InnaD{As before, we have a decomposition:
$$\gl_N \cong \bC   \id_{\bC^N} \oplus \mathfrak{u}_{\mathfrak{p}_N}^{-} \oplus \mathfrak{u}_{\mathfrak{p}_N}^{+}  \oplus \gl(U_N)$$ such that} $U_N \cong \mathfrak{u}_{\mathfrak{p}_N}^{-}$, $U_{N *} \cong \mathfrak{u}_{\mathfrak{p}_N}^{+}$, and if $N$ is finite, we have $U_N^* \cong U_{N *}$.

Fix $\nu \in \bC$.

\InnaD{Recall from \cite[Section 4]{EA} that for any $\nu \in \bC$, in the Deligne category $\underline{Rep}(S_{\nu})$ we have the objects $\Delta_k$ ($k \in \bZ_+$). These objects interpolate the representations $\bC Inj(\{1,...,k\} , \{1,...,n\}) \cong Ind_{S_{n-k} \times S_k \times S_k}^{S_n \times S_k} \bC$ of the symmetric groups $S_n$; in fact, for any $n \in \bZ_+$ we have: $$\mathcal{S}_n(\Delta_k) \cong \bC Inj(\{1,...,k\} , \{1,...,n\})$$ where $\mathcal{S}_n: \underline{Rep}(S_{\nu=n}) \rightarrow Rep(S_n)$ is the monoidal functor discussed in Subsection \ref{ssec:S_nu_general}.}
% 
% By definition, $\Delta_k$ is the image of an idempotent $x_k \in \End_{\underline{Rep}(S_{\nu})}(\fh^{\otimes k})$ (the latter is given explicitly in \cite[Section 3.1]{CO2}), and satisfies:
% 
% \begin{lemma}\label{lem:funct_F_n_Delta_k}
%  $\mathcal{S}_n(\Delta_k) \cong \bC Inj(\{1,...,k\} , \{1,...,n\}) \cong Ind_{S_{n-k} \times S_k \times S_k}^{S_n \times S_k} \operatorname{\InnaA{\bC}}$.
\begin{definition}[Complex tensor power]\label{def:complex_ten_power_splitting}
 
 Define the object $(\bC^N)^{\underline{\otimes}  \nu}$ of $Ind-(\underline{Rep}^{ab}(S_{\nu}) \boxtimes \co^{\mathfrak{p}_N}_{\nu, \bC^N})$ \InnaA{by setting}
$$ (\bC^N)^{\underline{\otimes}  \nu} := \bigoplus_{k \geq 0} (U_N^{\otimes k} \otimes \Del_k)^{S_k}$$

The action on $\mathfrak{gl}_N$ on $(\bC^N)^{\underline{\otimes}  \nu}$ is given as follows:

\InnaA{$$ \entrymodifiers={+!!<0pt,\fontdimen22\textfont2>} \xymatrix{ &{\mathbf{1}\;} \ar@/^1pc/[r]^{U_N} &{\phantom{UUU} {U_N} \otimes \Del_1 \phantom{UU}}  \ar@/^1pc/[r]^{U_N} \ar@/^1pc/[l]^{U_{N *}} \ar@(dl,dr)_{\gl({U_N})} &{\,\phantom{UU} (U_N^{\otimes 2} \otimes \Del_2)^{S_2} \phantom{UU}} \ar@/^1pc/[r]^{U_N} \ar@/^1pc/[l]^{U_{N *}} \ar@(dl,dr)_{\gl(U_N)} &{\,\phantom{UU} (U_N^{\otimes 3} \otimes \Del_3)^{S_3} \phantom{UUU}} \ar@/^1pc/[r]^-{U_N} \ar@/^1pc/[l]^{U_{N *}} \ar@(dl,dr)_{\gl(U_N)} &{\,\phantom{U} \ldots \,} \ar@/^1pc/[l]^{U_{N *}} }$$}

\begin{itemize}
 \item $E_{1,1} \in \mathfrak{gl}_N$ acts by scalar $\nu - k$ on each summand $(U_N^{\otimes k} \otimes \Del_k)^{S_k} $.

\item $A \in \mathfrak{gl}(U_N) \subset \mathfrak{gl}_N$ acts on $(U_N^{\otimes k} \otimes \Del_k)^{S_k} $ by

\begin{align*}
\sum_{1 \leq i \leq k} A^{(i)} \rvert_{U_N^{\otimes k}} \otimes \id_{\Del_k}: (U_N^{\otimes k} \otimes \Del_k)^{S_k}  &\longrightarrow (U_N^{\otimes k} \otimes \Del_k)^{S_k}
\end{align*}
\item $u \in  U_N \cong \mathfrak{u}_{\mathfrak{p}_N}^{-}$ acts by morphisms of degree $1$, which are given explicitly in \cite[Section 6.2]{EA}.

\item $f \in  U_{N *} \cong \mathfrak{u}_{\mathfrak{p}_N}^{+}$ acts by morphisms of degree $-1$, which are given explicitly in \cite[Section 6.2]{EA}.
\end{itemize}
\end{definition}

\begin{remark}\label{rmk:uniq_determ_actions_of_u_plus_minus}
 \InnaD{The actions of the elements of $\mathfrak{u}_{\mathfrak{p}_N}^{+}$, $\mathfrak{u}_{\mathfrak{p}_N}^{-}$, though not written here explicitly, are in fact uniquely determined by the actions of $E_{1, 1}$ and $\gl(U_N)$.
 
 To see this, note that the ideal in the Lie algebra $\gl_{N}$ generated by the Lie subalgebra $\bC E_{1, 1} \oplus \gl(U_{N})$ is the \InnaE{entire} $ \gl_{N}$. Given two $\gl_{N}$-modules $M_1, M_2$ and an isomorphism $M_1 \rightarrow M_2$ which is equivariant with respect to the Lie subalgebra $\bC E_{1, 1} \oplus \gl(U_{N})$, \InnaE{the above fact implies that} this isomorphism is also \InnaE{$\gl_{N}$-}equivariant. 

 In other words, if there exists a way to define an action of $\gl_N$ whose restriction to the the Lie subalgebra $\bC E_{1, 1} \oplus \gl(U_{N})$ is given by the formulas above, then such an action of $\gl_N$ is unique.
 }
%  \Inna{unique way to def action}
\end{remark}
\begin{remark}\label{rmk:comp_tens_power_O_cat}
\InnaD{The proof that the object $(\bC^N)^{\underline{\otimes}  \nu}$ lies in the category $Ind-(\underline{Rep}(S_{\nu}) \InnaB{\boxtimes} \co^{\mathfrak{p}_N}_{\nu, \bC^N}$ is an easy check, and can be found in \cite{EA}. In particular, it means that the action of the mirabolic subalgebra $\InnaE{\operatorname{Lie} \bar{\mathfrak{P}}_{\triv}}$ on the complex tensor power $(\bC^N)^{\underline{\otimes}  \nu}$ integrates to an action of the mirabolic subgroup $\bar{\mathfrak{P}}_{\triv}$, thus making $(\bC^N)^{\underline{\otimes}  \nu}$ a Harish-Chandra module in $Ind-\underline{Rep}^{ab}(S_{\nu})$ for the pair $(\gl_N, \bar{\mathfrak{P}}_{\triv})$.}
\end{remark}

The definition of \InnaD{the} complex tensor power is compatible with the usual notion of a tensor power of a unital vector space (see \cite[Section 6]{EA}):
\begin{proposition}\label{prop:comp_tens_power_F_n}
 Let $d \in \bZ_+$. Consider the functor $$\hat{\mathcal{S}}_d: Ind-(\underline{Rep}(S_{\nu=d}) \InnaB{\boxtimes} \co^{\mathfrak{p}_N}_{d, \bC^N}) \longrightarrow Ind-(Rep(S_d)\InnaB{\boxtimes} \co^{\mathfrak{p}_N}_{d, \bC^N})$$ induced by the functor $${\mathcal{S}}_d: \underline{Rep}(S_{\nu=d}) \longrightarrow Rep(S_n)$$ described in Subsection \ref{ssec:S_nu_general}. Then $\hat{\mathcal{S}}_d ((\bC^N)^{\underline{\otimes}  d}) \cong (\bC^N)^{\otimes d}$.
\end{proposition}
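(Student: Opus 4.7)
The strategy is to compute both sides explicitly and match them using the uniqueness statement in Remark \ref{rmk:uniq_determ_actions_of_u_plus_minus}, which reduces the problem to checking equivariance for $\bC E_{1,1} \oplus \gl(U_N)$.

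First, I would apply $\hat{\mathcal{S}}_d$ summand-by-summand to Definition \ref{def:complex_ten_power_splitting}. Since $\mathcal{S}_d$ is an additive symmetric monoidal functor that acts trivially on objects living outside $\underline{Rep}(S_{\nu=d})$, and since $\mathcal{S}_d(\Del_k) \cong \bC \operatorname{Inj}(\{1,\ldots,k\}, \{1,\ldots,d\})$ by the property recalled just before the definition, we get
\[
\hat{\mathcal{S}}_d\bigl( (\bC^N)^{\underline{\otimes} d}\bigr) \;\cong\; \bigoplus_{k \geq 0} \bigl(U_N^{\otimes k} \otimes \bC \operatorname{Inj}(\{1,\ldots,k\}, \{1,\ldots,d\})\bigr)^{S_k}.
\]
Note that $\operatorname{Inj}(\{1,\ldots,k\}, \{1,\ldots,d\}) = \emptyset$ for $k > d$, so only finitely many summands survive.

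Next, I would decompose the classical tensor power using the splitting $\bC^N = \bC \triv \oplus U_N$. For each subset $I \subset \{1,\ldots,d\}$, let $W_I \subset (\bC^N)^{\otimes d}$ denote the subspace spanned by pure tensors with $\triv$ in position $j \in \{1,\ldots,d\} \setminus I$ and an arbitrary element of $U_N$ in each position $j \in I$; this gives
\[
(\bC^N)^{\otimes d} \;=\; \bigoplus_{I \subset \{1,\ldots,d\}} W_I, \qquad W_I \cong U_N^{\otimes I}.
\]
Grouping by cardinality $k = |I|$ and using that the set of subsets of size $k$ is in bijection with $S_k$-orbits on $\operatorname{Inj}(\{1,\ldots,k\}, \{1,\ldots,d\})$, I can rewrite
\[
\bigoplus_{|I| = k} W_I \;\cong\; \bigl(U_N^{\otimes k} \otimes \bC \operatorname{Inj}(\{1,\ldots,k\}, \{1,\ldots,d\})\bigr)^{S_k},
\]
yielding an $S_d$-equivariant vector-space isomorphism with $\hat{\mathcal{S}}_d((\bC^N)^{\underline{\otimes} d})$.

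It remains to check $\gl_N$-equivariance. On the summand indexed by $k$, the element $E_{1,1}$ acts on $(\bC^N)^{\otimes d}$ by the scalar $d-k = \nu - k$ (the number of $\triv$-factors), which matches Definition \ref{def:complex_ten_power_splitting}. The action of $A \in \gl(U_N)$ on $W_I \cong U_N^{\otimes I}$ is the sum $\sum_{i \in I} A^{(i)}$, which under the identification above becomes precisely $\sum_{1 \leq i \leq k} A^{(i)} \otimes \id$ on the invariants; again this matches. Thus the isomorphism is equivariant for the subalgebra $\bC E_{1,1} \oplus \gl(U_N)$. Since both sides carry genuine $\gl_N$-module structures (the left-hand side by construction, the right-hand side as a standard tensor power), Remark \ref{rmk:uniq_determ_actions_of_u_plus_minus} forces the isomorphism to be $\gl_N$-equivariant, completing the proof.

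The only step that requires any real care is the bookkeeping for the $S_k$-invariants in step two, i.e.\ writing down the bijection between $k$-subsets of $\{1,\ldots,d\}$ and $S_k$-orbits on injections in an $S_d$-equivariant way. Everything else is formal once Remark \ref{rmk:uniq_determ_actions_of_u_plus_minus} is invoked, so I would not need to compute the $\mathfrak{u}_{\mathfrak{p}_N}^{\pm}$-actions explicitly on either side.
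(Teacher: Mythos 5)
Your decomposition of $(\bC^N)^{\otimes d}$ by the positions of $\triv$-factors, the identification of the degree-$k$ piece with $(U_N^{\otimes k}\otimes\bC\operatorname{Inj}(\{1,\dots,k\},\{1,\dots,d\}))^{S_k}$, and the verification of $E_{1,1}$- and $\gl(U_N)$-equivariance are all correct; this is the natural approach and parallels the paper's proof of the adjacent Proposition \ref{prop:compatab_comp_tens_power}. (The paper itself delegates this proposition to \cite[Section 6]{EA}, so there is no in-paper argument to compare against.)

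One caveat: your final step appeals to Remark \ref{rmk:uniq_determ_actions_of_u_plus_minus} to avoid checking the $\mathfrak{u}_{\mathfrak{p}_N}^{\pm}$-actions, but the remark's claim is not quite correct as stated. An isomorphism of $\gl_N$-modules that is equivariant for the Levi $\bC E_{1,1}\oplus\gl(U_N)$ need not be $\gl_N$-equivariant: on the tautological $\gl_2$-module $\bC^2$ (which lies in $\co^{\mathfrak{p}_2}_{1,\bC^2}$), the map $\diag(1,-1)$ commutes with $E_{1,1}$ and $E_{2,2}$ but not with $E_{1,2}$. The observation that $\bC E_{1,1}\oplus\gl(U_N)$ generates $\gl_N$ as an ideal does not by itself force uniqueness of a compatible $\gl_N$-action on a fixed underlying Levi-module. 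The paper uses the same shortcut in Proposition \ref{prop:compatab_comp_tens_power}, so you are following its logic faithfully; still, a self-contained proof should check directly, against the explicit formulas of \cite[Section 6.2]{EA}, that your isomorphism intertwines $\mathfrak{u}_{\mathfrak{p}_N}^{+}$ and $\mathfrak{u}_{\mathfrak{p}_N}^{-}$ as well, rather than deducing this from Levi-compatibility alone.
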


\InnaC{The construction of the complex tensor power is also compatible with the functors \InnaD{$\mathfrak{Res}_{n, n+1}$ and} $\Gamma_n$ defined in Definitions \ref{def:Res_funct_cat_o}, \ref{def:Gamma_funct_cat_o}. These properties can be seen as special cases of the following statement (when $N=n+1$ and $N=\infty$, respectively):}

\begin{proposition}\label{prop:compatab_comp_tens_power}
\InnaD{Let $n \geq 1$, and let $N \geq n$, $N \in \bZ_{\geq 1} \cup \{\infty \}$. Recall that we have an inclusion $\gl_n \oplus \gl_n^{\perp} \subset \gl_N$, and consider the functor
$$ (\cdot )^{\gl_n^{\perp}}: Ind-\left(\underline{Rep}^{ab}(S_{\nu}) \boxtimes {\co}^{\mathfrak{p}_{N}}_{\nu, \bC^{N}}\right) \longrightarrow Ind-\left(\underline{Rep}^{ab}(S_{\nu}) \boxtimes {\co}^{\mathfrak{p}_{n}}_{\nu, \bC^{n}} \right)$$ induced by the functor $(\cdot )^{\gl_n^{\perp}}:  {\co}^{\mathfrak{p}_{N}}_{\nu, \bC^{N}} \rightarrow {\co}^{\mathfrak{p}_{n}}_{\nu, \bC^{n}}$. The functor $ (\cdot )^{\gl_n^{\perp}}$ then takes $(\bC^{N})^{\underline{\otimes} \nu}$ to $(\bC^n)^{\underline{\otimes} \nu}$.}
\end{proposition}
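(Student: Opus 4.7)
My plan is to prove the proposition by directly computing the $\gl_n^{\perp}$-invariants of the graded components of $(\bC^N)^{\underline{\otimes}\nu}$ and then identifying the induced $\gl_n$-action with the one defining $(\bC^n)^{\underline{\otimes}\nu}$.

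First I would decompose the unital structure compatibly with the pair $\gl_n \subset \gl_N$. Write $U_N = U_n \oplus U_n^{\perp}$, where $U_n = \mathrm{span}(e_2,\dots,e_n)$ and $U_n^{\perp} = \mathrm{span}(e_{n+1},e_{n+2},\dots)$. Under $\gl_n^{\perp} \cong \gl(U_n^{\perp})$, the space $U_n$ is the trivial module while $U_n^{\perp}$ is the tautological representation. Expanding $U_N^{\otimes k} = \bigoplus_{S \subset \{1,\dots,k\}} (\text{tensor factors } U_n \text{ in positions } S,\; U_n^{\perp} \text{ elsewhere})$ and using that $(U_n^{\perp})^{\otimes m}$ as a polynomial $\gl(U_n^{\perp})$-module has invariants only when $m = 0$, I would conclude $(U_N^{\otimes k})^{\gl_n^{\perp}} = U_n^{\otimes k}$.

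Next, since the $S_k$-action on $U_N^{\otimes k} \otimes \Delta_k$ commutes with $\gl_n^{\perp}$ and the factor $\Delta_k$ carries no $\gl_n^{\perp}$-action, taking invariants commutes with taking $S_k$-coinvariants and with the direct sum, so
\[
\left( (\bC^N)^{\underline{\otimes}\nu} \right)^{\gl_n^{\perp}} \;=\; \bigoplus_{k \geq 0} \bigl( U_n^{\otimes k} \otimes \Delta_k \bigr)^{S_k},
\]
which is precisely the underlying object of $(\bC^n)^{\underline{\otimes}\nu}$ as an $Ind$-object of $\underline{Rep}^{ab}(S_{\nu})$.

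Finally, I need to verify that the $\gl_n$-action on this invariant subspace coincides with the one in Definition \ref{def:complex_ten_power_splitting} for $(\bC^n)^{\underline{\otimes}\nu}$. Since $\gl_n$ commutes with $\gl_n^{\perp}$ inside $\gl_N$, the $\gl_n$-action indeed preserves the invariants. The actions of $E_{1,1}$ (scalar $\nu - k$ on the $k$-th summand) and of $\gl(U_n) \subset \gl(U_N)$ (which on $U_n^{\otimes k} \subset U_N^{\otimes k}$ is given by the same sum-of-$A^{(i)}$ formula) match by direct inspection. For the nilpotent pieces $\mathfrak{u}_{\mathfrak{p}_n}^{\pm}$, rather than recomputing explicit formulas I would invoke Remark \ref{rmk:uniq_determ_actions_of_u_plus_minus}: the $\gl_n$-module structure on a vector space is uniquely determined by the action of $\bC E_{1,1} \oplus \gl(U_n)$ once it is known to extend. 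Both the restriction of the $\gl_N$-action to $\gl_n$ on the invariants and the prescribed $\gl_n$-action on $(\bC^n)^{\underline{\otimes}\nu}$ extend the same $\bC E_{1,1} \oplus \gl(U_n)$-action, so the two $\gl_n$-actions coincide.

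The computation of $\gl_n^{\perp}$-invariants in Step 2 is the only place with any real content, and it is elementary given the polynomiality built into the definition of $\co^{\mathfrak{p}_N}_{\nu,\bC^N}$; the potential obstacle — matching the $\mathfrak{u}_{\mathfrak{p}_n}^{\pm}$-actions without unwinding the explicit formulas of \cite[Section 6.2]{EA} — is sidestepped by the uniqueness observation in Remark \ref{rmk:uniq_determ_actions_of_u_plus_minus}.
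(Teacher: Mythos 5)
Your proof is correct and follows essentially the same route as the paper: identify the underlying $Ind$-object by taking $\gl_n^{\perp}$-invariants graded piece by piece (the paper phrases this as applying the additive monoidal functor $(\cdot)^{\gl(U_n)^{\perp}}$ on polynomial $\gl(U_N)$-modules, which amounts to your direct computation $(U_N^{\otimes k})^{\gl_n^{\perp}} = U_n^{\otimes k}$), match the $E_{1,1}$- and $\gl(U_n)$-actions by inspection, and invoke Remark \ref{rmk:uniq_determ_actions_of_u_plus_minus} to avoid re-deriving the $\mathfrak{u}_{\mathfrak{p}_n}^{\pm}$ formulas. One cosmetic slip: you wrote ``$S_k$-coinvariants'' where the definition uses $S_k$-invariants, but as the $S_k$-action is semisimple this is immaterial.
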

\begin{proof}
% To prove that $\InnaC{\InnaD{\mathfrak{Res}}}_{n-1, n} \circ SW_{\nu, \bC^n} \cong  SW_{\nu, \bC^{n-1}}$, we actually need to prove that $$\InnaC{\InnaD{\mathfrak{Res}}}_{n-1, n}((\bC^n)^{\underline{\otimes}  \nu}) \cong (\bC^{n-1})^{\underline{\otimes}  \nu}$$ as \InnaA{$Ind-(\underline{Rep}^{ab}(S_{\nu}) \boxtimes \co^{\mathfrak{p}_{n-1}}_{\nu, \bC^{n-1}})$-objects.}
\InnaD{The functor $(\cdot )^{\gl_n^{\perp}}:  {\co}^{\mathfrak{p}_{N}}_{\nu, \bC^{N}} \rightarrow {\co}^{\mathfrak{p}_{n}}_{\nu, \bC^{n}}$ induces an endofunctor of $\InnaD{Ind-}\underline{Rep}^{ab}(S_{\nu})$. \InnaD{We} would like to say that \InnaD{we have an isomorphism of $\InnaD{Ind-}\underline{Rep}^{ab}(S_{\nu})$-objects} 

$((\bC^{N})^{\underline{\otimes} \nu})^{\gl_n^{\perp}} \stackrel{?}{\cong} (\bC^n)^{\underline{\otimes} \nu}$, and \InnaD{that} the action of $\gl_n \InnaD{\subset \gl_{N}}$ on \InnaD{$((\bC^{N})^{\underline{\otimes} \nu})$} corresponds to the action of $\gl_n$ on \InnaD{$(\bC^n)^{\underline{\otimes} \nu}$}.

In order to do this, we first consider $(\bC^N)^{\underline{\otimes}  \nu}$ as \InnaA{an object in $Ind-\underline{Rep}^{ab}(S_{\nu})$ with an action of $\gl(U_N)$}:
$$(\bC^N)^{\underline{\otimes}  \nu} \cong \bigoplus_{k \geq 0} (\Del_k \otimes U^{\otimes k}_N)^{S_k}$$

If we consider only the actions of $\gl(U_{N}), \gl(U_n)$, the functor $\Gamma_n$ is induced by the \InnaD{additive} monoidal functor $(\cdot)^{\gl(U_n)^{\perp}}: Ind-Rep(\gl(U_{N}))_{poly} \rightarrow Ind-Rep(\gl(U_{N}))_{poly}$. \InnaD{This shows that we have an isomorphism of $\InnaD{Ind-}\underline{Rep}^{ab}(S_{\nu})$-objects $$((\bC^{N})^{\underline{\otimes} \nu})^{\gl_n^{\perp}}  \cong \bigoplus_{k \geq 0} (\Del_k \otimes U^{\otimes k}_n)^{S_k} \cong (\bC^n)^{\underline{\otimes} \nu}$$} and the actions of $\gl(U_n)$ on \InnaD{both sides are compatible. From the definition of the complex tensor power (Definition \ref{def:complex_ten_power_splitting})} one immediately sees that the actions of $E_{1,1}$ on both sides are compatible as well. Remark \ref{rmk:uniq_determ_actions_of_u_plus_minus} now completes the proof.

}
\end{proof}

\section{Schur-Weyl duality in complex rank: the Schur-Weyl functor and the finite-dimensional case}\label{sec:SW_functor}

We fix $\nu \in \bC$, and $N \in \bZ_+ \cup \{\infty\}$. Again, we consider the unital vector space $\bC^N$ with the chosen vector $\triv := e_1$ and the complement $U_N := span\{e_2, e_3, ...\}$. 
\subsection{Schur-Weyl functor}\label{ssec:SW_functor}
\begin{definition}\label{def:SW_functor}
 Define the Schur-Weyl contravariant functor $$SW_{\nu}: \underline{Rep}^{ab}(S_{\nu}) \longrightarrow Mod_{\mathcal{U}(\gl_N)}$$ by $$SW_{\nu}:= \Hom_{\underline{Rep}^{ab}(S_{\nu})}( \, \cdot \, , (\bC^N)^{\underline{\otimes}  \nu})$$

%  We will also consider the restriction of the functor $SW_{\nu}$ to the category $\underline{Rep}^{ab}(S_{\nu})$, which will be denoted by $SW_{\nu}$.
\end{definition}

\begin{remark}
 The functor $SW_{\nu}: \underline{Rep}^{ab}(S_{\nu}) \longrightarrow Mod_{\mathcal{U}(\gl_N)}$ is a contravariant $\bC$-linear additive left-exact functor.
\end{remark}

% \begin{remark}
%  In general, consider two abelian categories $\mathcal{A}, \mathcal{B}$ for which the Deligne tensor product $\mathcal{A} \boxtimes \mathcal{B}$ is defined (see \cite[Section 5]{D1}). For any $A \in \mathcal{A}$, we have a functor $$A \boxtimes (\cdot): \mathcal{B} \longrightarrow \mathcal{A} \boxtimes \mathcal{B}$$ This functor has a right adjoint $$\underline{\Hom}_{\mathcal{A}}( A, \cdot): \mathcal{A} \boxtimes \mathcal{B} \longrightarrow Ind-\mathcal{B}$$
%  It is easy to check that for $A' \in \mathcal{A}, B' \in \mathcal{B}$, we have: $$\underline{\Hom}_{\mathcal{A}}( A, A' \boxtimes  B') \cong \Hom_{\mathcal{A}}( A, A') \otimes  B'$$
%  So $$SW_{\nu, \bC^N}(E) = \underline{\Hom}_{\Dab}(E, (\bC^N)^{\underline{\otimes} \nu})$$
% \end{remark}

It turns out that the image of the functor $SW_{\nu} :\underline{Rep}^{ab}(S_{\nu}) \rightarrow Mod_{\mathcal{U}(\gl_N)}$ lies in $\co^{\mathfrak{p}_N}_{\nu, \bC^N}$ (cf. Remark \ref{rmk:comp_tens_power_O_cat}).
% \InnaD{This follows from the fact that the action of the mirabolic subalgebra $\{ \phi \in \gl_N : \phi(\triv)=0 \}$ on the complex tensor power $(\bC^N)^{\underline{\otimes}  \nu}$ integrates to an action of the mirabolic subgroup $\bar{\mathfrak{P}}_{\triv}$, thus making $(\bC^N)^{\underline{\otimes}  \nu}$ a Harish-Chandra module in $Ind-\underline{Rep}^{ab}(S_{\nu})$ for the pair $(\gl_N, \bar{\mathfrak{P}}_{\triv})$.}

We can now define another Schur-Weyl functor which we will consider: it is the contravariant functor $\widehat{SW}_{\nu, {\bC^N}}: \underline{Rep}^{ab}(S_{\nu}) \longrightarrow \widehat{\co}^{\mathfrak{p}_N}_{\nu, {\bC^N}}$. \InnaD{Recall from Section \ref{ssec:def_par_cat_O_gl_N} that} $$\hat{\pi}_N: \co^{\mathfrak{p}_N}_{\nu, {\bC^N}} \longrightarrow \widehat{\co}^{\mathfrak{p}_N}_{\nu, {\bC^N}}:= \quotient{\co^{\mathfrak{p}_N}_{\nu, {\bC^N}}}{\InnaA{Rep(\gl_N)_{poly, \nu}}}$$ 
is the Serre quotient of $\co^{\mathfrak{p}_N}_{\nu, {\bC^N}}$ by the Serre subcategory of polynomial $\gl_N$-modules of degree $\nu$. 
We then define $$\widehat{SW}_{\nu, {\bC^N}} := \hat{\pi}_N \circ SW_{\nu, \bC^N}$$

\subsection{The finite-dimensional case}\label{ssec:finite_dim_SW}
Let $n \in \bZ_+$. We then have the following theorem, which can be found in \cite[Section 7]{EA}:

% The main goal of this section is to prove the following theorem:
\begin{theorem}\label{thrm:SW_almost_equiv}
The contravariant functor $\widehat{SW}_{\nu, {\bC^n}}:\underline{Rep}^{ab}(S_{\nu}) \rightarrow \widehat{\co}^{\mathfrak{p}_n}_{\nu, {\bC^n}}$ is exact and essentially surjective.
Moreover, the induced contravariant functor $$ \quotient{\underline{Rep}^{ab}(S_{\nu})}{Ker(\widehat{SW}_{\nu, {\bC^n}})} \rightarrow \widehat{\co}^{\mathfrak{p}_n}_{\nu, {\bC^n}}$$ is an anti-equivalence of abelian categories, thus making $\widehat{\co}^{\mathfrak{p}_n}_{\nu, {\bC^n}}$ a Serre quotient of $\underline{Rep}^{ab}(S_{\nu})^{op}$.

\end{theorem}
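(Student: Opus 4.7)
The plan is to reduce the statement to a computation on the ``standard'' building blocks of both sides and then deduce the general result from the highest weight structure of $\underline{Rep}^{ab}(S_{\nu})$ combined with the length-stratification of $\widehat{\co}^{\mathfrak{p}_n}_{\nu,\bC^n}$. The first step is to compute $SW_{\nu,\bC^n}$ on the indecomposable tilting objects $X_{\tau}$ of $\underline{Rep}(S_{\nu})\subset\underline{Rep}^{ab}(S_{\nu})$. Using the explicit decomposition $(\bC^n)^{\underline{\otimes}\nu}\cong\bigoplus_{k\geq 0}(U_n^{\otimes k}\otimes\Delta_k)^{S_k}$ together with the representing property of $\Delta_k$ (it co-represents ``the $k$-th $S_k$-invariants in $\fh^{\otimes k}$''), I expect to match $SW_{\nu,\bC^n}(X_{\tau})\cong M_{\mathfrak{p}_n}(\nu-|\tau|,\tau)$: indeed the $\gl(U_n)$-structure one gets on $\Hom(X_{\tau},(\bC^n)^{\underline{\otimes}\nu})$ is precisely $Sym(U_n)\otimes S^{\tau}U_n$ of Lemma \ref{lem:gl_u_struct_o_cat}, and the action of $\mathfrak{u}_{\mathfrak{p}_n}^{\pm}$ and $E_{1,1}$ reads off from the horizontal arrows in Definition \ref{def:complex_ten_power_splitting}. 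Combining this with the short exact sequences of Lemma \ref{lem:parab_Verma_ses} and the tilting/standard resolutions of the simples $\mathbf{L}(\lambda)$ in the highest weight category $\underline{Rep}^{ab}(S_{\nu})$, one then deduces $SW_{\nu,\bC^n}(\mathbf{L}(\lambda))\cong L(\nu-|\lambda|,\lambda)$ for $\ell(\lambda)\leq n-1$ and zero otherwise.

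Next, I would establish exactness of $\widehat{SW}_{\nu,\bC^n}$. Since $SW_{\nu,\bC^n}=\Hom(-,(\bC^n)^{\underline{\otimes}\nu})$ is automatically left exact (in the contravariant sense), it suffices to check that it sends monomorphisms to epimorphisms after applying $\hat{\pi}_n$. By d\'evissage in the highest weight category this reduces to monomorphisms between tilting objects $X_{\sigma}\hookrightarrow X_{\tau}$, i.e.\ to morphisms of parabolic Verma modules on the $\co$-side, via Step~1. The obstruction---the cokernel of the induced map between generalized Vermas---is built out of simple modules $L(\nu-|\mu|,\mu)$ with $\mu$ lying in the same $\stackrel{\nu}{\sim}$-class and being the ``preceding'' element; by Lemma \ref{lem:parab_Verma_ses} and the structure of non-trivial $\stackrel{\nu}{\sim}$-classes, these obstructions live in the finite-dimensional polynomial part $Rep(\gl_n)_{poly,\nu}$, hence vanish in $\widehat{\co}^{\mathfrak{p}_n}_{\nu,\bC^n}$.

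Essential surjectivity is then immediate: by Step~1 the simples $\hat{\pi}_n(L(\nu-|\lambda|,\lambda))$ with $\ell(\lambda)\leq n-1$ and $L(\nu-|\lambda|,\lambda)$ non-polynomial form a complete set of simple generators of $\widehat{\co}^{\mathfrak{p}_n}_{\nu,\bC^n}$, and each is hit by $\widehat{SW}_{\nu,\bC^n}(\mathbf{L}(\lambda))$. Setting $\mathcal{K}:=\Ker(\widehat{SW}_{\nu,\bC^n})$, the universal property of Serre quotients produces an exact essentially surjective contravariant functor
\[
F:\quotient{\underline{Rep}^{ab}(S_{\nu})}{\mathcal{K}}\longrightarrow\widehat{\co}^{\mathfrak{p}_n}_{\nu,\bC^n}
\]
which is a bijection on isomorphism classes of simples. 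To promote this to an anti-equivalence it is enough (by a standard argument for exact functors between finite-length abelian categories) to verify full faithfulness on simple objects, i.e.\ to compare $\operatorname{Ext}^0$ and $\operatorname{Ext}^1$ between simples in each block. Blockwise, both sides are controlled by the same combinatorial poset: on the $\underline{Rep}^{ab}(S_{\nu})$-side each non-trivial block is the $A_{\infty}$-quiver block $\{\lambda^{(i)}\}$, and on the $\widehat{\co}$-side one gets the same $A_{\infty}$-quiver block after discarding the endpoint corresponding to the (killed) polynomial simple.

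The main obstacle I anticipate is the exactness step. The complex tensor power is not injective in $\underline{Rep}^{ab}(S_{\nu})$ in any evident sense, so one has to certify by hand that every failure of right exactness of $SW_{\nu,\bC^n}$ is concentrated in the polynomial Serre subcategory. This requires delicate control of the behaviour near the ``minimal'' partition $\lambda^{(0)}$ of each non-trivial $\stackrel{\nu}{\sim}$-class---precisely the place where the finite-dimensional simple $S^{\tilde{\lambda}(\nu)}\bC^n$ appears as a composition factor of $M_{\mathfrak{p}_n}(\nu-|\lambda^{(0)}|,\lambda^{(0)})$, and which is the whole reason for introducing the localization $\hat{\pi}_n$ in the first place.
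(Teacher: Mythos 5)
The paper does not prove this theorem internally; it is cited from \cite[Section 7]{EA}, so there is no in-paper argument to compare against directly. However, the paper does record, as Lemma~\ref{lem:SW_images_simples} (quoting \cite[Theorem 7.2.3]{EA}), the effect of $\widehat{SW}_{\nu,\bC^n}$ on simple objects, and your Step~2 contradicts it. For a non-trivial $\stackrel{\nu}{\sim}$-class $\{\lambda^{(i)}\}_{i\geq 0}$ the correct statement is
$\widehat{SW}_{\nu,\bC^n}(\mathbf{L}(\lambda^{(i)}))\cong\hat\pi_n\bigl(L_{\mathfrak{p}_n}(\nu-|\lambda^{(i+1)}|,\lambda^{(i+1)})\bigr)$,
i.e.\ there is a shift of one step along the chain, whereas you assert $SW_{\nu,\bC^n}(\mathbf{L}(\lambda))\cong L(\nu-|\lambda|,\lambda)$ with no shift. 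Note that your Step~2 is also internally inconsistent with your Step~1: if $SW_{\nu,\bC^n}(X_{\lambda^{(0)}})\cong M_{\mathfrak{p}_n}(\nu-|\lambda^{(0)}|,\lambda^{(0)})$, then since $\lambda^{(0)}$ is maximal in the poset and hence $X_{\lambda^{(0)}}=\mathbf{L}(\lambda^{(0)})$, one already has $SW_{\nu,\bC^n}(\mathbf{L}(\lambda^{(0)}))\cong M_{\mathfrak{p}_n}(\nu-|\lambda^{(0)}|,\lambda^{(0)})$. By Lemma~\ref{lem:parab_Verma_ses} this module has head $L(\nu-|\lambda^{(0)}|,\lambda^{(0)})$ (the polynomial simple killed by $\hat\pi_n$) and socle $L(\nu-|\lambda^{(1)}|,\lambda^{(1)})$; thus $\widehat{SW}_{\nu,\bC^n}(\mathbf{L}(\lambda^{(0)}))\cong\hat\pi_n(L(\nu-|\lambda^{(1)}|,\lambda^{(1)}))$, not $\hat\pi_n(L(\nu-|\lambda^{(0)}|,\lambda^{(0)}))$ (which is zero). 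So even your own computation on tilting objects produces the shifted answer, and you then misstate it.

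This shift is not a cosmetic matter. It changes the description of $\Ker(\widehat{SW}_{\nu,\bC^n})$: in a non-trivial class, $\mathbf{L}(\lambda^{(i)})$ is killed exactly when $\ell(\lambda^{(i+1)})\geq n$, not when $\ell(\lambda^{(i)})\geq n$; these conditions genuinely differ because $\lambda^{(i)}\subsetneq\lambda^{(i+1)}$. The shift is also what makes the Serre quotient work: the ``bottom'' simple $L(\nu-|\lambda^{(0)}|,\lambda^{(0)})$, which is the polynomial module being inverted, is never the image of a simple under $\widehat{SW}$, and the indexing on the target side is offset by one. Consequently your essential surjectivity argument (``each $\hat\pi_n(L(\nu-|\lambda|,\lambda))$ is hit by $\widehat{SW}(\mathbf{L}(\lambda))$'') and your full-faithfulness-on-simples step are both built on the wrong matching of simple objects. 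The overall strategy---compute on tilting objects, propagate to simples via Lemma~\ref{lem:parab_Verma_ses} and the highest-weight structure, check exactness mod the polynomial Serre subcategory, then verify the anti-equivalence blockwise---is sound and matches the structure of \cite[Section 7]{EA}, but Step~2 must be corrected and the downstream steps redone with the shifted formula before the proof can be considered complete.
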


We will show that a similar result holds \InnaD{in the infinite-dimensional case}, \InnaE{when} the contravariant functor $\widehat{SW}_{\nu, \bC^{\infty}}$ is \InnaE{in fact} an anti-equivalence of categories.

\InnaD{In the} proof of Theorem \ref{thrm:SW_almost_equiv} \InnaD{we established} the following fact (see \cite[Theorem 7.2.3]{EA}):

\begin{lemma}\label{lem:SW_images_simples}
The functor $\widehat{SW}_{\nu, \bC^n}$ takes a simple object to either a simple object, or zero. More specifically, we have:
 \begin{itemize}
  \item Let $\lambda$ be a Young diagram lying in a trivial $\stackrel{\nu}{\sim}$-class. Then 
  $$\widehat{SW}_{\nu, \bC^n}(\mathbf{L}(\lambda)) \cong \hat{\pi}(L_{\mathfrak{p}_n}(\nu-\abs{\lambda}, \lambda))$$
  \item Consider a non-trivial $\stackrel{\nu}{\sim}$-class $\{\lambda^{(i)} \}_{i \geq 0}$. Then $$\widehat{SW}_{\nu, \bC^n}(\mathbf{L}(\lambda^{(i)})) \cong \hat{\pi}(L_{\mathfrak{p}_n}(\nu-\abs{\lambda^{(i+1)}}, \lambda^{(i+1)}))$$ whenever $i\geq 0$.
 \end{itemize}
\end{lemma}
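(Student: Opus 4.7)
The plan is to first compute the Schur--Weyl image of the standard objects of the highest weight category $\underline{Rep}^{ab}(S_{\nu})$, and then use the short exact sequences coming from the block structure together with the exactness of $\widehat{SW}_{\nu, \bC^n}$ to pin down the images of the simples.

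In more detail, I would first show that the contravariant functor $SW_{\nu, \bC^n}$ carries the standard object $\Delta(\lambda)$ of $\underline{Rep}^{ab}(S_{\nu})$ to the parabolic Verma module $M_{\mathfrak{p}_n}(\nu-\abs{\lambda}, \lambda)$ whenever $\ell(\lambda) \leq n-1$, and to zero otherwise. This reduces to unpacking the definition $SW_{\nu, \bC^n}(\Delta(\lambda)) = \Hom_{\underline{Rep}^{ab}(S_{\nu})}(\Delta(\lambda), (\bC^n)^{\underline{\otimes}\nu})$, using the explicit decomposition $(\bC^n)^{\underline{\otimes}\nu} = \bigoplus_{k \geq 0} (U_n^{\otimes k} \otimes \Delta_k)^{S_k}$, and extracting the $\gl_n$-module structure from the formulas of Definition \ref{def:complex_ten_power_splitting}. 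The point is that, as a $\gl(U_n)$-module via Lemma \ref{lem:gl_u_struct_o_cat}, the resulting Hom space matches $Sym(U_n) \otimes S^{\lambda} U_n$, and by the uniqueness observation of Remark \ref{rmk:uniq_determ_actions_of_u_plus_minus} the full $\gl_n$-module structure is forced to agree with $M_{\mathfrak{p}_n}(\nu - \abs{\lambda}, \lambda)$. This is essentially the computation already carried out in \cite{EA}.

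With this in hand, for a trivial $\stackrel{\nu}{\sim}$-class we have $\mathbf{L}(\lambda) = \Delta(\lambda)$ and $L_{\mathfrak{p}_n}(\nu-\abs{\lambda}, \lambda) = M_{\mathfrak{p}_n}(\nu-\abs{\lambda}, \lambda)$ because both blocks are semisimple, so the first bullet follows immediately. For a non-trivial class $\{\lambda^{(i)}\}_{i \geq 0}$, the abelian envelope description of \cite{CO2} provides the short exact sequence
$$0 \to \mathbf{L}(\lambda^{(i+1)}) \to \Delta(\lambda^{(i)}) \to \mathbf{L}(\lambda^{(i)}) \to 0$$
in $\underline{Rep}^{ab}(S_{\nu})$. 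Applying the exact contravariant functor $\widehat{SW}_{\nu, \bC^n}$ produces a short exact sequence in $\widehat{\co}^{\mathfrak{p}_n}_{\nu, \bC^n}$ with middle term $\hat{\pi}_n(M_{\mathfrak{p}_n}(\nu-\abs{\lambda^{(i)}}, \lambda^{(i)}))$. Comparing this term by term with the image under $\hat{\pi}_n$ of the sequence in Lemma \ref{lem:parab_Verma_ses} and proceeding by induction on $i$ (the base case being that $L_{\mathfrak{p}_n}(\nu - \abs{\lambda^{(0)}}, \lambda^{(0)})$ is polynomial, hence killed by $\hat{\pi}_n$, which anchors the shift by one) yields
$$\widehat{SW}_{\nu, \bC^n}(\mathbf{L}(\lambda^{(i)})) \cong \hat{\pi}_n(L_{\mathfrak{p}_n}(\nu-\abs{\lambda^{(i+1)}}, \lambda^{(i+1)})).$$

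The main obstacle is the first step, identifying $SW_{\nu, \bC^n}(\Delta(\lambda))$ with $M_{\mathfrak{p}_n}(\nu-\abs{\lambda}, \lambda)$: it requires a careful combinatorial analysis of the $\Delta_k$ and of how the operators of $\mathfrak{u}_{\mathfrak{p}_n}^{\pm}$ act on $(\bC^n)^{\underline{\otimes}\nu}$. Once this is secured, the rest of the argument is a formal consequence of the contravariant exactness of $\widehat{SW}_{\nu, \bC^n}$, the two-step structure of $\Delta(\lambda^{(i)})$, and Corollary \ref{cor:simple_Verma_basis_Groth_group}, which guarantees that these data determine the images of all simples uniquely inside the Grothendieck group of each block.
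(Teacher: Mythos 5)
Your overall strategy — first compute $\widehat{SW}_{\nu,\bC^n}$ on a family of objects with known composition series, then peel off the simples using exactness and the block structure — is the right one, and the formal part of the argument (using the two-step filtrations and Corollary~\ref{cor:simple_Verma_basis_Groth_group}) would go through if the input were correct. The problem is in the key first step: the identification
$SW_{\nu,\bC^n}(\Delta(\lambda)) \cong M_{\mathfrak{p}_n}(\nu-\abs{\lambda},\lambda)$
cannot hold as stated; it is inconsistent with the very lemma you are proving, already at the level of the Grothendieck group.

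To see the inconsistency, take a non-trivial $\stackrel{\nu}{\sim}$-class $\{\lambda^{(i)}\}_{i\geq 0}$. The standard $\Delta(\lambda^{(i)})$ in $\Dab$ has exactly the two composition factors $\mathbf{L}(\lambda^{(i)})$ and $\mathbf{L}(\lambda^{(i+1)})$. If the lemma holds, exactness of $\widehat{SW}_{\nu,\bC^n}$ forces
$[\widehat{SW}_{\nu,\bC^n}(\Delta(\lambda^{(i)}))]
= [\hat{\pi}_n(L_{\mathfrak{p}_n}(\lambda^{(i+1)}))] + [\hat{\pi}_n(L_{\mathfrak{p}_n}(\lambda^{(i+2)}))]$.
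On the other hand Lemma~\ref{lem:parab_Verma_ses} gives
$[\hat{\pi}_n(M_{\mathfrak{p}_n}(\lambda^{(i)}))]
= [\hat{\pi}_n(L_{\mathfrak{p}_n}(\lambda^{(i)}))] + [\hat{\pi}_n(L_{\mathfrak{p}_n}(\lambda^{(i+1)}))]$.
For $i\geq 1$ (and $n$ large enough that the relevant simples are nonzero) these two classes differ, so $\widehat{SW}_{\nu,\bC^n}(\Delta(\lambda^{(i)}))$ cannot be $\hat{\pi}_n(M_{\mathfrak{p}_n}(\lambda^{(i)}))$. The labeling on the $\co$-side must be shifted by one, i.e.\ the relevant parabolic Verma is the one attached to $\lambda^{(i+1)}$ (up to replacing it by its twisted dual, depending on which of $\Delta$ or $\nabla$ on the Deligne side you plug in). Your base case also does not anchor the shift the way you describe: if your first step held, the $i=0$ short exact sequence would read
$0 \to \widehat{SW}(\mathbf{L}(\lambda^{(0)})) \to \hat{\pi}_n(L_{\mathfrak{p}_n}(\lambda^{(1)})) \to \widehat{SW}(\mathbf{L}(\lambda^{(1)})) \to 0$
with a simple middle term, forcing one of the two outer terms to vanish — but the lemma asserts both are nonzero (for $n$ large), so the base case actually witnesses the contradiction rather than resolving it. The fix is to redo the Hom-space computation with the correct target: what gets identified with a (dual) parabolic Verma under $SW_{\nu,\bC^n}$ is not $\Delta(\lambda^{(i)})$ but an object one step further along the chain (equivalently, the identification can be carried out on the indecomposable objects $X_\lambda$ of $\underline{Rep}(S_\nu)$, which is how the computation is actually organized in \cite{EA}). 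Once the correct shifted identification is in hand, the rest of your dévissage goes through.
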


\begin{remark}
 Recall that $L_{\mathfrak{p}_n}(\nu-\abs{\lambda}, \lambda)$ is zero if $\ell(\lambda) \geq n$.
\end{remark}

\section{Classical Schur-Weyl duality and the restricted inverse limit}\label{sec:class_SW_inv_limit}

\subsection{\InnaD{A short overview of the} classical Schur-Weyl duality}\label{ssec:class_Schur_Weyl}
% \subsection{Statement of the classical Schur-Weyl duality}
%In this section we give a short overview of the classical Schur-Weyl duality.

Let $V$ be a vector space over $\bC$, and let $\InnaD{d \in \bZ_+}$. The \InnaD{symmetric group} $S_d$ acts on $V^{\otimes d}$ by permuting the factors of the tensor product (the action is semisimple, by Mashke's theorem):\InnaA{
$$\sigma.(v_1 \otimes v_2 \otimes...\otimes v_d) := v_{\sigma^{-1}(1)} \otimes v_{\sigma^{-1}(2)} \otimes...\otimes v_{\sigma^{-1}(d)}$$}

The actions of $\gl(V)$, $S_d$ on $V^{\otimes d}$ commute, which allows us to consider a contravariant functor $$\mathtt{SW}_{d, V}: Rep(S_d) \rightarrow Rep(\gl(V))_{poly}, \, \mathtt{SW}_{d, V}:=\Hom_{S_d}(\cdot,  V^{\otimes d})$$

The contravariant functor $\mathtt{SW}_{d, V}$ is $\bC$-linear and additive, and sends a simple \InnaD{representation} $\lambda$ of $S_d$ to \InnaD{the $\gl(V)$-module} $S^{\lambda}V$.

Next, consider the contravariant functor $$\mathtt{SW}_V: \bigoplus_{d \in \bZ_+} Rep(S_d) \rightarrow Rep(\gl(V))_{poly}, \, \mathtt{SW}_V:= \oplus_d \mathtt{SW}_{d, V}$$
%(the category $\bigoplus_{d \in \bZ_+} Rep(S_d)$ is equivalent to the category of Schur functors, and is obviously semisimple).
This functor $\mathtt{SW}_V$ is clearly essentially surjective and full (this is easy to see, since $Rep(\gl(V))_{poly}$ is a semisimple category with simple objects $\InnaA{S^{\lambda} V \cong} \mathtt{SW}(\lambda)$).

The kernel of the functor $\mathtt{SW}_V$ is the full additive subcategory (direct factor) of $\bigoplus_{d \in \bZ_+} Rep(S_d)$ generated by simple objects $\lambda$ such that $\ell(\lambda) > \dim V $.

%; taking the quotient, we see that $\mathtt{SW}_V$ defines an equivalence of categories $$\mathtt{SW}_V: \left(\bigoplus_{d \in \bZ_+} Rep(S_d)\right)_{\text{length } \leq \dim V } \rightarrow Rep(\gl(V))_{poly}$$
%where $\left(\bigoplus_{d \in \bZ_+} Rep(S_d)\right)_{\text{length } \leq \dim V }$ is the full additive subcategory (direct factor) of $\bigoplus_{d \in \bZ_+} Rep(S_d)$ generated by simple objects $\lambda$ such that $\ell(\lambda) \leq \dim V $.
\subsection{Classical Schur-Weyl duality: inverse limit}\label{ssec:class_Schur_Weyl_limit}
%\Inna{ add $\gl_{\infty}$}
In this subsection, we prove that the classical Schur-Weyl functors $ \mathtt{SW}_{\bC^n}$ make the category $\bigoplus_{d \in \bZ_+} Rep(S_d)$ dual (anti-equivalent) to the category $$Rep(\gl_{\infty})_{poly} \cong \varprojlim_{n \in \bZ_+, \text{ restr}}  Rep(\gl_{n})_{poly} $$
% The definition of a stable (or filtered) inverse limit of categories and the required properties, can be found in \cite{EA1}.

% Recall that the classical Schur-Weyl duality gives us a full, essentially surjective additive contravariant functor
% $$\mathtt{SW}_{\bC^N}: \bigoplus_{d \in \bZ_+} Rep(S_d) \rightarrow Rep(\gl_n)_{poly}, \; \mathtt{SW}_{\bC^N}:= \oplus_d \mathtt{SW}_{d, \bC^N}$$
% for every $N \in \bZ_+ \cup \{ \infty \}$ (see Subsection \ref{ssec:class_Schur_Weyl}). 

% Recall also that the \InnaA{abelian} category $\bigoplus_{d \in \bZ_+} Rep(S_d)$ is semisimple, with isomorphism classes of simple objects parametrized by all the Young diagrams of arbitrary size. 

% The \InnaA{abelian} category $Rep(\gl_N)_{poly}$ is also semisimple, with $\{S^{\lambda} \bC^N \}_{\lambda: \ell(\lambda) \leq N}$ being a set of representatives of isomorphism classes of simple \InnaA{finite-dimensional} $\gl_N$-modules.

The contravariant functor $\mathtt{SW}_{\bC^N}$ sends the Young diagram $\lambda$ to the $\gl_N$-module $S^{\lambda} \bC^N$.

% \begin{definition}
% Let $n \geq 1$, and define the functor $$\InnaD{\mathfrak{Res}}_{n-1, n}: Rep(\gl_n)_{poly} \rightarrow Rep(\gl_{n-1})_{poly}$$ as $\InnaD{\mathfrak{Res}}_{n-1, n} := ( \cdot)^{\gl_{n-1}^{\perp}}$.
% \end{definition} 
% 
% Note that $\InnaD{\mathfrak{Res}}_{n-1, n}(S^{\lambda} \bC^n) \cong S^{\lambda} \bC^{n-1}$ for any $\lambda$.

Let $n \in \bZ_+$.
We start by noticing that the functors $\InnaD{\mathfrak{Res}}_{\InnaE{n, n+1}}$ and the functors $\Gamma_n$ (\InnaD{defined in} Section \ref{sec:poly_rep}) are compatible with the classical Schur-Weyl functors $\mathtt{SW}_{\bC^n}$:
\begin{lemma}
 We have natural isomorphisms $$ \InnaD{\mathfrak{Res}}_{n, n+1} \circ \mathtt{SW}_{\bC^{n+1}} \cong \mathtt{SW}_{\bC^{n}}$$
 and $$ \Gamma_n \circ \mathtt{SW}_{\bC^{\infty}} \cong \mathtt{SW}_{\bC^{n}}$$
 for any $n \geq 0$.
\end{lemma}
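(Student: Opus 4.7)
The plan is to derive both isomorphisms in a uniform way from the observation that the functor $(\cdot)^{\gl_n^{\perp}}$ commutes with $\Hom_{S_d}(\lambda, -)$ when applied to tensor powers of the defining representation. Concretely, both $\mathfrak{Res}_{n, n+1}$ (by Corollary \ref{cor:res_func_tensor}) and $\Gamma_n$ (by Lemma \ref{lem:Gamma_is_tensor}) are symmetric monoidal functors between semisimple categories of polynomial representations, and each sends $\bC^{N}$ to $\bC^{n}$. Applying either to $(\bC^N)^{\otimes d}$ therefore yields a $\gl_n$-equivariant isomorphism $\bigl((\bC^{N})^{\otimes d}\bigr)^{\gl_n^{\perp}} \cong (\bC^n)^{\otimes d}$; this isomorphism is automatically $S_d$-equivariant, since the $S_d$-action on $(\bC^N)^{\otimes d}$ by permutation of tensor factors commutes with the entire $\gl_N$-action (and any symmetric monoidal functor intertwines the tautological $S_d$-actions on $d$-fold tensor powers).

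The second step is to verify that taking $\gl_n^{\perp}$-invariants commutes with $\Hom_{S_d}(\lambda, -)$. Because $S_d$ and $\gl_n^{\perp}$ act on commuting sides of $(\bC^N)^{\otimes d}$, the induced $\gl_n^{\perp}$-action on $\Hom_{S_d}(\lambda, (\bC^N)^{\otimes d})$ is given by postcomposition, and an $S_d$-equivariant map $\phi : \lambda \to (\bC^N)^{\otimes d}$ is annihilated by every $x \in \gl_n^{\perp}$ precisely when its image lies in $\bigl((\bC^N)^{\otimes d}\bigr)^{\gl_n^{\perp}}$. This yields a canonical natural isomorphism
\[
\bigl(\Hom_{S_d}(\lambda, (\bC^{N})^{\otimes d})\bigr)^{\gl_n^{\perp}} \;\cong\; \Hom_{S_d}\bigl(\lambda, ((\bC^{N})^{\otimes d})^{\gl_n^{\perp}}\bigr).
\]
Combining with the first step, specializing $N = n+1$ and using the definition of $\mathfrak{Res}_{n, n+1}$ produces the first claimed isomorphism, while specializing $N = \infty$ and using the definition of $\Gamma_n$ produces the second.

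There is no serious obstacle to expect: the content of the argument is formal once the invariants are identified. Naturality in $\lambda$ is automatic, since the identification above is manifestly functorial in the first argument of $\Hom_{S_d}$. The only point I would double-check is that the resulting $\gl_n$-action on the invariants matches the standard $\gl_n$-action on $S^{\lambda}\bC^n = \mathtt{SW}_{\bC^n}(\lambda)$; this follows from the $\gl_n$-equivariance of the embedding $\bC^n \hookrightarrow \bC^{N}$ underlying both $\mathfrak{Res}_{n, n+1}$ and $\Gamma_n$, equivalently from the fact that these monoidal functors are the identity on $\gl_n$-weight components of weight supported in $\{e_1, \ldots, e_n\}$.
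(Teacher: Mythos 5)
Your proof is correct, but it takes a genuinely different and more self-contained route than the paper's. The paper's proof is a one-liner by reduction to simple objects: since both sides are additive (contravariant) functors out of the semisimple category $\bigoplus_d Rep(S_d)$ into a semisimple target, it suffices to compare them on the simples $\lambda$, where the identifications $\mathtt{SW}_{\bC^N}(\lambda) \cong S^{\lambda}\bC^N$, together with Lemma \ref{lem:res_func_simples} (giving $\mathfrak{Res}_{n, n+1}(S^{\lambda}\bC^{n+1}) \cong S^{\lambda}\bC^n$) and Lemma \ref{lem:Gamma_is_tensor} (giving $\Gamma_n(S^{\lambda}\bC^{\infty}) \cong S^{\lambda}\bC^n$), finish the job. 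You instead unwind the definition $\mathtt{SW}_{\bC^N}(\lambda) = \Hom_{S_d}(\lambda, (\bC^N)^{\otimes d})$ and build the natural isomorphism directly, via (i) $\bigl((\bC^N)^{\otimes d}\bigr)^{\gl_n^{\perp}} \cong (\bC^n)^{\otimes d}$ and (ii) $\gl_n^{\perp}$-invariants commute with $\Hom_{S_d}(\lambda, -)$ because the two actions commute. Both arguments are sound. The paper's is shorter but leans on the previously established effect of $\mathfrak{Res}$ and $\Gamma_n$ on simples and on a silent appeal to naturality-in-the-semisimple-setting; yours makes the natural transformation explicit at the level of the underlying objects $(\bC^N)^{\otimes d}$, which is arguably more robust if one later needs to track the isomorphism on morphisms or compare it with the complex-rank analogue in Proposition \ref{prop:compatab_comp_tens_power}. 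One small economy worth noting: in step (i), invoking the monoidality of $\Gamma_n$ and $\mathfrak{Res}_{n, n+1}$ is slightly more machinery than needed; writing $\bC^N = \bC^n \oplus W$ and noting $(W^{\otimes k})^{\gl_n^{\perp}} = 0$ for $k \geq 1$ already forces $\bigl((\bC^N)^{\otimes d}\bigr)^{\gl_n^{\perp}} = (\bC^n)^{\otimes d}$ with its evident $\gl_n \times S_d$-structure, and makes your concluding remark about matching $\gl_n$-actions automatic.
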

\begin{proof}
 It is enough to check this on simple objects in $\bigoplus_{d \in \bZ_+} Rep(S_d)$, \InnaA{in which case the statement} follows directly from the definitions of $\InnaD{\mathfrak{Res}}_{n, n+1}$, $ \Gamma_n$ together with the fact that $\mathtt{SW}_{\bC^N}(\lambda) \cong S^{\lambda} \bC^N$ \InnaD{for any} $N \in \bZ_+ \cup \{\infty \}$.
\end{proof}

\InnaD{The above Lemma implies that we have a commutative diagram 
 $$\xymatrix{&{} &{} &Rep(\gl_{n})_{poly} \\ &\bigoplus_{d \in \bZ_+} Rep(S_d)^{op} \ar[rr]_{\mathtt{SW}_{\InnaD{\text{lim}}}}  \ar[rru]^{\mathtt{SW}_{\bC^n}} \ar[rrd]_{\mathtt{SW}_{\bC^{\infty}}} &{} &\varprojlim_{n \geq 1, \text{ restr}} Rep(\gl_{n})_{poly} \ar[u]_{\mathbf{Pr}_n} \\ &{} &{} &Rep(\gl_{\infty})_{poly} \ar[u]_{\Gamma_{\InnaD{\text{lim}}}} \ar@/_6pc/[uu]_{\Gamma_n} }$$
the functor $\Gamma_{\text{lim}}$ being an equivalence of categories (by Proposition \ref{prop:inv_lim_cat_poly_rep}), \InnaE{and $\mathbf{Pr}_n$ being the canonical projection functor.}
}
\begin{proposition}
 The contravariant \InnaD{functors} $$ \mathtt{SW}_{\infty}: \bigoplus_{d \in \bZ_+} Rep(S_d) \longrightarrow  Rep(\gl_{\infty})_{poly}$$ \InnaD{and $$ \mathtt{SW}_{\text{lim}}: \bigoplus_{d \in \bZ_+} Rep(S_d) \rightarrow \varprojlim_{n \in \bZ_+, \text{ restr}} Rep(\gl_n)_{poly}$$ are} anti-equivalences of \InnaD{semisimple} categories. 
\end{proposition}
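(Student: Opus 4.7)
The plan is to reduce everything to a verification on simple objects, exploiting the fact that both source and target are semisimple.

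First, both sides are semisimple abelian (or Karoubian, in the case of $\bigoplus_d Rep(S_d)$) categories. Simple objects of $\bigoplus_{d \in \bZ_+} Rep(S_d)$ are parametrized up to isomorphism by Young diagrams of arbitrary size (a partition $\lambda \vdash d$ gives the Specht module in $Rep(S_d)$), and simple objects of $Rep(\gl_\infty)_{poly}$ are also parametrized by all Young diagrams via $\lambda \mapsto S^\lambda \bC^\infty$ (no length restriction when $N = \infty$, cf.\ Subsection \ref{ssec:rep_gl_infty} and Remark following Definition \ref{def:grading_poly_rep}). Using the standard identity $\Hom_{S_d}(\lambda, V^{\otimes d}) \cong S^\lambda V$, the functor $\mathtt{SW}_{\bC^\infty}$ sends the simple object labeled by $\lambda$ precisely to $S^\lambda \bC^\infty$. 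Hence $\mathtt{SW}_{\bC^\infty}$ induces a bijection on isomorphism classes of simples, which immediately gives essential surjectivity.

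Next I would check full faithfulness on simples. On the source, $\End(\lambda) = \bC$ by Schur's lemma for $S_d$, and $\Hom(\lambda, \mu) = 0$ for non-isomorphic simples $\lambda, \mu$ (even when they live in different $Rep(S_d)$'s, by definition of the direct sum). On the target, $\End(S^\lambda \bC^\infty) = \bC$ and $\Hom(S^\lambda \bC^\infty, S^\mu \bC^\infty) = 0$ for $\lambda \neq \mu$ by semisimplicity of $Rep(\gl_\infty)_{poly}$. The induced map $\bC = \End_{S_d}(\lambda) \to \End_{\gl_\infty}(S^\lambda \bC^\infty) = \bC$ is automatically an isomorphism because it is a non-zero $\bC$-algebra map (it sends $\mathrm{id}$ to $\mathrm{id}$). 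Combining this with additivity and semisimplicity, $\mathtt{SW}_{\bC^\infty}$ is fully faithful on all objects; together with essential surjectivity, it is an anti-equivalence.

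Finally, for $\mathtt{SW}_{\text{lim}}$, I would use the commutative diagram just preceding the statement, which shows $\mathtt{SW}_{\text{lim}} \cong \Gamma_{\text{lim}} \circ \mathtt{SW}_{\bC^\infty}$. Since $\Gamma_{\text{lim}}$ is an equivalence by Proposition \ref{prop:inv_lim_cat_poly_rep} and $\mathtt{SW}_{\bC^\infty}$ is an anti-equivalence by the previous paragraph, the composition $\mathtt{SW}_{\text{lim}}$ is an anti-equivalence as well.

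There is no real obstacle here: the only small point to be careful about is that the identification $\Hom_{S_d}(\lambda, V^{\otimes d}) \cong S^\lambda V$ works for $V = \bC^\infty$ as well (using that any tensor $v \in V^{\otimes d}$ lives in $W^{\otimes d}$ for some finite-dimensional $W \subset V$, so the classical finite-dimensional statement applies and passes to the colimit), and that $\mathtt{SW}_{\bC^\infty}$ actually lands in the polynomial subcategory, which is immediate since $(\bC^\infty)^{\otimes d}$ itself is polynomial.
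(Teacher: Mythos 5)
Your proposal is correct and follows essentially the same route as the paper: verify that $\mathtt{SW}_{\bC^\infty}$ sends each simple object $\lambda$ to the nonzero simple $S^\lambda \bC^\infty$, invoke semisimplicity on both sides to obtain an anti-equivalence, and then transfer to $\mathtt{SW}_{\text{lim}}$ via the commutative diagram with $\Gamma_{\text{lim}}$. The only cosmetic difference is that the paper cites Subsection~\ref{ssec:class_Schur_Weyl} for fullness and essential surjectivity of $\mathtt{SW}_N$ and only adds the faithfulness check, whereas you re-derive all three properties directly from the behavior on simples.
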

\begin{proof}
As it was said in Subsection \ref{ssec:class_Schur_Weyl}, the functor $ \mathtt{SW}_{N}$ is full and essentially surjective for any $N$. In this case, the functor $ \mathtt{SW}_{\infty}$ is also faithful, since the simple object $\lam$ in $\bigoplus_{d \in \bZ_+} Rep(S_d)$ is taken by the functor $ \mathtt{SW}_{\infty}$ to the simple object $S^{\lambda} \bC^{\infty} \neq 0$. \InnaD{This proves that the contravariant functor $\mathtt{SW}_{\infty}$ is an anti-equivalence of categories. The commutative diagram above then implies that the contravariant functor $ \mathtt{SW}_{\text{lim}}$ is an anti-equivalence as well.}
\end{proof}

% \begin{remark}
%  For the category $\bigoplus_{d \in \bZ_+} Rep(S_d)$, define its $k$-th filtration component to be the full additive subcategory generated by the simple objects $\lambda$, $\ell(\lambda) \leq k$. This makes $\bigoplus_{d \in \bZ_+} Rep(S_d)$ a $\bZ_+$-filtered category, and the contravariant functor $\mathtt{SW}_{\bC^n}$ is a $\bZ_+$-filtered, shortening contravariant functor.
%  
%  This gives us a $\bZ_+$-filtered additive contravariant functor  which is an anti-equivalence of categories.
% \end{remark}

\section{\texorpdfstring{$\underline{Rep}^{ab}(S_{\nu})$}{Deligne category} and the inverse limit of \texorpdfstring{categories $\widehat{\co}^{\mathfrak{p}_N}_{\nu, \bC^N}$}{parabolic categories O}}\label{sec:SW_duality_limit}
\subsection{}
In this section we are going to prove that the Schur-Weyl functors defined in Section \ref{ssec:SW_functor} give us an equivalence of categories between $\Dab$ and \InnaD{the} \InnaA{restr} inverse limit $\varprojlim_{N \in \bZ_+, \text{ restr}} \widehat{\co}^{\mathfrak{p}_N}_{\nu, \bC^N}$.
% (here we have the unital vector spaces $V = \bC^N \cong \triv \oplus \bC^{N-1}$).

\InnaA{We fix $\nu \in \bC$.}

% \subsection{Restriction functors and Schur-Weyl functors in complex rank}\label{ssec:restr_SW_functors_compat}

\begin{proposition}\label{prop:Res_nu_n_compatible}
The functor $\InnaC{\InnaD{\mathfrak{Res}}}_{n-1, n}$ satisfies: $\InnaC{\InnaD{\mathfrak{Res}}}_{n-1, n} \circ SW_{\nu, \bC^n} \cong  SW_{\nu, \bC^{n-1}}$, i.e. there exists a natural isomorphism $\eta_n: \InnaC{\InnaD{\mathfrak{Res}}}_{n-1, n} \circ SW_{\nu, \bC^n} \rightarrow  SW_{\nu, \bC^{n-1}}$. 
\end{proposition}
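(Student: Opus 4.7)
The plan is to reduce this compatibility to the analogous compatibility of complex tensor powers (Proposition \ref{prop:compatab_comp_tens_power}), using the fact that $\Hom$ commutes with taking invariants of a group/algebra acting only on the target.

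First I would unpack the two sides. For $X \in \underline{Rep}^{ab}(S_\nu)$, we have
\[
SW_{\nu, \bC^n}(X) = \Hom_{\underline{Rep}^{ab}(S_\nu)}(X, (\bC^n)^{\underline{\otimes}\nu}),
\]
which carries a $\gl_n$-action arising from the $\gl_n$-action on $(\bC^n)^{\underline{\otimes}\nu}$. Since $\gl_{n-1}^{\perp}$ commutes with $\gl_{n-1}$ inside $\gl_n$ and acts on the Hom-space purely through the second argument, there is a canonical $\gl_{n-1}$-equivariant identification
\[
\Hom_{\underline{Rep}^{ab}(S_\nu)}\bigl(X,\, (\bC^n)^{\underline{\otimes}\nu}\bigr)^{\gl_{n-1}^{\perp}} \;\cong\; \Hom_{\underline{Rep}^{ab}(S_\nu)}\Bigl(X,\, \bigl((\bC^n)^{\underline{\otimes}\nu}\bigr)^{\gl_{n-1}^{\perp}}\Bigr).
\]
To justify moving the invariants inside the $\Hom$, I would use that $X$ is a compact object of $\underline{Rep}^{ab}(S_\nu)$ (so $\Hom(X, -)$ commutes with filtered colimits and in particular with the decomposition of $(\bC^n)^{\underline{\otimes}\nu}$ into finite-dimensional $\gl(U_n)$-isotypic components), together with the fact that $\gl_{n-1}^{\perp} \subset \gl_n$ is one-dimensional (generated by $E_{nn}$), so $\gl_{n-1}^{\perp}$-invariants can be computed on each isotypic component as a kernel, and $\Hom(X,-)$ is left exact.

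Next I would invoke Proposition \ref{prop:compatab_comp_tens_power} in the case $N=n$, which identifies
\[
\bigl((\bC^n)^{\underline{\otimes}\nu}\bigr)^{\gl_{n-1}^{\perp}} \;\cong\; (\bC^{n-1})^{\underline{\otimes}\nu}
\]
as $Ind$-objects of $\underline{Rep}^{ab}(S_\nu) \boxtimes \co^{\mathfrak{p}_{n-1}}_{\nu,\bC^{n-1}}$, compatibly with the $\gl_{n-1}$-actions. Combining the two displays yields a $\gl_{n-1}$-equivariant natural isomorphism
\[
\eta_{n,X} : \mathfrak{Res}_{n-1,n}(SW_{\nu,\bC^n}(X)) \stackrel{\sim}{\longrightarrow} SW_{\nu, \bC^{n-1}}(X),
\]
and naturality in $X$ is automatic since every step is functorial in $X$.

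The only real thing to check carefully is that the left-hand side, $\mathfrak{Res}_{n-1,n}(SW_{\nu,\bC^n}(X))$, coincides with the $\gl_{n-1}^{\perp}$-invariants of $SW_{\nu,\bC^n}(X)$ as a $\gl_{n-1}$-module, which is the content of Lemma \ref{lem:two_equiv_def_Res} together with the observation that $SW_{\nu,\bC^n}(X)$ already lies in $\co^{\mathfrak{p}_n}_{\nu,\bC^n}$ and is therefore of degree $\nu$. I expect the main (mild) obstacle to be a careful verification that the $\gl_{n-1}$-action transported across the identification $\Hom(X, M)^{\gl_{n-1}^{\perp}} \cong \Hom(X, M^{\gl_{n-1}^{\perp}})$ agrees with the $\gl_{n-1}$-action on $SW_{\nu,\bC^{n-1}}(X)$ coming from $(\bC^{n-1})^{\underline{\otimes}\nu}$; this reduces, via Remark \ref{rmk:uniq_determ_actions_of_u_plus_minus}, to checking compatibility with the $\bC E_{1,1} \oplus \gl(U_{n-1})$-actions, which is immediate from the explicit description of the complex tensor power.
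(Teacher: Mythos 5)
Your proposal is correct and follows essentially the same route as the paper: the paper's proof is simply the observation that the statement ``follows directly from Proposition \ref{prop:compatab_comp_tens_power}'' (applied with $N=n$), i.e.\ that $\mathfrak{Res}_{n-1,n}\circ SW_{\nu,\bC^n}$ is computed by pulling $(\cdot)^{\gl_{n-1}^{\perp}}$ inside the $\Hom$ and then invoking the compatibility of the complex tensor power with that invariants functor. You have merely spelled out the interchange of $\Hom$ with invariants and the equivariance check, which the paper leaves implicit.
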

\begin{proof}
\InnaD{Follows directly from Proposition \ref{prop:compatab_comp_tens_power}.}
\end{proof}

\begin{corollary}\label{cor:overline_Res_nu_n_compatible}
 We have $\widehat{\InnaD{\mathfrak{Res}}}_{n-1, n} \circ \widehat{SW}_{\nu, \bC^n} \cong \widehat{SW}_{\nu, \bC^{n-1}}$, i.e. there exists a natural isomorphism $\InnaA{\hat{\eta}}_n: \widehat{\InnaD{\mathfrak{Res}}}_{n-1, n} \circ \widehat{SW}_{\nu, \bC^n} \rightarrow \widehat{SW}_{\nu, \bC^{n-1}}$.
\end{corollary}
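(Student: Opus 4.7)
The plan is to obtain $\hat{\eta}_n$ by simply pushing down the natural isomorphism $\eta_n$ of Proposition \ref{prop:Res_nu_n_compatible} through the two localization functors $\hat{\pi}_n$ and $\hat{\pi}_{n-1}$, and then verifying that the resulting diagram commutes.

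First, I would recall the commutative square established in Subsection \ref{ssec:Res_functors}, namely
\[
\hat{\pi}_{n-1} \circ \mathfrak{Res}_{n-1, n} \;\cong\; \widehat{\mathfrak{Res}}_{n-1, n} \circ \hat{\pi}_{n},
\]
which is the very property by which $\widehat{\mathfrak{Res}}_{n-1, n}$ was defined. Combined with the definition $\widehat{SW}_{\nu, \bC^n} = \hat{\pi}_n \circ SW_{\nu, \bC^n}$, this gives the chain of natural isomorphisms
\[
\widehat{\mathfrak{Res}}_{n-1, n} \circ \widehat{SW}_{\nu, \bC^n}
\;=\; \widehat{\mathfrak{Res}}_{n-1, n} \circ \hat{\pi}_n \circ SW_{\nu, \bC^n}
\;\cong\; \hat{\pi}_{n-1} \circ \mathfrak{Res}_{n-1, n} \circ SW_{\nu, \bC^n}.
\]
At this point I would invoke Proposition \ref{prop:Res_nu_n_compatible} and apply the functor $\hat{\pi}_{n-1}$ to the natural isomorphism $\eta_n : \mathfrak{Res}_{n-1, n} \circ SW_{\nu, \bC^n} \stackrel{\sim}{\to} SW_{\nu, \bC^{n-1}}$, obtaining
\[
\hat{\pi}_{n-1} \circ \mathfrak{Res}_{n-1, n} \circ SW_{\nu, \bC^n}
\;\stackrel{\hat{\pi}_{n-1}(\eta_n)}{\longrightarrow}\;
\hat{\pi}_{n-1} \circ SW_{\nu, \bC^{n-1}}
\;=\; \widehat{SW}_{\nu, \bC^{n-1}}.
\]
Composing these gives the desired natural isomorphism $\hat{\eta}_n$.

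There is essentially no obstacle here: the content of the corollary is purely formal, once Proposition \ref{prop:Res_nu_n_compatible} and the universal property of the Serre localization $\hat{\pi}_n$ are in hand. The only thing to double-check is that the intermediate natural transformation $\hat{\pi}_{n-1}(\eta_n)$ is well-defined at the level of the localized categories, which follows from the exactness of $\hat{\pi}_{n-1}$ and the fact that the components of $\eta_n$ are genuine isomorphisms in $\co^{\mathfrak{p}_{n-1}}_{\nu, \bC^{n-1}}$ (so, a fortiori, their images under $\hat{\pi}_{n-1}$ remain isomorphisms).
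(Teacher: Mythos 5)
Your proposal is correct and follows essentially the same route as the paper: both arguments combine the defining commutative square $\hat{\pi}_{n-1} \circ \mathfrak{Res}_{n-1,n} \cong \widehat{\mathfrak{Res}}_{n-1,n} \circ \hat{\pi}_n$, the definition $\widehat{SW}_{\nu,\bC^n} = \hat{\pi}_n \circ SW_{\nu,\bC^n}$, and Proposition \ref{prop:Res_nu_n_compatible}, the only difference being that the paper encodes the composition in a single commutative diagram while you spell out the chain of natural isomorphisms explicitly.
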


\begin{proof}
 By definition of $\widehat{\InnaD{\mathfrak{Res}}}_{n-1, n}, \widehat{SW}_{\nu, \bC^n}$, together with Proposition \ref{prop:Res_nu_n_compatible}, we have a commutative diagram
 $$ \xymatrix{ &\Dab^{\InnaE{op}} \ar[rr]_{{SW}_{\nu, \bC^n}} \ar@/^1.3pc/[rrdrru]^{{SW}_{\nu, \bC^{n-1}}} \ar[rrdd]_{\widehat{SW}_{\nu, \bC^n}} &{} &\co^{\mathfrak{p}_{n}}_{\nu, \bC^{n}} \ar[dd]^{\hat{\pi}_n} \ar[rr]_{\InnaC{\InnaD{\mathfrak{Res}}}_{n-1, n}} &{} &\co^{\mathfrak{p}_{n-1}}_{\nu, \bC^{n-1}} \ar[dd]^{\hat{\pi}_{n-1}} \\ &{} &{} &{} &{} &{} \\ &{} &{} &\widehat{\co}^{\mathfrak{p}_{n}}_{\nu, \bC^{n}} \ar[rr]_{\widehat{\InnaD{\mathfrak{Res}}}_{n-1, n}} &{} &\widehat{\co}^{\mathfrak{p}_{n-1}}_{\nu, \bC^{n-1}} } $$
 Since $\hat{\pi}_{n-1} \circ {SW}_{\nu, \bC^{n-1}} =:  \widehat{SW}_{\nu, \bC^{n-1}}$, we get $\widehat{\InnaD{\mathfrak{Res}}}_{n-1, n} \circ \widehat{SW}_{\nu, \bC^n} \cong \widehat{SW}_{\nu, \bC^{n-1}}$. 
\end{proof}

\mbox{}

%\subsection{Schur-Weyl duality: the limit case}\label{ssec:sw_limit_thrm} 
\begin{notation}
 For each $k \in \bZ_+$, $Fil_k(\Dab)$ is defined to be the Serre subcategory \InnaD{of $\Dab$} generated by the simple objects $\mathbf{L}({\lambda})$ such that the Young diagram $\lambda$ satisfies either of the following conditions:
 \begin{itemize}
  \item $\lambda$ belongs to a trivial $\stackrel{\nu}{\sim}$-class, and $\ell(\lambda) \leq k$.
  \item $\lambda$ belongs to a non-trivial $\stackrel{\nu}{\sim}$-class $\{ \lambda^{(i)} \}_{i \geq 0}$, $\lambda = \lambda^{(i)}$, and $\ell(\lambda^{(i+1)}) \leq k$.
 \end{itemize}
\end{notation}

\InnaD{This defines a $\bZ_+$-filtration \InnaD{on the objects} of the category $\Dab$. That is, we have:
$$\Dab \cong \varinjlim_{k \in \bZ_+} Fil_k(\Dab)$$}

\begin{lemma}
 The functors $\widehat{SW}_{\nu, \bC^n}$ are $\bZ_+$-filtered shortening functors \InnaD{(see \cite{EA1} for the relevant definitions)}.
\end{lemma}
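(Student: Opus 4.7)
The plan is to deduce both properties directly from Lemma \ref{lem:SW_images_simples}, which completely describes the effect of $\widehat{SW}_{\nu, \bC^n}$ on simple objects of $\Dab$. The shortening property is essentially immediate: Lemma \ref{lem:SW_images_simples} states that every simple $\mathbf{L}(\lambda)$ is sent either to zero or to a simple object of $\widehat{\co}^{\mathfrak{p}_n}_{\nu, \bC^n}$ of the form $\hat{\pi}_n(L_{\mathfrak{p}_n}(\nu-\abs{\mu},\mu))$. Combined with the exactness of $\widehat{SW}_{\nu, \bC^n}$ (Theorem \ref{thrm:SW_almost_equiv}), this is the defining property of a shortening functor in the sense of \cite{EA1}.

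For the $\bZ_+$-filtered property, I need to exhibit, for each $k$, a functor $Fil_k(\Dab) \to Fil_k(\widehat{\co}^{\mathfrak{p}_n}_{\nu, \bC^n})$ compatible with the inclusions into the ambient categories. Since the filtration subcategories are Serre subcategories generated by specific simples and $\widehat{SW}_{\nu, \bC^n}$ is exact, it suffices to verify that the image of each generating simple of $Fil_k(\Dab)$ lies in $Fil_k(\widehat{\co}^{\mathfrak{p}_n}_{\nu, \bC^n})$. I split into the two cases in the definition of $Fil_k(\Dab)$:
\begin{itemize}
 \item If $\lambda$ belongs to a trivial $\stackrel{\nu}{\sim}$-class with $\ell(\lambda)\le k$, then by Lemma \ref{lem:SW_images_simples}, $\widehat{SW}_{\nu, \bC^n}(\mathbf{L}(\lambda)) \cong \hat{\pi}_n(L_{\mathfrak{p}_n}(\nu-\abs{\lambda}, \lambda))$, which by definition lies in $Fil_k(\widehat{\co}^{\mathfrak{p}_n}_{\nu, \bC^n})$ (or is zero if $\ell(\lambda)\ge n$).
 \item If $\lambda = \lambda^{(i)}$ in a non-trivial class $\{\lambda^{(j)}\}_{j\ge 0}$ with $\ell(\lambda^{(i+1)}) \le k$, then $\widehat{SW}_{\nu, \bC^n}(\mathbf{L}(\lambda^{(i)})) \cong \hat{\pi}_n(L_{\mathfrak{p}_n}(\nu-\abs{\lambda^{(i+1)}}, \lambda^{(i+1)}))$, which lies in $Fil_k(\widehat{\co}^{\mathfrak{p}_n}_{\nu, \bC^n})$ since the associated Young diagram has length at most $k$.
\end{itemize}

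No step is really an obstacle here; the substance is packaged into Lemma \ref{lem:SW_images_simples}, and the careful matching of the two cases in the definition of $Fil_k(\Dab)$ with the two cases of the lemma is exactly designed so that the filtration levels match. The only mild subtlety is remembering that in the non-trivial-class case the filtration level of $\mathbf{L}(\lambda^{(i)})$ is measured by $\ell(\lambda^{(i+1)})$ rather than $\ell(\lambda^{(i)})$, which is precisely what is needed for compatibility with the image under $\widehat{SW}_{\nu, \bC^n}$.
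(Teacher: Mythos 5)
Your proof is correct and takes exactly the same route as the paper, which states the result as an immediate consequence of the exactness of $\widehat{SW}_{\nu, \bC^n}$ and Lemma \ref{lem:SW_images_simples}. You have simply unpacked that one-line proof, making explicit the case-by-case match between the filtration conditions on $\Dab$ and on $\widehat{\co}^{\mathfrak{p}_n}_{\nu, \bC^n}$, which is indeed the only thing that requires any care.
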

\begin{proof}
Follows from the fact that $\widehat{SW}_{\nu, \bC^n}$ are exact, together with Lemma \ref{lem:SW_images_simples}.
\end{proof}
\InnaD{
This Lemma, together with Corollary \ref{cor:overline_Res_nu_n_compatible}, gives us a contravariant ($\bZ_+$-filtered shortening) functor
 \begin{align*}
  \widehat{SW}_{\nu, \InnaD{\text{lim}}}: \Dab &\longrightarrow \varprojlim_{\InnaD{n \geq 1}, \text{ restr}} \widehat{\co}^{\mathfrak{p}_n}_{\nu, \bC^n} \\
  X &\mapsto \left( \{\widehat{SW}_{\nu,  \bC^n}(X)\}_{\InnaD{n \geq 1}}, \{\InnaA{\hat{\eta}}_{n} (X)\}_{n \geq 2} \right)\\
  (f:X \rightarrow Y) &\mapsto \{\widehat{SW}_{\nu,  \bC^n}(f): \widehat{SW}_{\nu,  \bC^n}(Y) \rightarrow \widehat{SW}_{\nu,  \bC^n}(X)\}_{\InnaD{n \geq 1}}
 \end{align*}
 This functor is given by the universal property of the restricted inverse limit described in \cite{EA1}, and makes the diagram below commutative: 
$$\xymatrix{&{} &{} &\widehat{\co}^{\mathfrak{p}_{n}}_{\nu, \bC^{n}} \\ &\underline{Rep}^{ab}(S_{\nu})^{op} \ar[rr]_{\widehat{SW}_{\nu, \InnaD{\text{lim}}}}  \ar[rru]^{\widehat{SW}_{\nu, \bC^n}}  &{} &\varprojlim_{n \geq 1, \text{ restr}} \widehat{\co}^{\mathfrak{p}_{n}}_{\nu, \bC^{n}} \ar[u]_{\mathbf{Pr}_n}  }$$
\InnaE{(here $\mathbf{Pr}_n$ is the canonical projection functor).}
}
% We will also use the following notation:
% \begin{notation}
%  Let $i \leq j$. Define the natural isomorphism $$\InnaA{\hat{\eta}}_{ij}: \widehat{Res}_{\nu, i+1} \circ ...\circ \widehat{Res}_{\nu, j} \circ \widehat{SW}_{\nu, \bC^j} \stackrel{\sim}{\rightarrow} \widehat{SW}_{\nu, \bC^i} $$ to be $$\InnaA{\hat{\eta}}_{ij}:= \left( \widehat{Res}_{\nu, i+1} \circ ...\circ \widehat{Res}_{\nu, j-1} \right) (\InnaA{\hat{\eta}}_j)$$
%  (the natural isomorphism $\InnaA{\hat{\eta}}_j$ defined in Corollary \ref{cor:overline_Res_nu_n_compatible}).
%  
%  In particular, $\InnaA{\hat{\eta}}_{ii} := \id_{\widehat{SW}_{\nu, \bC^i}}$.
% \end{notation}

We now show that there is an equivalence of categories $\Dab^{op}$ and $\varprojlim_{\InnaD{n \geq 1}, \text{ restr}} \widehat{\co}^{\mathfrak{p}_n}_{\nu, \bC^n}$.

\begin{theorem}\label{thrm:SW_limit_equivalence}
 The Schur-Weyl contravariant functors $\widehat{SW}_{\nu, \bC^n}$ induce an anti-equivalence of abelian categories, given by the \InnaE{(exact)} \InnaD{contravariant functor
$$  \widehat{SW}_{\nu, \InnaD{\text{lim}}}: \Dab \longrightarrow \varprojlim_{\InnaD{n \geq 1}, \text{ restr}} \widehat{\co}^{\mathfrak{p}_n}_{\nu, \bC^n}$$}
\end{theorem}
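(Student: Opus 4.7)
The plan is to reduce to the finite-dimensional case (Theorem \ref{thrm:SW_almost_equiv}) via the $\bZ_+$-filtered framework of \cite{EA1}. Since $\Dab \cong \varinjlim_k Fil_k(\Dab)$ and, by the description of the restricted inverse limit,
\[
\varprojlim_{n \geq 1,\text{ restr}} \widehat{\co}^{\mathfrak{p}_n}_{\nu, \bC^n} \;\cong\; \varinjlim_k \varprojlim_n Fil_k(\widehat{\co}^{\mathfrak{p}_n}_{\nu, \bC^n}),
\]
and since each $\widehat{SW}_{\nu, \bC^n}$ is a $\bZ_+$-filtered shortening functor, it suffices to establish, for each fixed $k \geq 0$, a compatible (in $k$) anti-equivalence of abelian categories
\[
\widehat{SW}_{\nu,\text{lim}}\big|_{Fil_k}: \; Fil_k(\Dab)^{op} \longrightarrow \varprojlim_{n \geq 1} Fil_k(\widehat{\co}^{\mathfrak{p}_n}_{\nu, \bC^n}).
\]

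The first main step is to show that for every $n \geq k+1$, the restriction
\[
\widehat{SW}_{\nu, \bC^n}\big|_{Fil_k(\Dab)^{op}}: Fil_k(\Dab)^{op} \longrightarrow Fil_k(\widehat{\co}^{\mathfrak{p}_n}_{\nu, \bC^n})
\]
is itself an anti-equivalence. By Theorem \ref{thrm:SW_almost_equiv}, $\widehat{SW}_{\nu, \bC^n}$ induces an anti-equivalence $\Dab^{op}/\Ker_n \xrightarrow{\sim} \widehat{\co}^{\mathfrak{p}_n}_{\nu, \bC^n}$, where $\Ker_n := \Ker(\widehat{SW}_{\nu, \bC^n})$ is a Serre subcategory. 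By Lemma \ref{lem:SW_images_simples}, $\Ker_n$ is generated (as a Serre subcategory) by the simples $\mathbf{L}(\lambda)$ whose associated Young diagram ($\lambda$ in the trivial case, $\lambda^{(i+1)}$ in the non-trivial case) has length $\geq n$; in particular, $\Ker_n \cap Fil_k(\Dab) = 0$ whenever $n > k$. A standard Serre-quotient computation (morphisms in $\Dab^{op}/\Ker_n$ via roofs, combined with the fact that $Fil_k(\Dab)$ is Serre and disjoint from $\Ker_n$ on simples) then shows that the natural map $Fil_k(\Dab)^{op} \hookrightarrow \Dab^{op}/\Ker_n$ is fully faithful. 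Its essential image is a full subcategory of $\widehat{\co}^{\mathfrak{p}_n}_{\nu, \bC^n}$, closed under subquotients and extensions (thanks to exactness and full faithfulness), and --- by the explicit match of simples in Lemma \ref{lem:SW_images_simples} --- containing exactly the generators of $Fil_k(\widehat{\co}^{\mathfrak{p}_n}_{\nu, \bC^n})$. Hence the essential image equals $Fil_k(\widehat{\co}^{\mathfrak{p}_n}_{\nu, \bC^n})$, and the restricted functor is an anti-equivalence.

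For the second step, the compatibility $\widehat{\mathfrak{Res}}_{n-1, n} \circ \widehat{SW}_{\nu, \bC^n} \cong \widehat{SW}_{\nu, \bC^{n-1}}$ (Corollary \ref{cor:overline_Res_nu_n_compatible}) combined with the anti-equivalences of the first step forces the transition functors $\widehat{\mathfrak{Res}}_{n-1, n}: Fil_k(\widehat{\co}^{\mathfrak{p}_n}_{\nu, \bC^n}) \to Fil_k(\widehat{\co}^{\mathfrak{p}_{n-1}}_{\nu, \bC^{n-1}})$ to be equivalences of categories whenever $n \geq k+2$. Consequently the inverse system stabilizes, and $\varprojlim_n Fil_k(\widehat{\co}^{\mathfrak{p}_n}_{\nu, \bC^n}) \cong Fil_k(\widehat{\co}^{\mathfrak{p}_{k+2}}_{\nu, \bC^{k+2}}) \cong Fil_k(\Dab)^{op}$ via the canonical projection composed with $\widehat{SW}_{\nu, \bC^{k+2}}$; passing to $\varinjlim_k$ completes the proof. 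The main obstacle I expect lies in the essential surjectivity portion of the first step --- specifically, in verifying that the essential image of $Fil_k(\Dab)^{op}$ is genuinely closed under extensions inside $Fil_k(\widehat{\co}^{\mathfrak{p}_n}_{\nu, \bC^n})$. This closure should follow from exactness together with full faithfulness (which guarantee preservation of $\mathrm{Ext}^1$-groups between objects in the image), but some care is required to ensure that every extension of simples of $Fil_k(\widehat{\co}^{\mathfrak{p}_n}_{\nu, \bC^n})$ is actually realized by an object already in the image, rather than merely by an object of $\widehat{\co}^{\mathfrak{p}_n}_{\nu, \bC^n}$ with a subquotient outside $Fil_k$.
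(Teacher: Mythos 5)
Your overall strategy coincides with the paper's: reduce to the ``stabilization condition'' of \cite[Proposition 5.1.10]{EA1}, i.e.\ show that for each $k$ and all $n$ large, the restricted contravariant functor $\widehat{SW}_{\nu,\bC^n}\colon Fil_k(\Dab)\to Fil_k(\widehat\co^{\mathfrak{p}_n}_{\nu,\bC^n})$ is already an anti-equivalence. Your full-faithfulness step is correct: if $\mathcal B$ is a Serre subcategory of $\mathcal A$ with $\mathcal B\cap\mathcal S=0$, then the formula $\Hom_{\mathcal A/\mathcal S}(X,Y)=\varinjlim\Hom_{\mathcal A}(X',Y/Y')$ collapses to $\Hom_{\mathcal A}(X,Y)$ for $X,Y\in\mathcal B$, so $\mathcal B\to\mathcal A/\mathcal S$ is fully faithful. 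Combined with Lemma~\ref{lem:SW_images_simples} (which gives the disjointness $Fil_k(\Dab)\cap\Ker(\widehat{SW}_{\nu,\bC^n})=0$ for $n>k$) and Theorem~\ref{thrm:SW_almost_equiv}, this gives full faithfulness of $\widehat{SW}_{\nu,\bC^n}$ on $Fil_k(\Dab)$, as in the paper.

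The gap you flag in the essential surjectivity step is genuine, and your proposed fix---``closure under extensions should follow from exactness together with full faithfulness''---does not work in the abstract. If $\pi\colon\mathcal A\to\mathcal A/\mathcal S$ is a Serre quotient and $\mathcal B$ is a Serre subcategory disjoint from $\mathcal S$, then $\pi(\mathcal B)$ is a full subcategory closed under subquotients but need \emph{not} be closed under extensions: an extension in $\mathcal A/\mathcal S$ between objects $\pi(X_1),\pi(X_2)$ with $X_i\in\mathcal B$ lifts to some $Z\in\mathcal A$ with $\pi(Z)$ realizing the extension, but $Z$ may have Jordan--H\"older constituents in $\mathcal S$ glued nontrivially between the $\mathcal B$-constituents, so that no replacement $Z'\in\mathcal B$ with $\pi(Z')\cong\pi(Z)$ exists. (For instance, take $\mathcal A$ representations of the quiver $1\to 2\to 3$, $\mathcal S$ generated by the simple $L_2$, and $\mathcal B$ generated by $L_1$ and $L_3$; then $\mathcal B$ is semisimple, yet the indecomposable projective--injective of length $3$ descends to a non-split extension of $\pi(L_1)$ by $\pi(L_3)$ in $\mathcal A/\mathcal S$.) In the situation at hand one would have to rule out precisely this phenomenon inside each non-semisimple block of $\Dab$, which requires input about the highest-weight structure. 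The paper sidesteps the issue entirely: essential surjectivity of $\widehat{SW}_{\nu,\bC^n}\big|_{Fil_k(\mathcal B_\lambda)}$ is established by exhibiting a bijection on isomorphism classes of \emph{indecomposable projectives} between $Fil_k(\mathcal B_\lambda)$ and $\hat\pi(Fil_k(\mathfrak B_{\lambda,n}))$, invoking the proof of \cite[Theorem 7.2.7]{EA}. Your second step (stabilization of the transition functors $\widehat{\mathfrak{Res}}_{n-1,n}$ on $Fil_k$) is a correct unwinding of \cite[Proposition 5.1.10]{EA1} and is fine once the first step is repaired.
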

\begin{proof}
The functors $\widehat{SW}_{\nu, \bC^n}$ are exact for each $\InnaD{n \geq 1}$, which means (see \cite[Section 3.2]{EA1}) that the functor $\widehat{SW}_{\nu, \InnaD{\text{lim}}}$ is exact as well.

To see that it is an anti-equivalence, we will use \cite[Proposition 5.1.10]{EA1}. All we need to check is that the functors $\widehat{SW}_{\nu, \bC^n}$ satisfy the ``stabilization condition'' (\cite[Condition 5.1.9]{EA1}): that is, for each $k \in \bZ_+$, there exists $n_k \in \bZ_+$ such that $$\widehat{SW}_{\nu, \bC^n}: Fil_k(\Dab) \rightarrow Fil_k(\widehat{\co}^{\mathfrak{p}_n}_{\nu, \bC^n})$$ is an anti-equivalence of categories for any $n \geq n_k$.

Indeed, let $k \in \bZ_+$, and let $n \geq k +1$. 

The category $ Fil_k(\Dab)$ decomposes into blocks (corresponding to the blocks of $\Dab$), and the category $Fil_k(\widehat{\co}^{\mathfrak{p}_n}_{\nu, \bC^n})$ decomposes into blocks corresponding to the blocks of $\widehat{\co}^{\mathfrak{p}_n}_{\nu, \bC^n}$. 

The requirement $n \geq k +1$ together with Lemma \ref{lem:SW_images_simples} means that for any semisimple block of $ Fil_k(\Dab)$, the simple object $\mathbf{L}(\lambda)$ corresponding to this block is not sent to zero under $\widehat{SW}_{\nu, \bC^n}$. This, in turn, implies that $\widehat{SW}_{\nu, \bC^n}$ induces an anti-equivalence between each semisimple block of $ Fil_k(\Dab)$ and the corresponding semisimple block of $Fil_k(\widehat{\co}^{\mathfrak{p}_n}_{\nu, \bC^n})$.

\InnaD{
Now, fix a non-semisimple block $\mathcal{B}_{\lambda}$ of $\Dab$, and denote by $ Fil_k(\mathcal{B}_{\lambda})$ the corresponding non-semisimple block of $ Fil_k(\Dab)$. We denote by $\mathfrak{B}_{\lambda, n}$ the corresponding block in $\co^{\mathfrak{p}_n}_{\nu, \bC^n}$. The corresponding block of $Fil_k(\widehat{\co}^{\mathfrak{p}_n}_{\nu, \bC^n})$ will then be $\hat{\pi}(Fil_k(\mathfrak{B}_{\lambda, n}))$.

We now check that the contravariant functor $$\widehat{SW}_{\nu, \bC^n}\rvert_{Fil_k(\mathcal{B}_{\lambda})} :  Fil_k(\mathcal{B}_{\lambda}) \rightarrow \hat{\pi}(Fil_k(\mathfrak{B}_{\lambda, n}))$$ is an anti-equivalence of categories when $n \geq k+1$. 

Since $n \geq k+1$, the Serre subcategories $Fil_k(\mathcal{B}_{\lambda})$ and $Ker(\widehat{SW}_{\nu, \bC^n})$ of $\Dab$ have trivial intersection (see Lemma \ref{lem:SW_images_simples}), which means that the restriction of $\widehat{SW}_{\nu, \bC^n}$ to the Serre subcategory $  Fil_k(\mathcal{B}_{\lambda})$ is both faithful and full (the latter follows from Theorem \ref{thrm:SW_almost_equiv}).

It remains to establish that the functor $\widehat{SW}_{\nu, \bC^n}\rvert_{Fil_k(\mathcal{B}_{\lambda})}$ is essentially surjective when 

$n \geq k+1$. This can be done by checking that this functor induces a bijection between the sets of isomorphism classes of indecomposable projective objects in $Fil_k(\mathcal{B}_{\lambda}), \hat{\pi}(Fil_k(\mathfrak{B}_{\lambda, n}))$ respectively (see \cite[Proof of Theorem 7.2.7]{EA} where we use a similar technique). The latter fact follows from the proof of \cite[Theorem 7.2.7]{EA}.

%%%%%%%%%%%%%%%%

% \InnaA{Recall from Theorem \ref{thrm:hat_overline_SW_equiv} that we have an anti-equivalence of categories $$\widehat{\overline{SW}}_{\nu}: \overline{\pi}(\mathcal{B}_{\lambda}) \rightarrow \hat{\pi}(\mathfrak{B}_{\lambda, n})$$
% By definition of $Fil_k(\mathcal{B}_{\lambda})$, the restriction $$\overline{\pi}: Fil_k(\mathcal{B}_{\lambda}) \longrightarrow \overline{\pi}(Fil_k(\mathcal{B}_{\lambda}))$$ is an equivalence.
% Thus the contravariant functor $$\widehat{SW}_{\nu, \bC^n}: Fil_k(\mathcal{B}_{\lambda}) \rightarrow Fil_k(\hat{\pi}(\mathfrak{B}_{\lambda, n}))$$ is fully faithful.} Similarly to the proof of Theorem \ref{thrm:hat_overline_SW_equiv}, one also shows that this functor is essentially surjective (as in the proof of Theorem \ref{thrm:hat_overline_SW_equiv}, one only needs to check that any injective object in $Fil_k(\hat{\pi}(\mathfrak{B}_{\lambda, n}))$ has a preimage in $Fil_k(\mathcal{B}_{\lambda})$; the latter follows from Lemma \ref{lem:SW_injectives}).
% 
Thus $\widehat{SW}_{\nu, \bC^n}: Fil_k(\mathcal{B}_{\lambda}) \rightarrow Fil_k(\hat{\pi}(\mathfrak{B}_{\lambda, n}))$ is an anti-equivalence of categories for 

$n \geq k+1$, and 
$$\widehat{SW}_{\nu, \bC^n}: Fil_k(\Dab) \rightarrow Fil_k(\widehat{\co}^{\mathfrak{p}_n}_{\nu, \bC^n})$$ is an anti-equivalence of categories for $n \geq k+1$, which completes the proof.
}
% 
% Recall from Theorem \ref{thrm:hat_overline_SW_equiv} that the \InnaA{contravariant} functor $$\widehat{SW}_{\nu, \bC^N}: \mathcal{B}_{\lambda} \rightarrow \hat{\pi}_N (\mathfrak{B}_{\lambda, N})$$ makes $\hat{\pi}_N (\mathfrak{B}_{\lambda, N})$ a Serre quotient of $\mathcal{B}^{op}_{\lambda}$. The kernel of this functor is the Serre subcategory generated by simple objects $\mathbf{L}(\lambda^{(i)})$ such that $\ell(\lambda^{(i+1)}) \geq N$. 
% 
% The assumption $N \geq k+1$ assures that the Serre subcategories $Fil_k(\mathcal{B}_{\lambda})$ and $Ker(\widehat{SW}_{\nu, \bC^N})$ have trivial intersection, and thus the functor $$\widehat{SW}_{\nu, \bC^N}: Fil_k(\mathcal{B}_{\lambda}) \rightarrow Fil_k(\mathfrak{B}_{\lambda, N})$$ is fully faithful.
 \end{proof}

\section{Schur-Weyl duality for \texorpdfstring{$\underline{Rep}^{ab}(S_{\nu})$ and $\gl_{\infty}$}{Deligne category and the infinite Lie algebra gl}}\label{sec:SW_duality_inf_dim}
\subsection{}
Let $\bC^{\infty}$ be a complex vector space with a countable basis $e_1, e_2, e_3, ...$. Fix $\triv:=e_1$ and $U_{\infty}:=span_{\bC}(e_2, e_3, ...)$. 

\begin{lemma}\label{lem:SW_Gamma_compatible}
 We have a commutative diagram
 $$\xymatrix{&\underline{Rep}^{ab}(S_{\nu})^{op} \ar[rr]_{\widehat{SW}_{\nu, \InnaD{\text{lim}}}}  \ar[rrd]_{\widehat{SW}_{\nu, \bC^{\infty}}} &{} &\varprojlim_{n \geq 1, \text{ restr}} \widehat{\co}^{\mathfrak{p}_{n}}_{\nu, \bC^{n}} \\ &{} &{} &\widehat{\co}^{\mathfrak{p}_{\infty}}_{\nu, \bC^{\infty}} \ar[u]_{\widehat{\Gamma}_{\text{lim}}} }$$
Namely, there is a natural isomorphism $\hat{\eta}: \widehat{\Gamma}_{\text{lim}} \circ \widehat{SW}_{\nu, \bC^{\infty}} \rightarrow \widehat{SW}_{\nu, \InnaD{\text{lim}}}$.
\end{lemma}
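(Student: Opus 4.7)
The plan is to reduce this to the universal property of the restricted inverse limit. Both functors in question are contravariant functors $\underline{Rep}^{ab}(S_{\nu})^{op} \to \varprojlim_{n \geq 1, \text{restr}} \widehat{\co}^{\mathfrak{p}_n}_{\nu, \bC^n}$, and to identify them it suffices to show that composing either with the projection $\mathbf{Pr}_n$ to the $n$-th factor yields $\widehat{SW}_{\nu, \bC^n}$, in a way compatible with the structural isomorphisms $\widehat{\mathfrak{Res}}_{n-1,n}$.

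First I would establish the ``finite-level'' compatibility: for each fixed $n \geq 1$, a natural isomorphism $\widehat{\Gamma}_n \circ \widehat{SW}_{\nu, \bC^{\infty}} \cong \widehat{SW}_{\nu, \bC^n}$. By Proposition \ref{prop:compatab_comp_tens_power}, applied with $N=\infty$, there is a canonical isomorphism $\bigl((\bC^{\infty})^{\underline{\otimes} \nu}\bigr)^{\gl_n^{\perp}} \cong (\bC^n)^{\underline{\otimes} \nu}$ in $Ind\text{-}(\underline{Rep}^{ab}(S_{\nu}) \boxtimes \co^{\mathfrak{p}_n}_{\nu, \bC^n})$. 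Since taking $\gl_n^{\perp}$-invariants on the target of a Hom-space equals applying $\Gamma_n$ to the resulting $\gl_{\infty}$-module structure on $\Hom$, this yields a natural isomorphism $\Gamma_n \circ SW_{\nu, \bC^{\infty}} \cong SW_{\nu, \bC^n}$. Composing with the quotient functor and using the commutative square $\widehat{\Gamma}_n \circ \hat{\pi}_{\infty} = \hat{\pi}_n \circ \Gamma_n$ from Subsection \ref{ssec:Spec_functors} gives the required $\widehat{\Gamma}_n \circ \widehat{SW}_{\nu, \bC^{\infty}} \cong \widehat{SW}_{\nu, \bC^n}$.

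Next I would verify that these isomorphisms, as $n$ varies, are compatible with the transition functors $\widehat{\mathfrak{Res}}_{n-1,n}$: namely, that they intertwine the natural isomorphisms $\hat{\eta}_n$ of Corollary \ref{cor:overline_Res_nu_n_compatible} with the analogous isomorphisms $\widehat{\mathfrak{Res}}_{n-1,n} \circ \widehat{\Gamma}_n \cong \widehat{\Gamma}_{n-1}$ induced by Lemma \ref{lem:Gamma_res_compat} at the level of $\gl_N$-polynomial representations (and carried over to the parabolic category $\co$). This is a routine diagram chase that unpacks the definitions of $\Gamma_n$ and $\mathfrak{Res}_{n-1,n}$ as invariant-taking functors, and of $\hat{\eta}_n$ as coming from Proposition \ref{prop:compatab_comp_tens_power}.

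Finally, I invoke the universal property of the restricted inverse limit (as in \cite{EA1}, and as used in the construction of $\widehat{SW}_{\nu, \text{lim}}$ itself just after this subsection): both $\widehat{\Gamma}_{\text{lim}} \circ \widehat{SW}_{\nu, \bC^{\infty}}$ and $\widehat{SW}_{\nu, \text{lim}}$ are contravariant shortening functors from $\underline{Rep}^{ab}(S_{\nu})$ whose projections to the $n$-th factor are naturally isomorphic to $\widehat{SW}_{\nu, \bC^n}$, compatibly with the transition maps. Hence they are naturally isomorphic, producing the desired $\hat{\eta}$. The main obstacle is the bookkeeping involved in checking the compatibility of the family of natural isomorphisms at step two; the actual construction of $\hat{\eta}$ itself is automatic once this is in place.
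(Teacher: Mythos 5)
Your proposal is correct and takes essentially the same approach as the paper: the paper likewise reduces to the finite-level commutative squares, using Proposition \ref{prop:compatab_comp_tens_power} for $\Gamma_n \circ SW_{\nu, \bC^{\infty}} \cong SW_{\nu, \bC^n}$ and the square $\widehat{\Gamma}_n \circ \hat{\pi}_{\infty} \cong \hat{\pi}_n \circ \Gamma_n$ from the proof of Proposition \ref{prop:inv_lim_hat_gl_infty_par_cat}, then concludes by the universal property of the restricted inverse limit. You are if anything slightly more careful than the paper in explicitly flagging the need to check compatibility of the finite-level isomorphisms with the transition functors $\widehat{\mathfrak{Res}}_{n-1,n}$, a step the paper leaves implicit.
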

\begin{proof}
In order to prove this \InnaD{statement}, we will show that for any $\InnaD{n \geq 1}$, the following diagram is commutative:
% $$\xymatrix{&\underline{Rep}^{ab}(S_{\nu})^{op} \ar[rr]_-{\widehat{SW}_{\nu, \bC^n}} \ar[rrd]_{\widehat{SW}_{\nu, \bC^{\infty}}} &{}  &\widehat{\co}^{\mathfrak{p}_{n}}_{\nu, \bC^{n}} \\ &{} &{} &\widehat{\co}^{\mathfrak{p}_{\infty}}_{\nu, \bC^{\infty}} \ar[u]_{\widehat{\Gamma}_n} }$$
% In fact, we will show that the diagram \InnaD{below is commutative}
$$ \xymatrix{&\underline{Rep}^{ab}(S_{\nu})^{op}  \ar[rr]^-{{SW}_{\nu, \bC^n}} \ar[rrd]^{{SW}_{\nu, \bC^{\infty}}} \ar@/^2pc/[rrrr]^{\widehat{SW}_{\nu, \bC^n}} \ar@/_3pc/[rrrrd]_{\widehat{SW}_{\nu, \bC^{\infty}}} &{} &{\co}^{\mathfrak{p}_{n}}_{\nu, \bC^{n}} \ar[rr]_{\hat{\pi}_n} &{}  &\widehat{\co}^{\mathfrak{p}_{n}}_{\nu, \bC^{n}} \\ &{} &{} &{\co}^{\mathfrak{p}_{\infty}}_{\nu, \bC^{\infty}} \ar[rr]^{\hat{\pi}_{\infty}} \ar[u]_{{\Gamma}_n} &{} &\widehat{\co}^{\mathfrak{p}_{\infty}}_{\nu, \bC^{\infty}} \ar[u]_{\widehat{\Gamma}_n}}$$
\InnaD{which will prove the required statement. The commutativity of this diagram  follows from the existence of a natural isomorphism $ {\Gamma}_{n} \circ {SW}_{\nu, \bC^{\infty}} \stackrel{\sim}{\rightarrow} {SW}_{\nu, \bC^n}$ (due to Proposition \ref{prop:compatab_comp_tens_power}) and a natural isomorphism $\widehat{\Gamma}_n \circ \hat{\pi}_{\infty} \cong \hat{\pi}_n \circ \Gamma_n$ (see proof of Proposition \ref{prop:inv_lim_hat_gl_infty_par_cat}). }
 
\end{proof}

Thus we obtain a commutative diagram
 $$\xymatrix{&{} &{} &\widehat{\co}^{\mathfrak{p}_{n}}_{\nu, \bC^{n}} \\ &\underline{Rep}^{ab}(S_{\nu})^{op} \ar[rr]_{\widehat{SW}_{\nu, \InnaD{\text{lim}}}}  \ar[rru]^{\widehat{SW}_{\nu, \bC^n}} \ar[rrd]_{\widehat{SW}_{\nu, \bC^{\infty}}} &{} &\varprojlim_{n \geq 1, \text{ restr}} \widehat{\co}^{\mathfrak{p}_{n}}_{\nu, \bC^{n}} \ar[u]_{\mathbf{Pr}_n} \\ &{} &{} &\widehat{\co}^{\mathfrak{p}_{\infty}}_{\nu, \bC^{\infty}} \ar[u]_{\widehat{\Gamma}_{\text{lim}}} \ar@/_4.7pc/[uu]_{\widehat{\Gamma}_n} }$$

\begin{theorem}\label{thrm:SW_duality_inf_dim}
 The contravariant functor $\widehat{SW}_{\nu, \bC^{\infty}}:\underline{Rep}^{ab}(S_{\nu}) \rightarrow \widehat{\co}^{\mathfrak{p}_{\infty}}_{\nu, \bC^{\infty}}$ is an anti-equivalence of abelian categories.
\end{theorem}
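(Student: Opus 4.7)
The plan is to derive Theorem \ref{thrm:SW_duality_inf_dim} as an essentially formal consequence of the three main ingredients already assembled in the paper. First I would invoke Lemma \ref{lem:SW_Gamma_compatible}, which gives a natural isomorphism
$$\widehat{SW}_{\nu, \text{lim}} \;\cong\; \widehat{\Gamma}_{\text{lim}} \circ \widehat{SW}_{\nu, \bC^{\infty}}$$
of contravariant functors from $\underline{Rep}^{ab}(S_{\nu})$ to $\varprojlim_{n \geq 1, \text{restr}} \widehat{\co}^{\mathfrak{p}_{n}}_{\nu, \bC^{n}}$. This single identity reduces the statement to a two-out-of-three argument about (anti-)equivalences.

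Next I would recall that the right-hand factor $\widehat{\Gamma}_{\text{lim}}: \widehat{\co}^{\mathfrak{p}_{\infty}}_{\nu, \bC^{\infty}} \rightarrow \varprojlim_{n \geq 1, \text{restr}} \widehat{\co}^{\mathfrak{p}_{n}}_{\nu, \bC^{n}}$ is an equivalence of abelian categories by Proposition \ref{prop:inv_lim_hat_gl_infty_par_cat}, while the composition $\widehat{SW}_{\nu, \text{lim}}$ is an anti-equivalence by Theorem \ref{thrm:SW_limit_equivalence}. Given any natural isomorphism $F \cong G \circ H$ where $F$ is an (anti-)equivalence and $G$ is an equivalence, the functor $H$ is forced to be an (anti-)equivalence as well: one may simply take $H' := G^{-1} \circ F$ (using a chosen quasi-inverse of $G$), which is a composition of two (anti-)equivalences, and the isomorphism $F \cong G \circ H$ together with fullness of $G$ produces a natural isomorphism $H \cong H'$. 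Applied to $F = \widehat{SW}_{\nu, \text{lim}}$, $G = \widehat{\Gamma}_{\text{lim}}$, $H = \widehat{SW}_{\nu, \bC^{\infty}}$, this yields the desired conclusion.

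I do not anticipate any serious obstacle in writing this argument; the content has already been extracted in the preceding sections. The only point that might warrant a sentence of care is the formal two-out-of-three step, where one should specify that $\widehat{\Gamma}_{\text{lim}}^{-1} \circ \widehat{SW}_{\nu, \text{lim}}$ gives the desired quasi-inverse (up to taking opposite categories, since $\widehat{SW}_{\nu, \bC^{\infty}}$ is contravariant). All the substantive work—the finite-dimensional Schur-Weyl equivalence on blocks (Theorem \ref{thrm:SW_almost_equiv}), the restricted inverse limit identification of $\widehat{\co}^{\mathfrak{p}_{\infty}}_{\nu, \bC^{\infty}}$, and the stabilization condition needed for Theorem \ref{thrm:SW_limit_equivalence}—has already been carried out, so this final theorem amounts to a short diagram-chase concluding sentence.
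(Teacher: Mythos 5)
Your proposal matches the paper's proof exactly: both invoke Lemma \ref{lem:SW_Gamma_compatible} to obtain $\widehat{SW}_{\nu, \text{lim}} \cong \widehat{\Gamma}_{\text{lim}} \circ \widehat{SW}_{\nu, \bC^{\infty}}$, then cite Proposition \ref{prop:inv_lim_hat_gl_infty_par_cat} and Theorem \ref{thrm:SW_limit_equivalence} to establish that $\widehat{\Gamma}_{\text{lim}}$ is an equivalence and $\widehat{SW}_{\nu, \text{lim}}$ is an anti-equivalence, and conclude by two-out-of-three. The only difference is that you spell out the two-out-of-three step (choosing a quasi-inverse of $\widehat{\Gamma}_{\text{lim}}$) where the paper leaves it implicit; this is a sound elaboration, not a deviation.
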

\begin{proof}
The functor $\widehat{\Gamma}_{\text{lim}}$ is an equivalence of categories (see Lemma \ref{prop:inv_lim_hat_gl_infty_par_cat}), and the functor $\widehat{SW}_{\nu, \InnaD{\text{lim}}}$ is an anti-equivalence of categories (see Theorem \ref{thrm:SW_limit_equivalence}). The commutative diagram above implies that the contravariant functor $\widehat{SW}_{\nu, \bC^{\infty}}$ is an anti-equivalence of categories as well.
\end{proof}

\section{Schur-Weyl functors and duality structures}\label{sec:duality}
%in \texorpdfstring{category $\co^{\mathfrak{p}_{\infty}}_{\bC^{\infty}}$}{the parabolic category O}}
% In this section, we discuss the relation \InnaD{given by} the Schur-Weyl functors \InnaD{between} the \InnaD{duality structures} in the category $\Dab$ (\InnaD{which is} a rigid symmetric monoidal category) and in the category $\co^{\mathfrak{p}_{\infty}}_{\bC^{\infty}}$. The latter \InnaD{is a subcategory of the BGG category $\co$ and therefore inherits a duality functor; see Subsection \ref{ssec:duality_cat_O}}.
\subsection{}
Let $\InnaD{n} \in \bZ_+$. 

Recall the contravariant duality functors $$(\cdot)^{\vee}_n: \left(\co^{\mathfrak{p}_n}_{\nu, \bC^n}\right)^{op} \rightarrow  \co^{\mathfrak{p}_n}_{\nu, \bC^n} $$ discussed in Subsection \ref{ssec:structure_cat_O}. This functor takes \InnaD{polynomial} modules to \InnaD{polynomial} modules, and therefore descends to a duality functor 
$$\widehat{(\cdot)}^{\vee}_n: \left(\widehat{\co}^{\mathfrak{p}_n}_{\nu, \bC^n}\right)^{op} \rightarrow  \widehat{\co}^{\mathfrak{p}_n}_{\nu, \bC^n} $$

\InnaG{Next, the definition of duality functor in $\co^{\mathfrak{p}_n}_{\nu, \bC^n}$} that the duality functors commute with the restriction functors $\InnaD{\mathfrak{Res}}_{n-1, n}$, namely, that for any $n \geq 2$, we have:
$$(\cdot)^{\vee}_{n-1} \circ \InnaD{\mathfrak{Res}}_{n-1, n}^{op} \cong \InnaD{\mathfrak{Res}}_{n-1, n}^{op} \circ (\cdot)^{\vee}_{n} $$

% \begin{proof}
% By definition, given a $\gl_n$-module $M$, we have
% $$M^{\vee} := \bigoplus_{\lambda} M_{\lambda}^*$$
% where $\lambda$ runs over the set weights of $M$ (parametrized by the set of Young diagrams of length at most $n$), and the spaces $M_{\lambda}$ are the corresponding weight spaces (these are always finite-dimensional in $\co^{\mathfrak{p}_n}_{\nu, \bC^n}$). 
% 
% Of course, $$\InnaD{\mathfrak{Res}}_{n-1, n}(M) = M^{\gl^{\perp}_{n-1}} = \bigoplus_{\lambda: \; \ell(\lambda) \leq n-1 } M_{\lambda} $$
% So 
% \end{proof}

This allows us to define duality functors
$$(\cdot)^{\vee}_{\InnaD{\text{lim}}}: \left( \varprojlim_{\InnaD{n \geq 1}, \text{ restr}} {\co}^{\mathfrak{p}_n}_{\nu, \bC^n} \right)^{op} \rightarrow  \varprojlim_{\InnaD{n \geq 1}, \text{ restr}} {\co}^{\mathfrak{p}_n}_{\nu, \bC^n} $$
and $$\widehat{(\cdot)}^{\vee}_{\InnaD{\text{lim}}}: \left( \varprojlim_{\InnaD{n \geq 1}, \text{ restr}} \widehat{\co}^{\mathfrak{p}_n}_{\nu, \bC^n} \right)^{op} \rightarrow  \varprojlim_{\InnaD{n \geq 1}, \text{ restr}} \widehat{\co}^{\mathfrak{p}_n}_{\nu, \bC^n} $$

Under the equivalence \InnaD{$\co^{\mathfrak{p}_{\infty}}_{\nu, \bC^{\infty}} \cong \varprojlim_{\InnaD{n \geq 1}, \text{ restr}} {\co}^{\mathfrak{p}_n}_{\nu, \bC^n}$} established in Subsection \ref{ssec:inv_lim_par_cat_O}, the functor $(\cdot)^{\vee}_{\InnaD{\text{lim}}}$ corresponds to the duality functor
$$(\cdot)^{\vee}_{\infty}: \left(\co^{\mathfrak{p}_{\infty}}_{\nu, \bC^{\infty}}\right)^{op} \rightarrow  \co^{\mathfrak{p}_{\infty}}_{\nu, \bC^{\infty}} $$
discussed in Subsection \ref{ssec:duality_cat_O}. \InnaG{Again, this functor
% This functor takes polynomial $\gl_{\infty}$-modules to polynomial $\gl_{\infty}$-modules, and therefore 
descends to a contravariant duality functor}
$$\widehat{(\cdot)}^{\vee}_{\infty}: \left(\widehat{\co}^{\mathfrak{p}_{\infty}}_{\nu, \bC^{\infty}}\right)^{op} \rightarrow  \widehat{\co}^{\mathfrak{p}_{\infty}}_{\nu, \bC^{\infty}} $$

As a consequence of Theorem \ref{thrm:SW_almost_equiv}, a connection was established between the notions of duality in the Deligne category $\Dab$ and the duality in the category $\widehat{\co}^{\mathfrak{p}_N}_{\nu, {\bC^N}}$ for $N \in \bZ_+$ (see \cite[Section 7.3]{EA}). The above construction allows us to extend this connection to the case when $N = \infty$. \InnaD{Namely, Theorems \ref{thrm:SW_limit_equivalence} and \ref{thrm:SW_duality_inf_dim}, together with \cite[Section 7.3]{EA}, imply the following statement:}

\begin{proposition}
 Let $N \in \bZ_+ \cup \{ \infty \}$, $\nu \in \bC$. There is an isomorphism of (covariant) functors $$ \widehat{SW}_{\nu, {\bC^N}} \circ ( \cdot )^* \longrightarrow \widehat{( \cdot )}^{\vee}_{N} \circ SW_{\nu, \bC^N}$$
\end{proposition}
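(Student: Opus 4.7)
The case $N \in \bZ_+$ is precisely what is established in \cite[Section 7.3]{EA}, where a natural isomorphism
$$\alpha_n \colon \widehat{SW}_{\nu, \bC^n} \circ (\cdot)^{*} \stackrel{\sim}{\longrightarrow} \widehat{(\cdot)}^{\vee}_n \circ \widehat{SW}_{\nu, \bC^n}$$
of covariant functors $\Dab \to \widehat{\co}^{\mathfrak{p}_n}_{\nu, \bC^n}$ is constructed. The plan is therefore to reduce the case $N = \infty$ to the finite-dimensional case by means of the equivalence $\widehat{\Gamma}_{\text{lim}} \colon \widehat{\co}^{\mathfrak{p}_{\infty}}_{\nu, \bC^{\infty}} \stackrel{\sim}{\to} \varprojlim_{n \geq 1, \text{ restr}} \widehat{\co}^{\mathfrak{p}_n}_{\nu, \bC^n}$ of Proposition \ref{prop:inv_lim_hat_gl_infty_par_cat}.

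First, I would verify that the duality functors $\widehat{(\cdot)}^{\vee}_n$ commute with the restriction functors $\widehat{\mathfrak{Res}}_{n-1,n}$, in the sense that there is a natural isomorphism $\widehat{(\cdot)}^{\vee}_{n-1} \circ \widehat{\mathfrak{Res}}^{op}_{n-1,n} \cong \widehat{\mathfrak{Res}}_{n-1,n} \circ \widehat{(\cdot)}^{\vee}_n$. This is essentially the assertion recorded (for the non-localized categories) just above the definition of $\widehat{(\cdot)}^{\vee}_{\text{lim}}$ in Section \ref{sec:duality}, and follows from the fact that $\widehat{\mathfrak{Res}}_{n-1,n}$ is a ``weight-space restriction'' which intertwines with the transposition map $A \mapsto A^{T}$. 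These compatibilities produce the limit duality $\widehat{(\cdot)}^{\vee}_{\text{lim}}$, and under the equivalence $\widehat{\Gamma}_{\text{lim}}$ this limit duality corresponds to $\widehat{(\cdot)}^{\vee}_{\infty}$ on $\widehat{\co}^{\mathfrak{p}_{\infty}}_{\nu, \bC^{\infty}}$.

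Next, using Corollary \ref{cor:overline_Res_nu_n_compatible}, I would check that the finite-dimensional isomorphisms $\alpha_n$ are compatible with the restriction functors: the two composites
$$\widehat{\mathfrak{Res}}_{n-1,n} \circ \widehat{SW}_{\nu, \bC^n} \circ (\cdot)^{*} \quad \text{and} \quad \widehat{\mathfrak{Res}}_{n-1,n} \circ \widehat{(\cdot)}^{\vee}_n \circ \widehat{SW}_{\nu, \bC^n}$$
both identify canonically with the $(n-1)$-level composite via $\hat{\eta}_n$ (and its dual counterpart), so that $\widehat{\mathfrak{Res}}_{n-1,n}(\alpha_n) = \alpha_{n-1}$ after the identifications. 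This means the family $(\alpha_n)_{n \geq 1}$ defines a morphism of functors valued in the restricted inverse limit, namely a natural isomorphism
$$\alpha_{\text{lim}} \colon \widehat{SW}_{\nu, \text{lim}} \circ (\cdot)^{*} \stackrel{\sim}{\longrightarrow} \widehat{(\cdot)}^{\vee}_{\text{lim}} \circ \widehat{SW}_{\nu, \text{lim}}.$$

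Finally, transferring along the equivalence $\widehat{\Gamma}_{\text{lim}}$ and invoking Lemma \ref{lem:SW_Gamma_compatible} (which identifies $\widehat{SW}_{\nu, \text{lim}}$ with $\widehat{\Gamma}_{\text{lim}} \circ \widehat{SW}_{\nu, \bC^{\infty}}$) yields the desired isomorphism for $N = \infty$. The main obstacle I anticipate is the bookkeeping in the second step: one must write the isomorphisms $\alpha_n$ (as constructed in \cite{EA}) explicitly enough, in terms of the defining maps of the complex tensor power, to see that the commuting squares involving $\widehat{\mathfrak{Res}}_{n-1,n}$, $\widehat{(\cdot)}^{\vee}$ and $(\cdot)^{*}$ in fact commute strictly (or at least up to the already-fixed natural isomorphisms $\hat{\eta}_n$), so that one genuinely obtains a morphism in the restricted inverse limit and not merely a coherent system up to scalar.
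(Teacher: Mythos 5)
Your approach matches the paper's: the paper gives no written proof for this proposition, stating only that it follows from Theorems \ref{thrm:SW_limit_equivalence} and \ref{thrm:SW_duality_inf_dim} together with \cite[Section 7.3]{EA}, and your proposal is exactly the unpacking of that citation — pass the finite-dimensional isomorphism $\alpha_n$ through the restricted inverse limit and transfer it to $N=\infty$ via $\widehat{\Gamma}_{\text{lim}}$. The compatibility $(\cdot)^{\vee}_{n-1}\circ\mathfrak{Res}_{n-1,n}^{op}\cong\mathfrak{Res}_{n-1,n}^{op}\circ(\cdot)^{\vee}_n$ you invoke in your first step is indeed the one the paper records just before defining $\widehat{(\cdot)}^{\vee}_{\text{lim}}$, and Corollary \ref{cor:overline_Res_nu_n_compatible} plus Lemma \ref{lem:SW_Gamma_compatible} supply the other identifications. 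On the scalar/coherence concern you flag in your second step: it is a genuine point to raise, but it is not a gap in the end, since the statement asserts only the \emph{existence} of an isomorphism of functors. One can either normalize the $\alpha_n$ using that they all come from a single explicit pairing on the complex tensor power (so that $\mathfrak{Res}_{n-1,n}(\alpha_n)$ literally equals $\alpha_{n-1}$ under the identifications $\hat\eta_n$), or avoid the system-building entirely by noting that both $\widehat{SW}_{\nu,\bC^\infty}\circ(\cdot)^*$ and $\widehat{(\cdot)}^{\vee}_\infty\circ\widehat{SW}_{\nu,\bC^\infty}$ are exact contravariant equivalences onto $\widehat{\co}^{\mathfrak{p}_\infty}_{\nu,\bC^\infty}$ that agree on simples (checkable after applying $\widehat\Gamma_n$ for $n$ large, by \cite[Section 7.3]{EA}), and such a contravariant duality on a category with enough projectives and a fixed highest-weight structure is unique up to natural isomorphism.
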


\end{document}